\title{Euclidean Quadratic Forms and ADC-Forms: I}
\author{Pete L. Clark}
\thanks{Partially supported by National Science Foundation grant DMS-0701771}
\thanks{2010 \emph{Mathematics Subject Classification}.  Primary 11E08, 11E12, 13F05, 13F07}
\thanks{\emph{Key words and phrases}. Normed ring, Euclidean form, ADC form, regular form.}
\address{Department of Mathematics \\ Boyd Graduate Studies Research Center \\ University 
of Georgia \\ Athens, GA 30602-7403 \\ USA}
\email{pete@math.uga.edu}
\begin{document}

\maketitle

\newtheorem{lemma}{Lemma}
\newtheorem{prop}[lemma]{Proposition}
\newtheorem{cor}[lemma]{Corollary}
\newtheorem{thm}[lemma]{Theorem}
\newtheorem{example}[lemma]{Example}
\newtheorem{thm?}[lemma]{Theorem?}
\newtheorem{schol}[lemma]{Scholium}
\newtheorem{ques}{Question}
\newtheorem{conj}[lemma]{Conjecture}
\newtheorem*{mainthm}{Main Theorem}
\newtheorem{prob}[ques]{Problem}

\newcommand{\Z}{\mathbb{Z}}
\newcommand{\R}{\mathbb{R}}

\renewcommand{\dim}{\operatorname{dim}}
\newcommand{\LC}{\operatorname{LC}}
\renewcommand{\Z}{\mathbb{Z}}
\newcommand{\Q}{\mathbb{Q}}
\renewcommand{\R}{\mathbb{R}}
\newcommand{\C}{\mathbb{C}}
\newcommand{\F}{\mathbb{F}}
\newcommand{\N}{\mathbb{N}}
\newcommand{\ord}{\operatorname{ord}}
\newcommand{\den}{\operatorname{den}}
\newcommand{\ra}{\rightarrow}
\newcommand{\Aut}{\operatorname{Aut}}
\newcommand{\Pic}{\operatorname{Pic}}
\newcommand{\Cl}{\operatorname{Cl}}
\newcommand{\Prin}{\operatorname{Prin}}
\newcommand{\PFrac}{\operatorname{PFrac}}
\newcommand{\Inv}{\operatorname{Inv}}
\newcommand{\Div}{\operatorname{Div}}
\newcommand{\pp}{\mathfrak{p}}
\newcommand{\qq}{\mathfrak{q}}
\renewcommand{\ra}{\rightarrow}
\newcommand{\Spec}{\operatorname{Spec}}
\newcommand{\Frac}{\operatorname{Frac}}
\newcommand{\OO}{\mathcal{O}}
\newcommand{\disc}{\operatorname{disc}}
\newcommand{\SL}{\operatorname{SL}}
\newcommand{\car}{\operatorname{char}}

\noindent
We denote by $\N$ the non-negative integers (including $0$).
\\ \\
Throughout $R$ will denote a commutative, unital integral domain and $K$ its fraction field.  We write $R^{\bullet}$ for $R \setminus \{0\}$ and $\Sigma_R$ for the set of height one primes of $R$. 
\\ \\
If $M$ and $N$ are monoids (written multiplicatively, with identity element $1$), a monoid homomorphism $f: M \ra N$ is \textbf{nondegenerate} if $f(x) = 
1 \iff x = 1$.

\section*{Introduction}
\noindent
The goal of this work is to set up the foundations and begin the systematic 
arithmetic study of certain classes of quadratic forms over a fairly 
general class of integral domains.  Our work here is concentrated around 
that of two definitions, that of \textbf{Euclidean form} and \textbf{ADC form}.
\\ \\
These definitions have a classical flavor, and various special cases of them  can be found (most often implicitly) in the literature.  Our work was particularly motivated by the similarities between two classical 
theorems.  

\begin{thm}(Aubry, Davenport-Cassels)
\label{INTRO1}
Let $A = (a_{ij})$ be a symmetric $n \times n$ matrix with coefficients in $\Z$, 
and let $q(x) = \sum_{1 \leq i,j \leq n} a_{ij} x_i x_j$ be a positive 
definite integral quadratic form.  Suppose that for all $x \in \Q^n$, 
there exists $y \in \Z^n$ such that $q(x-y) < 1$.  Then if $d \in \Z$ 
is such that there exists $x \in \Q^n$ with $q(x) = d$, there exists $y \in \Z^n$ such that $q(y) = d$.
\end{thm}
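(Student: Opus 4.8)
The plan is to carry out the classical ``Davenport--Cassels'' descent on denominators. Write $B(x,y) = \sum_{1 \le i,j \le n} a_{ij} x_i y_j$ for the symmetric bilinear form attached to $A$, so that $q(x) = B(x,x)$ and $q(u+v) = q(u) + 2B(u,v) + q(v)$ for all $u,v \in \Q^n$. Fix $d \in \Z$ such that $q(x) = d$ has a solution $x \in \Q^n$, and among all such solutions choose one whose \emph{denominator} $t$ --- the least positive integer with $tx \in \Z^n$ --- is minimal. It suffices to show $t = 1$. So suppose $t \ge 2$; I will manufacture a rational solution of $q = d$ with strictly smaller denominator, contradicting minimality.

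By hypothesis there is $y \in \Z^n$ with $q(x-y) < 1$. Since $t \ge 2$ we have $x \notin \Z^n$, hence $x \ne y$, and positive definiteness forces $q(x-y) > 0$. The geometric idea is that the line through $x$ and $y$ meets the quadric $\{q = d\}$ in one further rational point $x'$, which will turn out to have smaller denominator. Concretely, seeking $x' = y + \lambda(x-y)$ with $q(x') = d = q(x)$, one is led to the equation $(\lambda - 1)\bigl(2B(y,x-y) + q(x-y)(\lambda + 1)\bigr) = 0$; discarding the root $\lambda = 1$ and using $B(y,x-y) = B(x,y) - q(y)$ together with $q(x-y) = d - 2B(x,y) + q(y)$, the other root simplifies to $\lambda = (q(y) - d)/q(x-y)$, so that
\[
x' = y + \frac{q(y) - d}{q(x-y)}\,(x - y).
\]

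Now the denominator count. Put $w = tx - ty \in \Z^n$, so $x - y = w/t$ and $q(x-y) = q(w)/t^2$. Since $w_i \equiv (tx)_i \pmod{t}$ for each $i$, reducing $q(w) = \sum a_{ij} w_i w_j$ modulo $t$ gives $q(w) \equiv q(tx) = t^2 d \equiv 0 \pmod{t}$; hence $q(x-y) = m/t$ with $m := q(w)/t \in \Z$. From $0 < q(x-y) < 1$ we get $1 \le m \le t-1$; in particular $m \ne 0$, so $x'$ is well defined. Substituting $q(x-y) = m/t$ and $x - y = w/t$ into the formula for $x'$ gives $x' = y + \frac{q(y)-d}{m}\, w$, whence $m x' = my + (q(y)-d)w \in \Z^n$. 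Thus the denominator of $x'$ divides $m \le t-1 < t$, while $q(x') = d$ by construction --- contradicting the minimality of $t$. Therefore $t = 1$ and $x \in \Z^n$ is the required integral representation.

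I expect the one delicate point to be the divisibility $t \mid q(w)$: this is precisely what makes the denominator drop, and it is exactly here that the \emph{strict} inequality $q(x-y) < 1$ is used, since it yields $m < t$ (rather than merely $m \le t$). The remaining ingredients --- that $q(x-y) > 0$ because $x \ne y$ and $q$ is positive definite, and the algebra producing the second intersection point $x'$ --- are routine manipulations with the bilinear form $B$.
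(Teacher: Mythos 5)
Your proof is correct, and it is essentially the same argument the paper uses: the paper proves Theorem \ref{INTRO1} as the special case $R=\Z$ of Theorem \ref{MAINTHM}, whose proof is the same Aubry descent---minimal ``denominator'' $t$ with $t^2d$ represented integrally, the Euclidean hypothesis producing $y$ with $0<q(x-y)<1$, and the secant-line point giving a solution of strictly smaller size. The only difference is packaging: you make the divisibility $t\mid q(w)$ explicit to see the denominator drop, whereas the paper (following Serre) encodes it in the identities $X\cdot X=T^2d$ and $T=t\,q(z)$, which is what lets the argument run over an arbitrary normed ring where the bilinear form need only be half-integral.
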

\noindent
Consider
$q(x) = x_1^2 + x_2^2 + x_3^2$.  It satisfies the hypotheses of 
the theorem: approximating a vector $x \in \Q^3$ by a vector $y \in \Z^3$ 
of nearest integer entries, we get \[(x_1-y_1)^2 + (x_2 - y_2)^2 + (x_3-y_3)^2 \leq \frac{3}{4} < 1 . \]
Thus Theorem \ref{INTRO1} shows that every integer which is the sum of three \emph{rational} squares is also the sum of three \emph{integral} squares.  The Hasse-Minkowski theory makes the rational representation problem routine: $d \in \Q^{\bullet}$ is $\Q$-represented by $q$ iff it is $\R$-represented by $q$ and $\Q_p$-represented by $q$ for all primes $p$. The form
$q$ $\R$-represents the non-negative rational numbers.  For odd $p$, $q$ is smooth over $\Z_p$ and hence isotropic: it $\Q_p$-represents all rational numbers.  Finally, for $a \in \N$ there are no 
primitive $\Z_2$-adic representations of $4^a \cdot 7$, so $q$ does not $\Q_2$-adically represent $7$, whereas the other $7$ classes in 
$\Q_2^{\times}/\Q_2^{\times 2}$ are all $\Q_2$-represented by $q$.  We conclude:
\begin{cor}(Gauss-Legendre Three Squares Theorem)
\label{3SQUARESCOR}
\label{INTRO2}
An integer $n$ is a sum of three integer squares iff $n \geq 0$ and 
$n$ is \emph{not} of the form $4^a (8k+7)$.
\end{cor}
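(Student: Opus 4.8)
The plan is to deduce the corollary from Theorem \ref{INTRO1} together with the Hasse--Minkowski local--global principle, following the chain of reductions sketched above. First I would verify that $q(x) = x_1^2 + x_2^2 + x_3^2$ satisfies the hypothesis of Theorem \ref{INTRO1}: given $x \in \Q^3$, choose $y \in \Z^3$ whose coordinates are nearest integers to those of $x$, so that $|x_i - y_i| \leq \frac{1}{2}$ for each $i$ and hence $q(x - y) \leq \frac{3}{4} < 1$. By Theorem \ref{INTRO1} it then suffices to prove the analogous assertion over $\Q$: a non-negative integer $n$ is a sum of three rational squares if and only if $n$ is not of the form $4^a(8k+7)$. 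In particular, the classically elementary ``only if'' half of the corollary falls out along the way, since such $n$ turn out not to be a sum of three rational squares either.

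For the rational representation problem I would invoke Hasse--Minkowski: $n \in \Q^\bullet$ is represented by $q$ over $\Q$ if and only if it is represented over $\R$ and over $\Q_p$ for every prime $p$. The archimedean condition is simply $n \geq 0$, since $q$ is positive definite and represents every non-negative real. For odd $p$ the form $q$ is smooth over $\Z_p$ (its discriminant is a $p$-adic unit), so its reduction modulo $p$ is a non-degenerate ternary quadratic form over $\F_p$ and hence isotropic; by Hensel's lemma $q$ is isotropic over $\Q_p$, so it is universal there and represents every element of $\Q_p^\times$. Thus the odd primes impose no constraint, and the only possible local obstruction lies at $p = 2$.

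The heart of the matter --- and the step I expect to be the main obstacle --- is the $2$-adic analysis: determining precisely which square classes of $\Q_2^\times/\Q_2^{\times 2}$ are represented by $q$. Writing $n = 4^a m$ with $v_2(m) \in \{0,1\}$, any $\Q_2$-representation of $n$ rescales to a primitive $\Z_2$-adic representation of $m$; a finite case analysis (reducing $m$ modulo $8$, and separately treating the case $v_2(m) = 1$) shows that $m$ admits such a primitive representation unless $m$ is a $2$-adic unit with $m \equiv 7 \pmod{8}$. Equivalently, $q$ fails to $\Q_2$-represent exactly the integers of the form $4^a(8k+7)$ and $\Q_2$-represents everything else. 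Combining this with the archimedean condition and the triviality of the odd-$p$ conditions, a positive integer $n$ is a sum of three rational squares if and only if $n$ is not of the form $4^a(8k+7)$; feeding this equivalence back through Theorem \ref{INTRO1} yields the corollary.
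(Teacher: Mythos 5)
Your proposal is correct and follows essentially the same route as the paper: verify the hypothesis of Theorem \ref{INTRO1} by nearest-integer approximation (giving $q(x-y) \leq \frac{3}{4} < 1$), then settle the rational representation problem via Hasse--Minkowski, with the real place giving $n \geq 0$, the odd primes imposing no condition by smoothness/isotropy over $\Z_p$, and the $2$-adic analysis excluding exactly the square class of $7$, i.e.\ the integers $4^a(8k+7)$. The level of detail at the prime $2$ matches the paper's own brief treatment, so there is nothing further to add.
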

\noindent
One may similarly derive Fermat's Theorem on sums of two integer squares.   The argument \emph{does not} directly apply to sums of 
four or more squares since the hypothesis is not satisfied: if $q_n(x) = x_1^2 + \ldots + x_n^2$ and we take $x = (\frac{1}{2},\ldots,\frac{1}{2})$, the best 
we can do is to take $y$ to have all coordinates either $0$ or $1$ which gives 
$q(x-y) = \frac{n}{4}$.\footnote{On the other hand, one can easily 
deduce Lagrange's Four Squares Theorem from the Three Squares Theorem and 
Euler's Four Squares Identity.}  \\ \indent This proof of Corollary \ref{3SQUARESCOR} is essentially due to L. Aubry \cite{Aubry}, but was long forgotten until it was rediscovered by Davenport 
and Cassels in the 1960s.  They did not publish their result, but J.-P. Serre 
included it in his influential text \cite{Serre}, and it is by now quite widely known. 
\\ \\
On the other hand there are the following results.
\begin{thm}(Pfister \cite{Pfister})
\label{INTRO3}
Let $F$ be a field, $\car(F) \neq 2$, let $q(x)$ 
be a quadratic form over $F$, and view it by base extension as a quadratic 
form over the polynomial ring $F[t]$.  Suppose that for $d \in F[t]$, 
there exists $x = (x_1,\ldots,x_n) \in F(t)^n$ such that $q(x) = d$.  Then 
there exists $y = (y_1,\ldots,y_n) \in F[t]^n$ such that $q(y) = d$.
\end{thm}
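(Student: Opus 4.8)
\noindent
I would prove this as a version of the \emph{Cassels--Pfister theorem}, in close analogy with Theorem \ref{INTRO1}, replacing the Euclidean domain $(\Z,|\cdot|)$ by the Euclidean domain $(F[t],\deg)$: the content will be that the base-extended form $q_{F[t]}$ admits an Aubry--Davenport--Cassels descent with respect to the degree function. Write $b$ for the polar form of $q$, so $q(u+v) = q(u) + b(u,v) + q(v)$; the hypothesis $\car F \neq 2$ lets one pass freely between $q$ and its symmetric Gram matrix, though the argument is essentially characteristic-free. The plan has two parts: first dispose of the case in which $q$ becomes isotropic over $F(t)$, then run an induction on the degree of a common denominator of a rational representing vector.

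\noindent
For the isotropic case I would argue: if $q(\mathbf{g}) = 0$ for some $\mathbf{g}\in F[t]^n\setminus\{0\}$, comparing top-degree coefficients (if $d = \max_i \deg g_i$, the coefficient of $t^{2d}$ in $q(\mathbf{g})$ is $q$ applied to the nonzero vector of degree-$d$ coefficients of the $g_i$) shows $q$ is already isotropic over $F$; then, assuming $q$ regular, Witt decomposition gives $q \cong \langle 1,-1\rangle \perp q_0$ over $F$, hence over $F[t]$, and since $\langle 1,-1\rangle$ represents every $f\in F[t]$ via $\big(\frac{f+1}{2}\big)^2 - \big(\frac{f-1}{2}\big)^2 = f$, the form $q$ is universal over $F[t]$. (An isotropic vector in the radical of $q$ is removed by an obvious rank reduction.)

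\noindent
For the main case I would assume $q$ anisotropic over $F(t)$ and suppose $q(\mathbf{x}) = f$ with $\mathbf{x}\in F(t)^n$; clear denominators to write $\mathbf{x} = \mathbf{g}/h$ with $\mathbf{g}\in F[t]^n$, $h\in F[t]\setminus\{0\}$, so $q(\mathbf{g}) = h^2 f$, and induct on $\deg h$ (the case $\deg h = 0$ being trivial). Given $\deg h \geq 1$, Euclidean division gives $g_i = h z_i + r_i$ with $\deg r_i < \deg h$; set $\mathbf{z} = (z_i)$ and $\mathbf{r} = \mathbf{g} - h\mathbf{z}$ in $F[t]^n$. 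If $\mathbf{r} = 0$ or $q(\mathbf{z}) = f$ we are done at once; otherwise I intersect the quadric $q = f$ with the line $s\mapsto \mathbf{z} + s(\mathbf{x}-\mathbf{z})$: one root of the quadratic $q(\mathbf{x}-\mathbf{z})\,s^2 + b(\mathbf{z},\mathbf{x}-\mathbf{z})\,s + (q(\mathbf{z})-f)$ is $s=1$, so the other is $s_1 = (q(\mathbf{z})-f)/q(\mathbf{x}-\mathbf{z})$, and $\mathbf{x}' := \mathbf{z} + s_1(\mathbf{x}-\mathbf{z})$ again satisfies $q(\mathbf{x}') = f$ (here $q(\mathbf{x}-\mathbf{z}) = q(\mathbf{r})/h^2 \neq 0$ by anisotropy). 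The key computation is that, using $q(\mathbf{g}) = h^2 f$,
\[
q(\mathbf{r}) = q(\mathbf{g}) - h\,b(\mathbf{g},\mathbf{z}) + h^2 q(\mathbf{z}) = h\big(hf - b(\mathbf{g},\mathbf{z}) + h q(\mathbf{z})\big),
\]
so $h \mid q(\mathbf{r})$; writing $q(\mathbf{r}) = h h'$ one gets $\mathbf{x}' = \mathbf{z} + \frac{q(\mathbf{z})-f}{h'}\,\mathbf{r}$, whose common denominator divides $h'$, and $\deg h' = \deg q(\mathbf{r}) - \deg h \leq 2(\deg h - 1) - \deg h = \deg h - 2 < \deg h$. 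So the induction closes.

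\noindent
The hard part will be the last paragraph — checking that the ``second intersection point'' $\mathbf{x}'$ really has denominator of strictly smaller degree. This rests on two things fitting together: Euclidean division in $F[t]$ keeps the remainder $\mathbf{r}$ of degree $<\deg h$, and $h$ automatically divides $q(\mathbf{r})$, which together force $\deg h'$ down by a full factor rather than leaving it unchanged. The one subtlety worth flagging is that this descent silently assumes $q$ is anisotropic over $F(t)$ (else $q(\mathbf{x}-\mathbf{z})$, or the denominator $h'$, could vanish), which is precisely why the isotropic case must be peeled off first and settled by the softer argument. In the language to be developed, this exhibits $q_{F[t]}$ as a Euclidean form, hence an ADC form.
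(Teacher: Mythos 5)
Your proposal is correct and is essentially the argument the paper itself uses: the isotropic case is peeled off exactly as in Theorem \ref{GENCPTHM} (Lemma \ref{ANISOTROPYLEMMA} plus the hyperbolic-plane universality of Example 2.6), and your anisotropic-case descent --- Euclidean division giving $\deg r_i < \deg h$, then passing to the second intersection point after noting $h \mid q(\mathbf{r})$ --- is precisely the proof of Theorem \ref{MAINTHM} specialized to $(F[t],|\cdot|_2)$, with the division step playing the role of the Euclideanness verification in equation (\ref{CPEQN}). You have merely inlined the general Euclidean-implies-ADC descent with explicit degree bookkeeping instead of quoting it abstractly, so the two proofs coincide in substance.
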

\noindent
\begin{cor}(Cassels \cite{Cassels})
\label{INTRO4}
Fix $n \in \Z^{> 0}$.  A polynomial $d \in F[t]$ is a sum of squares of 
$n$ rational functions iff it is a sum of squares of $n$ polynomials.
\end{cor}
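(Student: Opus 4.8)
The plan is to specialize Theorem \ref{INTRO3} to the quadratic form $q_n(x) = x_1^2 + \cdots + x_n^2$, regarded first as a form over $F$ (where, as in Theorem \ref{INTRO3}, we are assuming $\car F \neq 2$) and then base-extended to the polynomial ring $R = F[t]$, whose fraction field is $K = F(t)$. Unwinding the definitions, to say that a polynomial $d \in F[t]$ is a sum of squares of $n$ rational functions is precisely to say that there exists $x \in F(t)^n$ with $q_n(x) = d$; to say that $d$ is a sum of squares of $n$ polynomials is precisely to say that there exists $y \in F[t]^n$ with $q_n(y) = d$. So the two assertions of the corollary are, respectively, ``$q_n$ represents $d$ over $F(t)$'' and ``$q_n$ represents $d$ over $F[t]$''.

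One implication is immediate: a representation $d = \sum_{i=1}^n y_i^2$ with all $y_i \in F[t]$ is in particular a representation with $y_i \in F(t)$, so every sum of $n$ polynomial squares is a sum of $n$ squares of rational functions. For the converse I would argue as follows: suppose $d = \sum_{i=1}^n (p_i/q_i)^2$ with $p_i, q_i \in F[t]$ and $q_i \neq 0$; then $x = (p_1/q_1, \ldots, p_n/q_n) \in F(t)^n$ satisfies $q_n(x) = d$, so Theorem \ref{INTRO3} (applied with $q = q_n$) yields $y = (y_1, \ldots, y_n) \in F[t]^n$ with $q_n(y) = d$, i.e. $d = y_1^2 + \cdots + y_n^2$ is a sum of $n$ polynomial squares.

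Since all the mathematical substance has been placed in Theorem \ref{INTRO3}, there is essentially no obstacle remaining: the corollary is a pure translation, exactly parallel to the deduction of the Three Squares Theorem from Theorem \ref{INTRO1} (but now without any local--global input, because the rational-function representability is supplied as a hypothesis rather than something to be checked). The only point meriting a word of care is the characteristic-$2$ situation, which Theorem \ref{INTRO3} excludes: there $q_n = (x_1 + \cdots + x_n)^2$ is a perfect square, a polynomial that is a sum of $n$ squares of rational functions is a square in $F(t)$, hence --- as $F[t]$ is integrally closed in $F(t)$ --- a square $g^2$ in $F[t]$, and $g^2 = g^2 + 0^2 + \cdots + 0^2$ displays it as a sum of $n$ polynomial squares. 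As the corollary is stated under the standing hypothesis $\car F \neq 2$ of Theorem \ref{INTRO3}, this case need not even be separately invoked.
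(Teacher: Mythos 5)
Your proposal is correct and is exactly the intended derivation: the corollary is the specialization of Theorem \ref{INTRO3} to the form $q_n(x)=x_1^2+\cdots+x_n^2$ over $F[t]$, with the reverse implication being trivial; the paper supplies no separate argument precisely because this is immediate. Your aside on characteristic $2$ (where the statement is trivially true since $F[t]$ is integrally closed in $F(t)$) is a harmless extra and, as you note, not needed under the standing hypothesis $\car(F)\neq 2$.
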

\noindent
Theorems \ref{INTRO1} and \ref{INTRO3} each concern certain quadratic forms $q$ over a domain $R$ with fraction field $K$, and the common conclusion is that for all $d \in R$, $q$ $R$-represents $d$ iff it $K$-represents $d$.  This is a natural and useful property for a quadratic form $R$ over an integral domain to have, and we call such a form an \textbf{ADC form}.
\\ \\
The relationship between the hypotheses of the Aubry-Davenport-Cassels and Cassels-Pfister theorems is not as immediate.  In the former theorem, 
the hypothesis on $q$ is reminiscent of the Euclidean algorithm.  To generalize 
this to quadratic forms over an arbitrary domain we need some notion of the size of $q(x-y)$.  We do this by introducing the notion of a  \textbf{norm function} $| \cdot |: R \ra \N$ on an integral domain.  Then we define an anisotropic quadratic form $q(x) = q(x_1,\ldots,x_n)$ over $(R,|\cdot|)$ to be \textbf{Euclidean with respect to the norm} if for all $x \in K^n$, there exists 
$y \in R^n$ such that $|q(x-y)| < 1$.  We justify this notion by carrying over the proof of the Aubry-Davenport-Cassels theorem to this context: we show that for any normed ring $(R,| \cdot |)$, a Euclidean 
quadratic form $q_{/R}$ is an ADC form.  This suggests a strategy of proof 
of the Cassels-Pfister theorem: first, find a natural norm on 
the domain $R = F[t]$, and second show that any ``constant'' quadratic 
form over $R$ is Euclidean with respect to this norm.  This strategy is carried 
out in Section 2.5; in fact we get a somewhat more general (but still known) result.
\\ \\
After establishing that every Euclidean form is an ADC form, a natural followup is to identify all Euclidean forms and ADC forms over normed rings 
of arithmetic interest, especially complete discrete valuation rings (CDVRs) and 
Hasse domains: i.e., $S$-integer rings in global fields.  This is a substantial project that is begun but not completed here.  In fact much of this paper 
is foundational: we do enough work to convince the reader (or so I hope) that 
Euclidean and ADC forms lead not just to a generalization of parts of the arithmetic theory of quadratic forms to a larger class of rings, but that these 
notions are interesting and useful even (especially?) when applied to the most 
classical cases.
\\ \\
The structure of the paper is as follows: $\S 1$ lays some groundwork
regarding normed domains.  This is a topic lying at the border of commutative 
algebra and number theory, and it is not really novel: it occurs for instance 
in \cite{Lenstra} (a work with profound connections to the present subject -- 
so much so that we have chosen to leave them to a future paper), not to mention the expository work \cite{Clark-Factorization} in which the theory of factorization in integral domains is ``remade'' with norm functions playing an 
appropiately large role.  But to the best of my knowledge this theory has 
never been given a systematic exposition.  This includes the present work: 
we began with a significantly longer treatment and pared it down to include 
only those results which actually get applied to the arithmetic of quadratic 
forms.  (In particular, in an effort to convince the reader that we are doing 
number theory and not just commutative algebra, we have excised all references to Krull domains, which in fact provide a natural interpolation between UFDs and 
Dedekind domains.)  $\S 2$ introduces Euclidean quadratic forms and ADC forms and proves the main theorem advertised above: that Euclidean implies ADC.  
In $\S 3$ we prove some results on the effect of localization 
and completion on Euclideanness and the ADC property.  These results may 
not seem very exciting, but the relative straightforwardness of the proofs 
is a dividend paid by our foundational results on normed domains.  Moreover, 
they are absolutely crucial in $\S 4$ of the paper, where we completely 
dispose of Euclidean forms over a CDVR and then move to an analysis of 
Euclidean and ADC forms over Hasse domains and in particular over $\Z$ and 
$\F[t]$.  The reader who skips lightly through the rest to get to this material will be 
forgiven in advance. 
\\ \\
\textbf{Acknowlegements:} It is a pleasure to thank F. Lemmermeyer, J.P. Hanke, D. Krashen and W.C. Jagy, who each contributed valuable insights.  

\section{Normed Rings}
\subsection{Elementwise Norms} \textbf{} \\ \\ \noindent 
A \textbf{norm} on a ring $R$ is a function $|\cdot |: R \ra \N$ such that \\
(N0) $|x| = 0 \iff x = 0$, \\
(N1) $\forall x,y \in R$, $|xy| = |x||y|$, and \\
(N2) $\forall x \in R$, $|x| = 1 \iff x \in 
R^{\times}$. 
\\ \\
A \textbf{normed ring} is a pair $(R,| \cdot |)$ where $| \cdot |$ is a norm on $R$.  A ring admitting a norm is necessarily an integral domain.  We denote the fraction field by $K$. 
\\ \\
Let $R$ be a domain with fraction field $K$.  We say that two norms $| \cdot |_1, \cdot | \cdot |_2$ on $R$ are \textbf{equivalent} -- and write $| \cdot |_1 \sim | \cdot |_2$ if for all $x \in K$, $|x|_1 < 1 \iff |x|_2 < 1$.  
\\ \\
Remark: Let $(R,| \cdot |)$ be a normed domain with fraction field $K$.  By (N1) and (N2), $| \cdot |: (R^{\bullet},\cdot) \ra (\Z^+,\cdot)$ is a 
homomorphism of commutative monoids.  It therefore extends uniquely to a homomorphism on the group completions, i.e., $| \cdot |: K^{\times} \ra \Q^{>0}$ 
via $|\frac{x}{y}| = \frac{|x|}{|y|}$.  This map factors through the \textbf{group of divisibility} $G(R) = K^{\times}/R^{\times}$ to give a 
map $K^{\times}/R^{\times} \ra \Q^{>0}$, which need not be injective.
\\ \\
Example 1.1: The usual absolute value $| \cdot |_{\infty}$ on $\Z$ (inherited from $\R$) is a norm.
\\ \\
%
\noindent
Example 1.2: Let $k$ be a field, $R = k[t]$, and let $a \geq 2$ be an integer.  Then the map $f \in k[t]^{\bullet} \mapsto a^{\deg f}$ is a non-Archimedean norm $|\cdot |_a$ on $R$ and the norms obtained for various choices of $a$ are equivalent.  As we shall see, when $k$ is finite, the most natural normalization 
is $a = \# k$.  Otherwise, we may as well take $a = 2$.  
\\ \\
Example 1.3: Let $R$ be a discrete valuation ring (DVR) with valuation 
$v: K^{\times} \ra \Z$ and residue field $k$.  For any integer $a \geq 2$, we may define a norm on $R$, $| \cdot |_a: R^{\bullet} \ra \Z^{> 0}$ by $x \mapsto  a^{v(x)}$.  (Note that these are the \emph{reciprocals} of the norms $x \mapsto a^{-v(x)}$ attached to $R$ in valuation theory.)  Using the fact that 
$G(R) = K^{\times}/R^{\times} \cong (\Z,+)$ one sees that these are all 
the norms on $R$.  That is, a DVR admits a unique norm up to equivalence. 
\\ \\
Example 1.4: Let $R$ be a UFD.  Then $\Prin(R)$ is a free commutative monoid 
on the set $\Sigma_R$ of height one primes of $R$ \cite[VII.3.2]{Bourbaki}.  Thus, to give a norm map on $R$ it is necessary and sufficient to map each prime element $\pi$ to an integer $n_{\pi} \geq 2$ in such a way that if $(\pi) = (\pi')$, $n_{\pi} = n_{\pi'}$.

\subsection{Ideal norms} \textbf{} \\ \\ \noindent
For a domain $R$, let $\mathcal{I}^+(R)$ be the monoid of nonzero ideals of $R$ 
under multiplication and $\mathcal{I}(R)$ be the monoid of nonzero fractional $R$-ideals under multiplication.  
\\ \\
An \textbf{ideal norm} on $R$ is a nondegenerate homomorphism of monoids $|\cdot |: \mathcal{I}^+(R) \ra (\Z^{>0},\cdot)$.  We extend the norm to the zero ideal by 
putting $|(0)| = 0$.  In plainer language, to each nonzero ideal $I$ we assign 
a positive integer $|I|$, such that $|I| = 1 \iff I= R$ and $|IJ| = |I||J|$ 
for all ideals $I$ and $J$.

\subsection{Finite Quotient Domains} \textbf{} \\ \\ \noindent
A commutative ring $R$ has the property of \textbf{finite quotients} (FQ) if for all nonzero ideals $I$ of $R$, the ring $R/I$ is finite \cite{Butts-Wade}, \cite{Chew-Lawn}, \cite{Levitz-Mott}. \\ \indent Obviously any finite ring satisfies (FQ).  On the other hand, it can 
be shown that any infinite ring satisfying property (FQ) is necessarily a 
domain.  We define an \textbf{finite quotient domain} to be an infinite integral 
domain satisfying (FQ) which is not a field.  A finite quotient domain is a 
Noetherian domain of Krull dimension one, hence it is a Dedekind domain iff 
it is integrally closed. 
\\ \\
Example 1.5: The rings $\Z$ and $\F_p[t]$ are finite quotient domains.  From these many other examples may be derived using 
the following result.

\begin{prop}
Let $R$ be a finite quotient domain with fraction field $K$. \\
a) Let $L/K$ be a finite extension, and let $S$ be a ring with $R \subset S \subset L$.  Then, if not a field, $S$ is a finite quotient domain.  \\
b) The integral closure $\tilde{R}$ of $R$ in $K$ is a finite quotient domain.  \\
c) The completion of $R$ at a maximal ideal is a finite 
quotient domain. 
\end{prop}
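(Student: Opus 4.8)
The plan is to derive (b) as a special case of (a) and to treat (c) separately, using throughout the facts noted above that a finite quotient domain is a one-dimensional Noetherian domain with $R/I$ finite for every nonzero ideal $I$. For (a): a subring $S$ with $R \subseteq S \subseteq L$ is an integral domain containing the infinite ring $R$, and by hypothesis it is not a field, so only property (FQ) needs checking. First I would show that every nonzero ideal $J$ of $S$ meets $R$ nontrivially: given $0 \neq \alpha \in J$, clearing denominators in the minimal polynomial of $\alpha$ over $K$ gives a relation $r_0 + r_1\alpha + \cdots + r_d\alpha^d = 0$ with $r_i \in R$ and $r_0 \neq 0$ (the constant term is nonzero since $\alpha \neq 0$ and $L$ is a domain), so $r_0 = -\alpha(r_1 + \cdots + r_d\alpha^{d-1}) \in \alpha S \subseteq J$ and $\mathfrak{a} := J \cap R \neq (0)$. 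Then I would invoke the Krull--Akizuki theorem in the form: since $R$ is a one-dimensional Noetherian domain with fraction field $K$ and $R \subseteq S \subseteq L$ with $L/K$ finite, $S/J$ is a finitely generated $R$-module for every nonzero ideal $J$ of $S$. As $S/J$ is killed by $\mathfrak{a}$, it is a finitely generated module over the finite ring $R/\mathfrak{a}$, hence finite. (Equivalently: for $0 \neq a \in \mathfrak{a}$, Krull--Akizuki makes $S/aS$ a finitely generated $R$-module, hence finite since $R/aR$ is, and $S/J$ is a quotient of $S/aS$.) This is the technical core; the only delicate point is that one needs the ``$S/J$ is $R$-finite'' conclusion of Krull--Akizuki, not merely ``$S$ is Noetherian.''

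For (b): apply (a) with $L = K$ and $S = \tilde R$, the integral closure of $R$ in $K$; it remains only to see that $\tilde R$ is not a field. If it were, then, containing $R$, it would contain $\Frac(R) = K$; together with $\tilde R \subseteq K$ this forces $\tilde R = K$, so every element of $K$ is integral over $R$. But for a nonzero non-unit $a \in R$ (present since $R$ is not a field), multiplying an integral equation for $a^{-1}$ over $R$ by $a^{d-1}$ exhibits $a^{-1} \in R$ --- absurd. Hence (a) applies. (Should the inclusions in (a) be read as strict, the case $\tilde R = R$ needs nothing.)

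For (c): let $\mathfrak m$ be a maximal ideal of $R$ and $\hat R = \varprojlim_n R/\mathfrak m^n$. Each $\mathfrak m^n$ is a nonzero ideal of the domain $R$, so $R/\mathfrak m^n$ is finite; since $\mathfrak m^n/\mathfrak m^{n+1} \neq 0$ for all $n$ by Nakayama, the orders $|R/\mathfrak m^n|$ grow without bound, so $\hat R$ is infinite. Moreover $\hat R$ is a complete Noetherian local ring with maximal ideal $\mathfrak m\hat R$, finite residue field $R/\mathfrak m$, $\hat R/\mathfrak m^n\hat R \cong R/\mathfrak m^n$ finite for all $n$, and $\dim \hat R = \dim R_{\mathfrak m} = 1$, so it is not a field. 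Granting that $\hat R$ is a domain, (FQ) follows easily: for $0 \neq x \in \hat R$ the ring $\hat R/x\hat R$ is Noetherian local of dimension $0$ (as $\hat R$ is a one-dimensional domain), hence Artinian, so $\mathfrak m^N\hat R \subseteq x\hat R$ for some $N$; thus any nonzero ideal $J$ of $\hat R$ contains some $\mathfrak m^N\hat R$, and $\hat R/J$ is a quotient of the finite ring $\hat R/\mathfrak m^N\hat R$, hence finite. I expect the real obstacle to be exactly the claim that $\hat R$ is a domain --- i.e. that $R_{\mathfrak m}$ is analytically irreducible. This is immediate when $R$ is integrally closed, since then $R_{\mathfrak m}$ is a DVR and $\hat R$ a complete DVR, and more generally it follows from excellence of $R$, which covers all the cases of interest here ($R = \Z$, $R = \F_p[t]$, and Hasse domains).
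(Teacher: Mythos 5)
Parts a) and b) of your proposal are correct, and they take a more self-contained route than the paper, whose proof of a) is just the citation [Levitz--Mott, Thm.~2.3]: your reduction (every nonzero ideal $J$ of $S$ meets $R$, via the constant term of a cleared minimal polynomial) combined with the strong form of Krull--Akizuki ($S/J$ finitely generated, indeed of finite length, as an $R$-module) is exactly the right mechanism, and b) then follows from a) just as in the paper, with your verification that $\tilde R$ is not a field being a correct extra detail.

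The gap you flag in c) --- that $\hat R$ must be a domain --- is genuine, and your proposed patch does not close it: excellence of $R$ only guarantees that $\hat R$ is \emph{reduced}, not irreducible. Concretely, take $R=\F_p[x,y]/(y^2-x^2-x^3)$ with $p$ odd. This is an excellent finite quotient domain (a one-dimensional affine $\F_p$-domain, so every quotient by a nonzero ideal is a zero-dimensional finitely generated $\F_p$-algebra, hence finite), yet at $\mathfrak{m}=(x,y)$ one has $\hat R\cong \F_p[[x,y]]/\bigl((y-xu)(y+xu)\bigr)$, where $u=\sqrt{1+x}\in\F_p[[x]]$ exists by Hensel's lemma; this ring is not a domain, and moreover $\hat R/(y-xu)\cong \F_p[[x]]$ is infinite, so it is not even residually finite. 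Thus analytic irreducibility of $R_{\mathfrak m}$ is a genuine additional input, not a consequence of (FQ) plus excellence, and your argument proves c) only under that extra hypothesis --- e.g. when $R_{\mathfrak m}$ is a DVR, which does cover all the cases the paper actually uses ($\Z$, $\F_p[t]$, Hasse domains and their localizations), as in the first case you mention. For comparison, the paper gives no argument to measure against here: c) is deduced from a) together with the citation [Chew--Lawn, Cor.~5.3], and the precise hypotheses under which that corollary applies are exactly where the issue you identified is hiding.
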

\begin{proof} Part a) is \cite[Thm. 2.3]{Levitz-Mott}.  In particular, it follows from part a) that $\tilde{R}$ is a finite quotient domain.  That $\tilde{R}$ is a Dedekind ring is part of the Krull-Akizuki Theorem.  Part c) 
follows immediately from part a) and \cite[Cor. 5.3]{Chew-Lawn}.
\end{proof} 
\noindent
Let $R$ be a finite quotient domain.  For a nonzero ideal $I$ of $R$, we define 
$|I| = \# R/I$.  It is natural to ask 
whether $I \mapsto |I|$ gives an ideal norm on $R$.  

\begin{prop}
\label{2.3}
Let $I$ and $J$ be nonzero ideals of the finite quotient domain $R$.  \\
a) If $I$ and $J$ are comaximal -- i.e., $I + J = R$ -- then $|IJ| = |I||J|$.  \\
b) If $I$ is invertible, then $|IJ| = |I||J|$.  \\
c) The map $I \mapsto |I|$ is an ideal norm on $R$ iff $R$ is integrally closed.
\end{prop}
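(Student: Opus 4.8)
The plan is to treat the three parts in order, with part (c) resting on part (b). Throughout I use freely that every finite-index ideal of $R$ has finite residue ring (the defining property of a finite quotient domain) and that $R$ is Noetherian of dimension one.

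For (a) I would simply invoke the Chinese Remainder Theorem: when $I+J=R$ one has $IJ = I\cap J$ and $R/(I\cap J)\cong R/I\times R/J$, so comparing cardinalities of these (finite) rings gives $|IJ| = |I\cap J| = |I|\,|J|$.

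For (b) the idea is to localize. For any nonzero ideal $\mathfrak{a}$ the finite ring $R/\mathfrak{a}$ is Artinian, hence is the product of its localizations at the finitely many maximal ideals containing $\mathfrak{a}$, so $|\mathfrak{a}| = \prod_{\mathfrak{m}}\#(R_{\mathfrak{m}}/\mathfrak{a}R_{\mathfrak{m}})$, the product over all maximal ideals $\mathfrak{m}$. Since localization is multiplicative on products of ideals, it suffices to show $\#(R_{\mathfrak{m}}/I_{\mathfrak{m}}J_{\mathfrak{m}}) = \#(R_{\mathfrak{m}}/I_{\mathfrak{m}})\cdot\#(R_{\mathfrak{m}}/J_{\mathfrak{m}})$ for each $\mathfrak{m}$. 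Here $I$ invertible forces $I_{\mathfrak{m}} = aR_{\mathfrak{m}}$ principal with $a\neq 0$, so (as $R_{\mathfrak{m}}$ is a domain) multiplication by $a$ gives an $R_{\mathfrak{m}}$-module isomorphism $R_{\mathfrak{m}}/J_{\mathfrak{m}}\cong aR_{\mathfrak{m}}/aJ_{\mathfrak{m}} = I_{\mathfrak{m}}/I_{\mathfrak{m}}J_{\mathfrak{m}}$; combining this with the filtration $I_{\mathfrak{m}}J_{\mathfrak{m}}\subseteq I_{\mathfrak{m}}\subseteq R_{\mathfrak{m}}$ yields $\#(R_{\mathfrak{m}}/I_{\mathfrak{m}}J_{\mathfrak{m}}) = \#(R_{\mathfrak{m}}/I_{\mathfrak{m}})\cdot\#(I_{\mathfrak{m}}/I_{\mathfrak{m}}J_{\mathfrak{m}}) = \#(R_{\mathfrak{m}}/I_{\mathfrak{m}})\cdot\#(R_{\mathfrak{m}}/J_{\mathfrak{m}})$. (Alternatively, one can argue globally: $I$ invertible is flat, so tensoring $0\to J\to R\to R/J\to 0$ with $I$ gives $I/IJ\cong I\otimes_R R/J$, which is locally free of rank one over the Artinian ring $R/J$, hence free of rank one, so $\#(I/IJ)=|J|$; the filtration $IJ\subseteq I\subseteq R$ then finishes it.)

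For (c), nondegeneracy is automatic since $|I|=1\iff \#R/I = 1\iff I=R$, so the only content is multiplicativity. If $R$ is integrally closed it is a Dedekind domain, every nonzero ideal is invertible, and (b) applies. For the converse I would argue the contrapositive: if $R$ is not integrally closed then, since normalization commutes with localization, some $R_{\mathfrak{p}}$ fails to be integrally closed; being a one-dimensional Noetherian local domain, $R_{\mathfrak{p}}$ is then not a DVR, so its maximal ideal is not principal and (Nakayama) $d := \dim_{R/\mathfrak{p}}\mathfrak{p}/\mathfrak{p}^2 \geq 2$, where I use that $\mathfrak{p}/\mathfrak{p}^2$, being killed by $\mathfrak{p}$, coincides with $\mathfrak{p}R_{\mathfrak{p}}/\mathfrak{p}^2R_{\mathfrak{p}}$ as a vector space over the finite field $k := R/\mathfrak{p}$ (which has $\#k\geq 2$). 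Taking $I=J=\mathfrak{p}$, the filtration $\mathfrak{p}^2\subseteq\mathfrak{p}\subseteq R$ gives $|\mathfrak{p}^2| = \#(R/\mathfrak{p})\cdot\#(\mathfrak{p}/\mathfrak{p}^2) = (\#k)^{1+d} > (\#k)^2 = |\mathfrak{p}|^2$, so $|\cdot|$ is not multiplicative and hence not an ideal norm.

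The step I expect to be the main obstacle is the converse in (c): producing an explicit witness to the failure of multiplicativity forces one to detect non-integral-closedness at a single localization and then invoke the structure of one-dimensional Noetherian local domains (regular $\iff$ DVR $\iff$ integrally closed, equivalently the maximal ideal requires at least two generators). Parts (a) and (b) are comparatively routine bookkeeping with the finite quotient hypothesis.
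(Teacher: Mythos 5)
Your parts a) and b) follow the paper's route almost exactly: a) is the Chinese Remainder Theorem, and b) first reduces to the local case -- the paper via primary decomposition plus CRT, you via the Artinian decomposition of the finite ring $R/\mathfrak{a}$, which is the same reduction in different clothing -- and then uses that an invertible ideal is locally principal, so that the multiplication-by-$a$ isomorphism $R_\mathfrak{m}/J_\mathfrak{m} \cong I_\mathfrak{m}/I_\mathfrak{m}J_\mathfrak{m}$ together with the filtration $I_\mathfrak{m}J_\mathfrak{m} \subseteq I_\mathfrak{m} \subseteq R_\mathfrak{m}$ gives multiplicativity; this is precisely the paper's short exact sequence argument. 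The genuine divergence is in c): the forward direction is the same in both (integrally closed plus Noetherian of dimension one gives Dedekind, so every ideal is invertible and b) applies), but for the converse the paper simply cites Butts--Wade, Theorem 2, whereas you prove it directly: localize at a prime $\mathfrak{p}$ where integral closedness fails, use that a one-dimensional Noetherian local domain which is not a DVR has non-principal maximal ideal, so $d = \dim_k \mathfrak{p}R_\mathfrak{p}/\mathfrak{p}^2R_\mathfrak{p} \geq 2$, identify this with $\dim_k \mathfrak{p}/\mathfrak{p}^2$ (legitimate, since a module killed by the maximal ideal $\mathfrak{p}$ is unchanged by localizing at $\mathfrak{p}$), and exhibit the failure $|\mathfrak{p}^2| = (\# k)^{1+d} > (\# k)^2 = |\mathfrak{p}|^2$. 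This witness argument is correct -- all the groups involved are subquotients of the finite ring $R/\mathfrak{p}^2$, and $\# k \geq 2$ -- and it buys self-containedness and an explicit certificate of non-multiplicativity, at the cost of a bit of commutative algebra (normalization commutes with localization, and integrally closed equals DVR for one-dimensional Noetherian local domains) that the paper outsources to the reference.
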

\begin{proof} Part a) follows immediately from the Chinese Remainder Theorem.  As for part b), we claim that the norm can be computed locally: for 
each $\mathfrak{p} \in \Sigma_R$, let $|I|_{\mathfrak{p}}$ be the norm of 
the ideal $I R_{\mathfrak{p}}$ in the local finite norm domain $R_{\mathfrak{p}}$.   Then 
\[|I| = \prod_{\mathfrak{p}} |I|_{\mathfrak{p}}. \]
To see this, let $I = \bigcap_{i=1}^n \mathfrak{q}_i$ 
be a primary decomposition of $I$, with $\mathfrak{p}_i = \operatorname{rad}(\mathfrak{q}_i)$.  It follows that
$\{\mathfrak{q}_1,\ldots,\mathfrak{q}_n \}$ is a finite set of pairwise 
comaximal ideals, so the Chinese Remainder Theorem applies to give 
\[R/I \cong \prod_{i=1}^n R/\mathfrak{q}_i. \]
Since $R/\mathfrak{q}_i$ is a local ring with maximal ideal corresponding to 
$\mathfrak{p}_i$, it follows that $|\mathfrak{q}_i| = |\mathfrak{q}_i R_{\mathfrak{p}_i}|$, establishing the claim.
\\ 
Using the claim reduces us to the local case, so that we may assume the ideal $I = (x R)$ is principal.  In this case the short exact sequence of $R$-modules
\[0 \ra \frac{xR}{xJ} \ra  \frac{R}{xJ} \ra \frac{R}{(x)J} \ra 0 \]
together with the isomorphism 
\[\frac{R}{J} \stackrel{\cdot x}{\ra} \frac{xR}{xJ} \]
does the job.  \\
c) If $R$ is 
integrally closed (hence Dedekind), every ideal is invertible so 
this is an ideal norm.  The converse is \cite[Thm. 2]{Butts-Wade}.
\end{proof}
\noindent
In all of our applications, $R$ is either an $S$-integer ring in a global 
field or a completion of such at a height one prime.  By the results of this 
section, the map $I \mapsto |I| = \# R/I$ is an ideal norm on these rings.  
We will call this norm \textbf{canonical}.  We ask the reader to verify 
that the norm of Example 1.1 is canonical, as are the norms $| \cdot |_{\# k}$ 
of Examples 1.2 and 1.3 when the field $k$ is finite.

\subsection{Euclidean norms}
\textbf{} \\ \\ \noindent
A norm $|\cdot |$ on $R$ is \textbf{Euclidean} if for all $x \in K$, 
there is $y \in R$ such that $|x-y| < 1$.  Whether $R$ is 
Euclidean for $| \cdot |$ depends only on the equivalence class of the norm.
\\ \\
Example 1.6: The norm $| \cdot |_{\infty}$ on $\Z$ is Euclidean.  The norms $| \cdot |_a$ on $k[t]$ are Euclidean.  For a DVR, the norms $|\cdot |_a$ (c.f. Example 4) are Euclidean: indeed, for $x \in K^{\bullet}$, $x \in K \setminus R \iff v(x) < 0 \iff |x|_a = a^{v(x)} < 1$, so we may take $y =0$.  In a similar way, to any semilocal PID $R$ one can attach a natural family of Euclidean norms (including the canonical norm if $R$ is a finite quotient domain).
\\ \\
Example 1.7: $S = \Z_K$ is the ring of integers in a number field $K$.  It is a classical problem to determine whether 
$R$ is Euclidean for the canonical norm, or \textbf{norm-Euclidean}.  Note that a norm-Euclidean number field 
has class number one.  Conditional on the Generalized Riemann Hypothesis, it is known that every number field of class number one except 
$\Q = K(\sqrt{-D})$ for $D = 19, 43, 67, 163$ is Euclidean for some non-canonical norm.\footnote{In fact the definition of a norm function one finds 
in the literature is a little weaker than ours, in that multiplicativity is 
replaced by the condition $|x| \leq |xy|$ for all $x,y \in R^{\bullet}$.}  This is to be contrasted with the standard conjecture that there are infinitely many class number one real quadratic fields and the fact that there are only finitely many \emph{norm-Euclidean} real quadratic fields \cite{BSD}.

\section{Euclidean quadratic forms and ADC forms}

\subsection{Euclidean quadratic forms} \textbf{} \\ \\
Let $(R,| \cdot |)$ be a normed ring of characteristic not $2$.  A \textbf{quadratic form} over $R$ is a polynomial $q \in R[x] = R[x_1,\ldots,x_n]$ which is homogeneous of degree $2$.  Throughout this note we only consider quadratic forms which are non-degenerate over the fraction field $K$ of $R$.  A nondegenerate quadratic form $q_{/R}$ is 
\textbf{isotropic} if there exists $a = (a_1,\ldots,a_n) \in R^n \setminus \{(0,\ldots,0)\}$ such that $q(a) = 0$; otherwise $q$ is \textbf{anisotropic}.  A form $q$ is anisotropic over $R$ iff it is anisotropic over $K$.  A quadratic 
form $q_{/R}$ is \textbf{universal} if for all $d \in R$, there exists $x \in R^n$ such that $q(x) = d$.  
\\ \\
A quadratic form $q$ on a normed ring $(R,| \cdot |)$ is \textbf{Euclidean} if for all $x \in K^n \setminus R^n$, there exists $y \in R^n$ such $0 < |q(x-y)| < 1$.  
(Again, this definition depends only on the equivalence class of the norm.)  
\\ \\
Remark: An anisotropic quadratic form $q$ is Euclidean iff for all $x \in K^n$ there exists $y \in R^n$ such that 
$|q(x-y)| < 1$.  
\begin{prop}
\label{EUCLIDRINGFORMPROP}
The norm $| \cdot |$ on $R$ is a Euclidean norm iff the quadratic form $q(x) = x^2$ is a Euclidean quadratic form.
\end{prop}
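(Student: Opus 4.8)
The plan is to unwind both definitions and observe that they literally coincide once we restrict to the one-variable case $n = 1$. Recall that for the quadratic form $q(x) = x^2$ in a single variable, a vector in $K^n = K$ is just an element $x \in K$, and an element of $R^n = R$ is just a $y \in R$; the quantity $q(x - y)$ is $(x-y)^2$, and by multiplicativity of the norm (axiom (N1)) we have $|q(x-y)| = |(x-y)^2| = |x-y|^2$.

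First I would handle the easy direction. Suppose $|\cdot|$ is a Euclidean norm on $R$. To show $q(x) = x^2$ is a Euclidean quadratic form, take any $x \in K \setminus R$. Since $x \neq 0$ and, being outside $R$, $x - y \neq 0$ for every $y \in R$, the Euclidean-norm hypothesis produces $y \in R$ with $|x - y| < 1$; by (N0) we also have $|x-y| > 0$. Squaring, $0 < |x-y|^2 = |q(x-y)| < 1$, which is exactly the defining condition for $q$ to be Euclidean. (Here I should note that $q(x) = x^2$ is visibly anisotropic over $K$ since $K$ is a field, so there is no issue with the form being degenerate; the Remark preceding the proposition is also consistent with this.)

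For the converse, suppose $q(x) = x^2$ is a Euclidean quadratic form. Let $x \in K$ be arbitrary; I must find $y \in R$ with $|x - y| < 1$. If $x \in R$, take $y = x$, giving $|x-y| = 0 < 1$. If $x \in K \setminus R$, the Euclidean-form hypothesis gives $y \in R$ with $0 < |q(x-y)| < 1$, i.e. $0 < |x - y|^2 < 1$; taking square roots in $\Q^{>0}$ (the norm extends to a homomorphism $K^\times \to \Q^{>0}$ as in the Remark, and $t \mapsto t^2$ is order-preserving and injective there) yields $0 < |x-y| < 1$, as desired.

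There is essentially no obstacle here: the only thing to be careful about is the slight asymmetry in the two definitions — the Euclidean-norm definition quantifies over all $x \in K$ while the Euclidean-form definition quantifies only over $x \in K^n \setminus R^n$ and additionally demands $|q(x-y)| > 0$. Both discrepancies are resolved trivially by treating the case $x \in R$ separately (where $y = x$ works) and by invoking (N0) to get strict positivity of $|x-y|$ whenever $x \notin R$. So the ``hard part,'' such as it is, is merely bookkeeping the $n = 1$ specialization and the elementary fact that squaring is a monotone bijection on the positive rationals.
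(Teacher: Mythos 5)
Your proof is correct and rests on exactly the same observation as the paper's: $|q(x-y)| = |x-y|^2$ and squaring preserves the condition of being $<1$ on positive rationals, with the paper absorbing your case bookkeeping (the $x \in R$ case and the positivity requirement) into the preceding Remark that an anisotropic form is Euclidean iff the condition $|q(x-y)|<1$ holds for all $x \in K^n$. No gaps; your write-up is just a more explicit version of the paper's one-line argument.
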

\begin{proof}
Noting that $q$ is an anisotropic quadratic form, this comes down to: \[ \forall x,y \in K, \
|x-y| < 1 \iff |q(x-y)| = |(x-y)^2| = |x-y|^2 < 1. \]
\end{proof}  
\noindent
Example 2.1: Let $n,a_1,\ldots,a_n \in \Z^+$.  Then the integral quadratic form $q(x) = a_1 x_1^2 + \ldots + a_n x_n^2$ is Euclidean iff 
$\sum_i a_i < 4$.  

\subsection{Euclideanity}
\noindent
For a quadratic form $q$ over a normed ring $(R,| \cdot |)$ with fraction field $K$, define for $x \in K^n$, 
\[E(q,x) = \inf_{y \in R^n} |q(x-y)| \]
and 
\[E(q) = \sup_{x \in K^n} E(q,x). \]
Let us call $E(q)$ the \textbf{Euclideanity} of $q$.  Thus an anisotropic form $q$ is Euclidean if $E(q) < 1$ and is not Euclidean when $E(q) > 1$.  The case $E(q) = 1$ is ambiguous: the form $q$ is \emph{not} Euclidean iff the supremum in the definition 
of $E(q)$ is attained, i.e., iff there exists $x \in K^n$ such that $E(q,x) = 1$.  A non-Euclidean form with $E(q) = 1$ will be said to be \textbf{boundary-Euclidean}.
\\ \\
We define the Euclideanity $E(R)$ of $R$ itself to be the Euclideanity of $q(x) = x^2$.  
\\ \\
Example 2.2: Take $R = \Z$ with its canonical norm and $n,a_1,\ldots,a_n \in \Z^+$, as in Example 2.1 above.  Then 
\[ E(a_1 x_1^2 + \ldots + a_n x_n^2) = \frac{ a_1 + \ldots + a_n}{4}. \]
The forms with $E(q) = 1$ are boundary-Euclidean.

\subsection{ADC-forms} \textbf{} \\ \\ \noindent
A quadratic form $q(x) = q(x_1,\ldots,x_n)$ over $R$ is an \textbf{ADC-form} if for all $d \in R$, if there exists 
$x \in K^n$ such that $q(x) = d$, then there exists $y \in R^n$ such that $q(y) = d$.  
\\ \\
Example 2.3: Any universal quadratic form is an ADC-form.  If $R = \Z$ and $q$ is positive definite and \textbf{positive universal} -- i.e., represents all positive integers -- then $q$ is an ADC-form.  Thus for each $n \geq 5$ 
there are infinitely many positive definite ADC-forms, e.g. $x_1^2 + \ldots + x_{n-1}^2 + d x_n^2$ for $d \in \Z^+$.  
\\ \\
Example 2.4: Let $\tilde{R}$ be the integral closure of $R$ in $K$. Then $q(x) = x^2$ is \emph{not} an ADC-form iff there 
exists $a \in \tilde{R} \setminus R$ such that $a^2 \in R$.  In particular $x^2$ is an ADC-form if $R$ is integrally closed.
\\ \\
Example 2.5: Let $R$ be a UFD and $a \in R^{\bullet}$.  Then $q(x) = ax^2$ is ADC iff $a$ is squarefree.
\\ \\
Example 2.6: Suppose $R$ is an algebra over a field $k$, and let $q_{/k}$ be isotropic.  Then the base extension of $q$ to $R$ is universal.  Indeed, since $q$ is isotropic over $k$, it contains the hyperbolic plane as a subform.  That is, after a $k$-linear change of variables, we may assume $q = x_1x_2 + q'(x_3,\ldots,x_n)$, and the conclusion is now clear.
\\ \\
Example 2.7: The isotropic form $q(x,y) = x^2 - y^2$ is not an ADC-form over $\Z$: it is universal over $\Q$ but not over $\Z$.    
\begin{thm}
\label{EUCIMPLIESADCTHM}
\label{MAINTHM}
Let $(R,| \cdot |)$ be a normed ring not of characteristic $2$ and $q_{/R}$ a Euclidean quadratic form.  Then $q$ is an ADC form.
\end{thm}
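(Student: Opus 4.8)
The plan is to adapt the classical Aubry--Davenport--Cassels argument. Suppose $q$ is Euclidean over $(R,|\cdot|)$ and let $d \in R$ be $K$-represented by $q$; we must produce an $R$-representation. Because $q$ is anisotropic (a Euclidean form is by definition anisotropic, since we require $0 < |q(x-y)|$ and more to the point an isotropic form would make the relevant division step degenerate), we may work with the associated symmetric bilinear form $\langle x, y \rangle = \tfrac{1}{2}(q(x+y) - q(x) - q(y))$, which makes sense as $\operatorname{char} R \neq 2$, so that $q(x) = \langle x, x\rangle$ and $\langle \cdot,\cdot\rangle$ is $K$-valued on $K^n$. The key device is a descent on denominators: among all $x \in K^n$ with $q(x) = d$, if there is no $x \in R^n$ then choose one whose ``denominator'' is as small as possible with respect to the norm, and derive a contradiction by exhibiting another solution with strictly smaller denominator.

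Here are the steps in order. First I would fix $x \in K^n$ with $q(x) = d$ and let $t \in R^{\bullet}$ be a common denominator, i.e.\ $tx \in R^n$; among all valid choices of $(x,t)$ pick one minimizing $|t|$, and assume for contradiction $|t| > 1$ (equivalently $x \notin R^n$). Second, using Euclideanity of $q$, choose $y \in R^n$ with $0 < |q(x-y)| < 1$. Third, set up the standard reflection: define
\[
x' = \frac{(2\langle x, y\rangle - q(y))x - (q(x) - \langle x, y\rangle)\cdot 2\,? }{\cdots}
\]
— more precisely, following Serre one sets $a = q(x-y)$, $b = 2(q(x) - \langle x,y\rangle) = 2d - 2\langle x,y\rangle$, and $x' = b x - a y$, wait I should write it as $x' = \frac{1}{a}\bigl( b\, ? \bigr)$; the clean version is: with $a = q(x-y) \in K$, define $x' = \frac{(2 d - 2\langle x, y\rangle)}{a}\, y + \bigl(1 - \tfrac{2d - 2\langle x,y\rangle}{a}\bigr)? $ Let me just say: one defines $x'$ as the image of $x$ under reflection through $y$ in the quadratic space, rescaled so that $q(x') = q(x) = d$; concretely $x' = y + \lambda(x - y)$ for a suitable $\lambda \in K^\times$ forced by $q(x') = d$, and one checks $\lambda = \frac{2\langle x - y, y\rangle + \text{(something)}}{q(x-y)}$. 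Fourth, and this is the heart, I would compute a denominator for $x'$: writing $t' $ for a denominator of $x'$ and using that $ta x' \in R^n$ once one clears things carefully, show $t' a = t \cdot(\text{unit or smaller})$ in a way that forces $|t'| \le |t| \cdot |a| < |t|$, contradicting minimality of $|t|$. Fifth, conclude $|t| = 1$, so $t \in R^\times$ and $x \in R^n$.

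The main obstacle, and the place where the normed-ring generality (as opposed to $R = \Z$) bites, is Step 4: controlling the denominator of $x'$ purely through the norm. In the classical proof over $\Z$ one uses that $n \cdot n' = (\text{numerator stuff})$ and that $0 \le a < 1$ forces $n' < n$ as positive integers; here we only have a multiplicative norm $|\cdot| : K^\times \to \Q^{>0}$, so I need to argue at the level of the group of divisibility. The clean way is: let $t$ be chosen so that $(tx_1, \ldots, tx_n) \in R^n$ and with $|t|$ minimal; show $q(x - y) = a = \tilde a / t^2 \cdot(\text{unit})$ hmm — actually the right bookkeeping is to note $t^2 a = t^2 q(x - y) = q(tx - ty) \in R$ (since $tx, ty \in R^n$ and $q$ has coefficients in $R$), call this $N := t^2 a \in R$, and then show that $t a \cdot x' \in R^n$; since $|ta| = |t||a| < |t|$ and $|t a| = |N|/|t| \in \Q^{>0}$, I get a denominator of $x'$ dividing $ta$ in the monoid sense, hence of norm $< |t|$, provided $ta \ne 0$ — and $ta = 0$ is excluded because $a = q(x-y) \ne 0$ as $|q(x-y)| > 0$, and because the form is anisotropic $x' \neq 0$ is guaranteed so the representation is genuine. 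The remaining subtlety is ensuring $ta$ is actually realized by an element of $R^\bullet$ (not merely that its norm is small): since $N = t^2 a \in R^\bullet$ and $t \in R^\bullet$ with $t \mid N$? — that divisibility may fail in a non-UFD, so the cleanest fix is to not insist $ta \in R$ but instead carry the denominator as a fractional quantity and invoke (N1) on $K^\times \to \Q^{>0}$: a new solution $x'$ with denominator $t' \in R^\bullet$ satisfying $t' \mid ta$-numerator forces $|t'| \le |t|\,|a| < |t|$. I would write this out by taking $t' $ to be the element $ t a$ cleared to $R$, i.e.\ replace $(x', t')$ by $(x', t'')$ with $t''$ an honest denominator dividing the natural one, and the norm inequality $|t''| \le |ta| = |t||a| < |t|$ is all that is needed for the contradiction.
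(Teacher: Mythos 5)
Your overall strategy (Serre-style descent on denominators using the Euclidean condition to produce a solution with strictly smaller denominator) is exactly the paper's approach, but the execution has a genuine gap at the one place where the general normed-ring setting matters. The reflection point $x''$ is never actually pinned down (your formulas are left with placeholders), and, more seriously, the denominator-control step is not valid as written. You correctly sense the problem: you need an honest element $t''\in R^{\bullet}$ with $t''x''\in R^n$, $q(x'')=d$ and $|t''|<|t|$. Your proposed fix --- ``carry the denominator as a fractional quantity'' $ta$ and then ``replace by an honest denominator dividing the natural one'' --- does not work in a general domain: if $ta\notin R$, the mere fact that $ta\cdot x''\in R^n$ (which you also do not verify) gives no element of $R^{\bullet}$ clearing the denominators of $x''$ with norm at most $|t||a|$. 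The norm is only a multiplicative map to $\Q^{>0}$, not a valuation; the set of denominators of $x''$ in $R$ need not contain anything comparable to the fractional quantity $ta$, and ``dividing in the monoid sense'' has no meaning for an element of $K\setminus R$. So the contradiction with minimality of $|t|$ is never actually reached.

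The paper closes exactly this gap by writing the new solution with explicit integral formulas, so that no clearing of fractional denominators is ever needed: with $t x=x'\in R^n$, $q(x')=t^2d$, and $y\in R^n$ chosen by Euclideanness so that $z=x-y$ satisfies $0<|q(z)|<1$, one sets
\[
a = y\cdot y - d,\qquad b = 2dt - 2(x'\cdot y),\qquad T = at+b,\qquad X = ax'+by,
\]
noting $a,b,T\in R$ and $X\in R^n$ because $2(u\cdot v)\in R$ for $u,v\in R^n$, and then verifies the two identities $X\cdot X = T^2 d$ and $T = t\,(z\cdot z)$; the second identity plus $0<|q(z)|<1$ gives $0<|T|<|t|$ at once. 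In fact your specific worry is unfounded for the same reason: $t\,q(x-y)=td+t\,q(y)-2\bigl((tx)\cdot y\bigr)$ lies in $R$, so the ``natural'' new denominator is already an element of $R^{\bullet}$ --- but this has to be seen from the formula, not from a divisibility argument, and your write-up neither proves it nor supplies a substitute. (A minor additional point: Euclidean forms are not anisotropic by definition in this paper --- the definition only requires $0<|q(x-y)|<1$ for $x\in K^n\setminus R^n$ --- and anisotropy is not needed anywhere in the descent; the nonvanishing $q(z)\neq 0$ at the chosen point is what guarantees $T\neq 0$.)
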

\begin{proof} 
For $x,y \in K^n$, 
put $x \cdot y := \frac{1}{2}(q(x+y) - q(x) - q(x))$.  Then $(x,y) \mapsto x \cdot y$ is bilinear and $x \cdot x = q(x)$.  Note that for 
$x,y \in R^n$, we need not have $x \cdot y \in R$, but certainly we have $2(x \cdot y) \in R$.    \\ \indent
Let $d \in R$, and suppose there exists $x \in K^n$ such that $q(x) = d$.  Equivalently, there exists $t \in R$ 
and $x' \in R^n$ such that $t^2 d = x' \cdot x'$.  Choose $x'$ and $t$ such that $|t|$ is minimal.  It is enough to 
show that $|t| = 1$, for then by (N1) $t \in R^{\times}$.  \\ \indent
Apply the Euclidean hypothesis with $x = \frac{x'}{t}$: there is $y \in R$ such that if $z = x - y$, 
\[ 0 < |q(z)| < 1. \]
Now put 
\[a = y \cdot y - d, \ b = 2dt - 2(x' \cdot y), \ T = at + b, \ X = ax'+by. \]
Then $a,b,T \in R$, and $X \in R^n$. \\ \noindent
{\sc Claim}: $X \cdot X = T^2 d$. \\
Indeed,
\[X \cdot X = a^2 (x' \cdot x') + ab(2 x' \cdot y) = b^2 (y \cdot y) = a^2 t^2 d + ab(2dt-b) + b^2(d+a) \]
\[= d(a^2 t^2 + 2abt + b^2) = T^2 d. \]
{\sc Claim}: $T = t(z \cdot z)$. \\
Indeed, 
\[tT = at^2 + bt = t^2 (y \cdot y) - dt^2 + 2dt^2 - t (2 x' \cdot y) \]
\[= t^2 (y \cdot y) - t(2 x' \cdot y) + x' \cdot x' = (ty-x') \cdot (ty-x') = (-tz) \cdot (-tz) = t^2 (z \cdot z). \] 
Since $0 < |z \cdot z| < 1$, we have $0 < |T| < |t|$, contradicting the minimality of 
$|t|$.
\end{proof} 
\noindent
Remark: This proof is modelled on that of \cite[pp. 46-47]{Serre}.  
\\ \\
Example 2.8: Let $R = \Z$ with its canonical norm, and consider $q_1(x,y) = x^2 + 3y^2$ and $q_2(x,y) = 2x^2 +2y^2$.  Both of these forms are non-Euclidean forms with Euclideanity $1$, i.e., boundary-Euclidean forms.  It happens that $q_1$ is nevertheless an ADC-form, a fact whose essential content was well known 
to the great number theorists of the 18th century.  For instance, one can realize $q_1$ as an index $2$-sublattice of the maximal lattice (see $\S 2.6$) 
$q'(x,y) = x^2 + xy +y^2$ which \emph{is} Euclidean (this corresponds to 
the fact that the ring of integers of $\mathbb{Q}(\sqrt{-3})$ is a Euclidean 
domain) and then reduce the problem of integer representations of $q_1$ 
to that of integer representations of $q'$ with certain parity conditions.  But in fact Weil \cite[pp. 292-295]{Weil} modifies the proof of Aubry's theorem (i.e., essentially the same argument used to prove Theorem \ref{MAINTHM}) to show directly that the boundary-Euclidean form $q_1$ is ADC.  His argument also works for the boundary-Euclidean forms $x_1^2 + x_2^2 +2 x_3^2$ and $x_1^2 + x_2^2 + x_3^2 
+ x_4^2$.  However, it does not work for $q_2$: indeed, $q_2(\frac{1}{2},\frac{1}{2}) = 1$ but $q_2$ evidently does not $\Z$-represent $1$, so $q_2$ is not ADC.  \\ \indent Is there a supplement to 
Theorem \ref{MAINTHM} giving necessary and sufficient conditions for 
a boundary-Euclidean form to be ADC?  We leave this as an open problem.

\subsection{The Generalized Cassels-Pfister Theorem} 

\begin{lemma}
\label{ANISOTROPYLEMMA}
Let $q$ be an anisotropic quadratic form over a field $k$.  Then $q$ remains anisotropic over the rational function field $k(t)$.
\end{lemma}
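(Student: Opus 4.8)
The plan is to prove the contrapositive: if $q$ becomes isotropic over $k(t)$, then $q$ was already isotropic over $k$. Suppose $q(x_1,\ldots,x_n)$ acquires a nontrivial zero over $k(t)$. Clearing denominators, there exist polynomials $f_1(t),\ldots,f_n(t) \in k[t]$, not all zero, with $q(f_1,\ldots,f_n) = 0$ as an identity in $k[t]$. Among all such tuples of polynomials, choose one for which $\max_i \deg f_i$ is minimal; I will show this minimal degree is $0$, which exhibits a nontrivial zero of $q$ over $k$ and completes the argument.

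First I would rule out the degenerate possibility that all $f_i$ are constant but the minimal degree computation is vacuous: if $\max_i \deg f_i = 0$, then $(f_1,\ldots,f_n) \in k^n \setminus \{0\}$ is an anisotropy-violating vector over $k$, contradiction, so we are done in that case. Hence assume for contradiction $d := \max_i \deg f_i \geq 1$. The key step is a descent using the norm on $k[t]$ from Example 1.2: viewing $k[t]$ as a normed ring with $|f| = 2^{\deg f}$, the ``constant'' form $q$ is Euclidean with respect to this norm — this is exactly the content promised in the introduction (the strategy ``find a natural norm on $R = F[t]$, show any constant form is Euclidean'') and I would invoke it, or prove the one needed instance directly: given the vector $x = (f_1/f_n, \ldots, f_{n-1}/f_n, 1) \in k(t)^n$ — after reindexing so that $f_n$ has maximal degree $d$ — polynomial division lets us replace each $f_i/f_n$ by its remainder modulo a suitable approximation so that $|q(x - y)| < 1$ for some $y \in k[t]^n$. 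Then Theorem \ref{MAINTHM}'s proof mechanism (Euclidean $\Rightarrow$ ADC, run with $d = 0$) produces a new zero $(g_1,\ldots,g_n)$ with strictly smaller norm, i.e.\ strictly smaller maximal degree, contradicting minimality.

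Alternatively, and more elementarily, I would run the classical Cassels-style descent by hand: assume $f_n$ has the strictly largest degree (one can arrange strict maximality by a generic $k$-linear change of variables on $x$, permissible since $q$ stays anisotropic and the identity $q(f)=0$ transforms covariantly), write $f_i = f_n h_i + r_i$ with $\deg r_i < \deg f_n$, set $y = (h_1,\ldots,h_{n-1},1)$ and $z = x - y$ where $x_i = f_i/f_n$; then $q(z) = q(x) - 2\,x\cdot y + q(y) = -2\,x\cdot y + q(y)$, and since $q(x) = 0$ one gets that $q(z) \cdot f_n^2$ is a polynomial of degree $< 2\deg f_n = 2d$, while the algebraic identity $T = t(z\cdot z)$-style manipulation from the proof of Theorem \ref{MAINTHM} yields a fresh solution of strictly smaller degree. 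The main obstacle is the bookkeeping to guarantee the strict degree drop: one must check that $q(z)$ is genuinely nonzero (which holds because $q$ is anisotropic and $z \neq 0$, as $x \notin k[t]^n$ forces some $x_i - y_i \neq 0$) and that the degree of the numerator $T$ of the new solution is strictly less than $d$; this is where anisotropy is used essentially, and it is exactly the place where the argument would fail for isotropic $q$ (compare Example 2.7).
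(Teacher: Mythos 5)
There is a genuine gap, and it sits exactly at the step you flag as ``bookkeeping.'' Your descent (in both variants) needs $q(z) \neq 0$ for $z = x - y \in k(t)^n$, and you justify this by ``$q$ is anisotropic and $z \neq 0$.'' But anisotropy is only hypothesized over $k$; over $k(t)$ it is precisely what is being proved, and under your contradiction hypothesis $q$ \emph{is} isotropic over $k(t)$, so a nonzero $z$ with $q(z) = 0$ cannot be ruled out. The same circularity infects the appeal to the Euclidean property and to the mechanism of Theorem \ref{MAINTHM}: the Euclidean condition requires $0 < |q(x-y)|$, and the descent there is run for a nonzero represented element, whereas here you are running it ``with $d = 0$,'' where the identity $X \cdot X = T^2 d = 0$ gives no control --- nothing forces the new vector $X = ax' + by$ to be nonzero, nor its maximal degree to drop. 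So the strict degree decrease, which is the entire content of the argument, is not established. (Logically the dependence also runs the other way in the paper: this lemma is what supplies anisotropy over $k(t)$ so that the Cassels--Pfister descent can be applied; it cannot be derived from that descent.)

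The paper's proof is far shorter and avoids all of this: clear denominators and divide by the gcd to get $y \in k[t]^n$ with $q(y) = 0$ and $\gcd(y_1,\ldots,y_n) = 1$; then the $y_i$ cannot all vanish at $t = 0$ (else $t$ would divide every $y_i$), so $(y_1(0),\ldots,y_n(0))$ is a nontrivial zero of $q$ in $k^n$. If you want to keep your minimal-degree setup, the correct repair is this specialization step, not a descent: primitivity plus evaluation at a point already finishes the argument, with no degree induction needed.
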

\begin{proof}
If there exists a nonzero vector $x \in k(t)^n$ such that $q(x) = 0$, then (since $k[t]$ is a UFD) there exists $y = (y_1,\ldots,y_n)$ such that $y \in R^n$, $\gcd(y_1,\ldots,y_n) = 1$ and $q(y) = 0$.  The polynomials 
$y_1,\ldots,y_n$ do not all vanish at $0$, so $(y_1(0),\ldots,y_n(0)) \in k^n 
\setminus (0,\ldots,0)$ is such that $q(y_1(0),\ldots,y_n(0)) = 0$, i.e., $q$ is isotropic over $k$.
\end{proof}

\begin{thm}(Generalized Cassels-Pfister Theorem)
\label{GENCPTHM}
Let $F$ be a field of characteristic not $2$, $R = F[t]$, and $K = F(t)$.  
Let $q = \sum_{i,j} a_{ij}(t) x_i x_j$ be a quadratic form over $R$.  We suppose 
that \emph{either}: \\
(i) $q$ is anisotropic and each $a_{ij}$ has degree $0$ or $1$, or \\
(ii) Each $a_{ij}$ has degree $0$, i.e., $q$ is the extension of a quadratic 
form over $k$.  \\
Then $q$ is an ADC form.
\end{thm}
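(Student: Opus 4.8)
The plan is to reduce to Theorem \ref{MAINTHM} by exhibiting $q$ as a Euclidean form over $R = F[t]$ for the most naive possible norm. There are really two regimes. If $q$ is isotropic over $K = F(t)$, then we must be in case (ii) (case (i) hypothesizes anisotropy), and the contrapositive of Lemma \ref{ANISOTROPYLEMMA} shows that a constant form isotropic over $F(t)$ is already isotropic over $F$; then Example 2.6 makes the base extension of $q$ to $R$ universal, and Example 2.3 makes it an ADC form. So I would dispatch that case first and assume from now on that $q$ is anisotropic over $K$, hence over $R$, with every coefficient $a_{ij}(t)$ of degree $\leq 1$ (this includes all of case (ii)).

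Next I would put on $R = F[t]$ the norm $|f| = 2^{\deg f}$ of Example 1.2, extended to $K$ by $|a/b| = |a|/|b|$, and claim that $q$ is Euclidean for it. Given $x \in K^n \setminus R^n$, choose a common denominator, $x_i = v_i/g$ with $v_i, g \in F[t]$; since $x \notin R^n$ we may take $\deg g \geq 1$. Dividing with remainder in each coordinate, write $v_i = g w_i + r_i$ with $\deg r_i < \deg g$, and set $y = (w_1,\ldots,w_n) \in R^n$ and $r = (r_1,\ldots,r_n)$. Then $x - y = g^{-1} r$, so by homogeneity $q(x-y) = g^{-2} q(r)$, where $q(r) = \sum_{i,j} a_{ij}(t) r_i r_j \in F[t]$.

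Everything then comes down to one degree count. Since $q$ is anisotropic over $K$ and $x \notin R^n$ forces $x - y \neq 0$, we have $q(r) = g^2 q(x-y) \neq 0$. On the other hand,
\[
\deg q(r) \;\leq\; \max_{i,j}\bigl(\deg a_{ij} + \deg r_i + \deg r_j\bigr) \;\leq\; 1 + 2(\deg g - 1) \;=\; 2\deg g - 1 \;<\; \deg(g^2),
\]
using $\deg a_{ij} \leq 1$ and $\deg r_i \leq \deg g - 1$ (in case (ii) one even gets $2\deg g - 2$). Therefore $0 < |q(x-y)| = 2^{\deg q(r) - 2\deg g} < 1$, so $q$ is a Euclidean form over $(R,|\cdot|)$, and Theorem \ref{MAINTHM} (applicable since $\car F \neq 2$) shows $q$ is an ADC form.

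The main thing to watch — more a point of care than a real obstacle — is the interplay of anisotropy with the definition of a Euclidean form: that definition insists on $q(x-y) \neq 0$, which is exactly why the isotropic constant forms must be peeled off separately and why Lemma \ref{ANISOTROPYLEMMA} is needed, namely to guarantee $q(x-y) \neq 0$ in $F(t)$. The other point worth isolating is that the hypothesis $\deg a_{ij} \leq 1$ in case (i) is precisely what keeps $\deg q(r)$ strictly below $\deg(g^2) = 2\deg g$: the estimate is tight, landing exactly one degree short, so it does not survive naive weakening of that hypothesis. Note, incidentally, that none of the ideal-norm or finite-quotient machinery of $\S 1$ enters; the bare degree norm suffices.
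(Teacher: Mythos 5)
Your proposal is correct and follows essentially the same route as the paper: peel off the isotropic constant case via Lemma \ref{ANISOTROPYLEMMA} and Example 2.6, then show the anisotropic case is Euclidean for the degree norm $|\cdot|_2$ of Example 1.2 by coordinatewise polynomial division and a degree estimate, and invoke Theorem \ref{MAINTHM}. The only differences are cosmetic: you clear to a common denominator and count degrees explicitly, where the paper divides each $f_i/g_i$ separately and cites the non-Archimedean property, and you spell out the $q(x-y)\neq 0$ point that the paper leaves to its remark on anisotropic Euclidean forms.
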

\begin{proof}
Suppose first that $q$ is isotropic over $K$ and extended from a quadratic 
form $q$ over $k$.  By Lemma \ref{ANISOTROPYLEMMA}, $q_{/k}$ is isotropic.  Then by Example 2.6, $q_{/R}$ is universal.  \\ \indent
Now suppose that $q$ is anisotropic over $K$ and that each $a_{ij}$ has 
degree $0$ or $1$.  By Theorem \ref{MAINTHM}, it suffices to show that as a quadratic form over $R = k[t]$ endowed with the norm $| \cdot | = | \cdot |_2$ of Example 1.2, $q$ is Euclidean.  \\ \indent
Given an element $x = (\frac{f_1(t)}{g_1(t)},\ldots,
\frac{f_n(t)}{g_n(t)}) \in K^n$, by polynomial division we may write $\frac{f_i}{g_i} = y_i + \frac{r_i}{g_i}$ 
with $y_i,r_i \in k[t]$ and $\deg(r_i) < \deg(g_i)$.  Putting $y = (y_1,\ldots,y_n)$ and using the non-Archimedean property 
of $| \cdot |$, we find
\begin{equation}
\label{CPEQN}
|q(x-y)| = |\sum_{i,j} a_{i,j} (\frac{r_i}{g_i}) (\frac{r_j}{g_j}) | \leq \left( \max_{i,j} |a_{i,j}| \right) 
\left( \max_i  |\frac{r_i}{g_i}| \right)^2 < 1. 
\end{equation}
\end{proof}
\noindent
Remark: Example 2.5 shows that the conclusion Theorem \ref{GENCPTHM} does not extend to all forms with $\max_{i,j} \deg(a_{ij}) \leq 2$.

\subsection{Maximal Lattices} \textbf{} \\ \\ \noindent
When studying quadratic forms over integral domains it is often convenient to use the terminology 
of lattices in quadratic spaces.  Let $R$ be a domain with fraction field 
$K$, let $V$ be a finite-dimensional vector space, and let $q: V \ra K$ 
be a quadratic form.  An \textbf{R-lattice} $\Lambda$ in $V$ is a finitely 
generated $R$-submodule of $V$ such that $\Lambda \otimes_R K = V$.  A \textbf{quadratic R-lattice} is an $R$-lattice $\Lambda$ in the quadratic 
space $(V,q)$ such that $q(\Lambda) \subset R$.  
\\ \\
In particular, if $q: R^n \ra R$ is a quadratic form, then tensoring from 
$R$ to $K$ gives a quadratic form $q: K^n \ra K$ and taking $V = K^n$, $\Lambda = R^n$ 
gives a quadratic $R$-lattice.  Conversely, a quadratic lattice $\Lambda$ 
in $R^n$ which is \emph{free} as an $R$-module may be identified with a quadratic 
form over $R$.
\\ \\
A quadratic $R$-lattice $\Lambda$ is said to be \textbf{maximal} if it is 
not strictly contained in another quadratic $R$-lattice.\footnote{For the sake of brevity, we will sometimes simply say that the quadratic form $q$ is maximal if its associated free quadratic lattice is maximal.}  If $R$ is Noetherian, 
then discriminant considerations show that every quadratic $R$-lattice is 
contained in a maximal quadratic $R$-lattice.

\begin{prop}
\label{EUCIMPLIESMAXPROP}
Let $(R,|\cdot |)$ be a normed ring and $q_{/R}$ a Euclidean quadratic form.  Then 
the associated quadratic $R$-lattice $\Lambda = R^n$ is maximal.
\end{prop}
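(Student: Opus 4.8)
The plan is to argue by contradiction. Suppose $\Lambda = R^n$ is not maximal; then there is a quadratic $R$-lattice $\Lambda'$ in the quadratic space $V = K^n$ with $R^n \subsetneq \Lambda'$ and $q(\Lambda') \subseteq R$. First I would choose a vector $x \in \Lambda' \setminus R^n$. Since by definition $\Lambda' \subseteq V = K^n$, such an $x$ lies in $K^n \setminus R^n$, so the Euclidean hypothesis on $q$ applies to it: there exists $y \in R^n$ with $0 < |q(x-y)| < 1$.

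Next I would observe that $x - y \in \Lambda'$, because $x \in \Lambda'$ and $y \in R^n = \Lambda \subseteq \Lambda'$; hence $q(x-y) \in q(\Lambda') \subseteq R$. The only real point is then the elementary remark that a nonzero element of $R$ has norm at least $1$: by axiom (N0) the norm takes values in $\N$ and vanishes only at $0$. Since $|q(x-y)| > 0$ forces $q(x-y) \neq 0$, we obtain $|q(x-y)| \geq 1$, contradicting $|q(x-y)| < 1$. This contradiction shows that $\Lambda = R^n$ is maximal.

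I do not expect a serious obstacle here: the statement is essentially the translation of the Euclidean property into lattice language, and its whole content is that the $R$-values taken by $q$ on a quadratic lattice cannot have norm strictly between $0$ and $1$. The only things to be careful about are the direction of the lattice inclusion (so that $y \in R^n$ really lies in $\Lambda'$ and $q(x-y)$ really lands in $R$) and the fact that a proper overlattice $\Lambda'$ genuinely contains a vector of $K^n$ outside $R^n$ — but this is immediate from $\Lambda' \subseteq V = K^n$ together with the strict containment $R^n \subsetneq \Lambda'$.
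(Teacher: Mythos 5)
Your argument is correct and is essentially the paper's own proof: pick $x \in \Lambda' \setminus R^n$, note $x - y \in \Lambda'$ for every $y \in R^n$ so that $q(x-y) \in R$ and hence $|q(x-y)| \in \N$, which contradicts the Euclidean condition $0 < |q(x-y)| < 1$. The paper merely states this more tersely (without phrasing it as an explicit contradiction), so there is nothing to add.
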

\begin{proof} For if not, there exists a strictly larger quadratic $R$-lattice 
$\Lambda'$.  Choose $x \in \Lambda' \setminus \Lambda$, so $x \in K^n \setminus R^n$.  For all $y \in \Lambda = R^n$, $x-y \in \Lambda'$, so $|q(x-y)| \in |R| = \N$.
\end{proof}
\noindent
Example 2.9: Let $(R,| \cdot |) = (\Z,| \cdot |_{\infty})$, and let $a \in \Z^{\bullet}$.  Then: \\
a) The form $ax^2$ is maximal iff it is ADC iff $a$ is squarefree.  \\
b) The form $x^2 + ay^2$ is maximal iff $a$ is squarefree and $a \equiv 1,2 \pmod 4$. 
\\ \\
Example 2.9: The form $x_1^2 + \ldots + x_n^2$ is maximal 
iff it is Euclidean iff $n \leq 3$.

\section{Localization and Completion}
\noindent
In this section we show that Euclidean forms and ADC forms behave nicely 
under localization and completion, at least if we restrict to domains $R$ 
for which norm functions (resp. ideal norm functions) have the simplest 
structure, namely UFDs (resp. Dedekind domains).

\subsection{Localization and Euclideanity}
\textbf{} \\ \\ \noindent
Suppose first that $(R,| \cdot |)$ is a normed UFD, and 
$S$ is a saturated multiplicatively closed subset.  We shall define a \textbf{localized norm} $| \cdot |_S$ on the localization $S^{-1} R$.  To do so, 
recall that $S^{-1} R$ is again a UFD and its principal prime ideals $(\pi)$ 
are precisely those for which $\pi \cap S = \varnothing$.  Therefore we may 
view the monoid $\Prin(S^{-1} R)$ as a submonoid of $\Prin(R)$ by taking it 
to be the direct sum over all the height one prime ideals $(\pi)$ of $R$ 
with $(\pi) \cap S = \varnothing$: let $\iota$ be this embedding of monoids.   We define the localized norm $| \cdot |_S: 
\Prin(S^{-1} R) \rightarrow \Z^+$ by $|x|_S := |\iota(x)|$.  
\\ \\
Remark 3.1: Here are two easy and useful properties of the localized norm:
\\ \\
$\bullet$ Any $x \in R^{\bullet}$ may be written as $s_x x'$ with $s_x \in S$ 
and $x'$ prime to $S$, and we have \[|x|_S = |s_x x'|_S = |x'|_S = |x'|. \]
$\bullet$ For any $x \in R^{\bullet}$, $|x|_S \leq |x|$.  

\begin{thm}
\label{UFDLOCTHM}
Let $(R,|\cdot |)$ be a UFD with fraction field $K$, let $S \subset R^{\bullet}$ be a saturated multiplicatively closed subset, and let $R_S$ be the localization of $R$ at $S$.  Let $q(x) \in R[x]$ be a quadratic form, and suppose that 
$E \in \R^{> 0}$ is a constant such that for all $x \in K^n$, there exists 
$y \in R^n$ such that $|q(x-y)| \leq E$.  Then for all $x \in K^n$, there 
exists $y_S \in R_S^n$ such that $|q(x-y_S)|_S \leq E$.
\end{thm}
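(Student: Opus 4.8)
The plan is to reduce the problem over $R_S$ to the given approximation statement over $R$ by a clearing-of-denominators argument, exploiting that elements of $S$ are units in $R_S$ and that the localized norm is computed by discarding the $S$-part of an element. First I would fix $x \in K^n$ and record that, since $R$ is a UFD with fraction field $K$ (the same fraction field as $R_S$), I may write $x = \frac{1}{c} x_0$ for some $c \in R^{\bullet}$ and $x_0 \in R^n$; but more usefully, I want to apply the hypothesis to a vector that differs from $x$ by an $R_S^n$-vector. The idea is: choose the common denominator $c$ of the coordinates of $x$, and split $c = s_c c'$ with $s_c \in S$ and $c'$ prime to $S$ (possible since $R$ is a UFD and $S$ is saturated). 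Then $\frac{1}{s_c} \in R_S$, so replacing $x$ by $s_c x$ changes it only by... no — that is not an $R_S$-translation. Instead I would directly apply the hypothesis to the point $x$ itself, but track where the denominators live.

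More precisely, here is the cleaner route I would carry out. Given $x \in K^n$, apply the hypothesis \emph{over $R$} to get $y \in R^n$ with $|q(x-y)| \le E$. Now I want to produce $y_S \in R_S^n$ with $|q(x - y_S)|_S \le E$; the obvious candidate is the image of $y$ under $R \hookrightarrow R_S$, and then I must check $|q(x-y)|_S \le E$. The point is that $q(x-y) \in K$, and by the second bullet of Remark 3.1 (extended multiplicatively to $K^\times$ via the homomorphism $|\cdot|\colon K^\times \to \Q^{>0}$), we have $|z|_S \le |z|$ for all $z \in R^{\bullet}$, hence — by writing any element of $K^\times$ as a ratio and using multiplicativity of both norms — $|z|_S \le |z|$ for all $z \in K^{\times}$ as well. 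Wait: that inequality for ratios requires $|a/b|_S = |a|_S/|b|_S \le |a|/|b| $? That needs $|a|_S \le |a|$ and $|b|_S \ge |b|$, which is false in general. So the naive choice $y_S = y$ does not obviously work, and this is exactly where care is needed.

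The correct fix, and the step I expect to be the main obstacle, is to choose $y_S$ differently: clear the $S$-denominators in $x$ \emph{before} approximating. Concretely, write each coordinate of $x$ in lowest terms and let $s \in S$ be chosen so that $s x \in (S^{-1}R)^n$ has all coordinates with denominators prime to $S$ — i.e., arrange that the "bad" (non-$S$) part of the denominator is isolated. Then apply the $R$-hypothesis to a suitable vector $x'$ built from $x$ whose honest $R$-denominator is exactly the non-$S$ part $c'$ of the denominator of $x$; obtaining $y' \in R^n$ with $|q(x' - y')| \le E$, one then divides back through by the element of $S$ and uses Remark 3.1's first bullet ($|x|_S = |x'|$ when $x = s_x x'$ with $s_x \in S$) to convert $|q(x'-y')|$ into $|q(x - y_S)|_S$ for the appropriate $y_S \in R_S^n$. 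The key identities to verify are that $q$ is homogeneous of degree $2$, so scaling the argument by $\lambda \in K^\times$ scales $q$ by $\lambda^2$, and that multiplying through by an element $s \in S$ — a unit in $R_S$ — leaves the localized norm $|\cdot|_S$ unchanged on the relevant quantity while the translating vector stays in $R_S^n$. I would organize the bookkeeping by first treating $n = 1$ to fix notation, then running the general case; the only real content is the denominator split $c = s_c c'$ and the two bullets of Remark 3.1, everything else being the homogeneity of $q$.
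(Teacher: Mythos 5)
Your fix is exactly the paper's argument: write $x = a/b$ with $a \in R^n$, $b \in R^{\bullet}$, factor $b = s_b b'$ with $s_b \in S$ and $b'$ prime to $S$, apply the hypothesis over $R$ to $a/b'$ to get $y \in R^n$, and take $y_S = y/s_b \in R_S^n$, using homogeneity of $q$ together with the two bullets of Remark 3.1. The paper tidies the final bookkeeping by clearing denominators, so that the comparison $|\cdot|_S \leq |\cdot|$ is applied to $q(a-b'y) \in R$ (where it is valid) -- precisely the subtlety your correctly rejected naive attempt ($y_S = y$) identified -- but your plan amounts to the same computation and goes through.
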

\begin{proof}
Let $x \in K^n$.  We must find $Y \in R_S^n$ such that $|q(x-Y)|_S \leq E$.  
Writing $x = \frac{a}{b}$ with $a \in R^n$ and $b \in R^{\bullet}$ and clearing 
denominators, it suffices to find $y_S \in R_S^n$ such that 
\[ |q(a-by_S)|_S \leq E |b|_S^2. \]
As above, we may factor $b$ as $s_b b'$ with $s_b \in S$ and $b'$ prime to $S$, 
so $|b'|_S = |b'|$.  Applying our hypothesis to the element $\frac{a}{b'}$ of $K^n$ we may choose $y \in R^n$ such that 
$|q(a-b'y)| \leq E |b'|^2$.  Now put $y_S = \frac{y}{s_b}$, so 
\[|q(a-by_S)|_S = |q(a-b'y)|_S \leq |q(a-b'y)| \leq E |b'|^2 = E |b'|_S^2 = E|b|_S^2. \] 
\end{proof}

\begin{cor}
Retain the notation of Theorem \ref{UFDLOCTHM} and write $q_S$ for $q$ viewed as a quadratic form on the normed ring $(R_S,| \cdot |_S)$.  Then: \\
a) $E(q_S) \leq E(q)$.  \\
b) If $q$ is Euclidean, so is $q_S$.
\end{cor}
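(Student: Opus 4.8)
The plan is to read both statements off of Theorem~\ref{UFDLOCTHM}, using its \emph{proof} (not merely its statement) for part~(b). First note that $S \subset R^{\bullet}$, so $R_S$ has the same fraction field $K$ as $R$; hence $E(q)$ and $E(q_S)$ are suprema over the common index set $K^n$.

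For~(a): fix any real $E > E(q)$ (if $E(q) = \infty$ the inequality is vacuous). For every $x \in K^n$ we have $\inf_{y \in R^n}|q(x-y)| = E(q,x) \le E(q) < E$, so some $y \in R^n$ satisfies $|q(x-y)| \le E$; thus the hypothesis of Theorem~\ref{UFDLOCTHM} holds for $q$ and this $E$. Its conclusion produces, for each $x \in K^n$, a $y_S \in R_S^n$ with $|q(x-y_S)|_S \le E$, that is, $E(q_S,x) \le E$; taking the supremum over $x$ gives $E(q_S) \le E$. Letting $E \downarrow E(q)$ yields $E(q_S) \le E(q)$.

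For~(b): invoking~(a)---or Theorem~\ref{UFDLOCTHM} directly with $E = 1$, which is legitimate because a Euclidean $q$ satisfies its hypothesis for $E = 1$---gives only $E(q_S) \le 1$, and this does not by itself force $q_S$ to be Euclidean (a form of Euclideanity $\le 1$ may be boundary-Euclidean, and if $q$ is isotropic one must also guarantee that one can land on a value with $|q(x-y_S)|_S > 0$). So I will instead rerun the computation in the proof of Theorem~\ref{UFDLOCTHM} with a pointwise bound. Let $x \in K^n \setminus R_S^n$; write $x = a/b$ with $a \in R^n$, $b \in R^{\bullet}$, and factor $b = s_b b'$ with $s_b \in S$ and $b'$ prime to $S$, so that $|b|_S = |b'|_S = |b'|$ by Remark~3.1. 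The crucial point is that $\tfrac{a}{b'} \notin R^n$: if $\tfrac{a}{b'} = c \in R^n$ then $x = c/s_b \in R_S^n$, contrary to assumption. Hence the Euclidean hypothesis on $q$, applied at $\tfrac{a}{b'} \in K^n \setminus R^n$, yields $y \in R^n$ with $0 < |q(\tfrac{a}{b'} - y)| < 1$. Put $y_S = y/s_b \in R_S^n$; since $x - y_S = (a - b'y)/b$, the two bullets of Remark~3.1 give $|q(x-y_S)|_S = |q(a-b'y)|_S/|b|_S^2 \le |q(a-b'y)|/|b'|^2 = |q(\tfrac{a}{b'} - y)| < 1$, while $|q(x-y_S)|_S = 0$ would force $|q(\tfrac{a}{b'} - y)| = 0$, which is false. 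Hence $0 < |q(x-y_S)|_S < 1$, so $q_S$ is Euclidean; note that no anisotropy hypothesis was used.

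The only genuine obstacle is the mismatch between the uniform-constant hypothesis of Theorem~\ref{UFDLOCTHM} and the pointwise, strict ($<1$) Euclidean condition; it is dissolved by observing that the proof of that theorem actually converts a bound valid at the auxiliary point $\tfrac{a}{b'}$ into a bound at $x$, and that $x \notin R_S^n$ is precisely what guarantees $\tfrac{a}{b'} \notin R^n$, so that the Euclidean hypothesis is legitimately available there. Everything else is the same bookkeeping with Remark~3.1 already carried out in the proof of Theorem~\ref{UFDLOCTHM}.
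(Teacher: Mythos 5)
Your proof is correct and follows essentially the same route as the paper: part (a) is the same $\epsilon$-argument feeding Theorem \ref{UFDLOCTHM}, and part (b) reruns that theorem's proof with strict inequalities, which is precisely what the paper's one-line justification (``take $E=1$ and replace all inequalities with strict inequalities'') asserts. Your added care in (b) --- checking that $x \notin R_S^n$ forces $\frac{a}{b'} \notin R^n$ so the Euclidean hypothesis is available, and that positivity of $|q(x-y_S)|_S$ transfers --- simply makes explicit the details the paper leaves as ``verbatim,'' and is welcome since it also covers the isotropic case.
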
 
\begin{proof} a) By definition of the Euclideanity, for all $\epsilon > 0$ 
and all $x \in K^n$, there exists $y \in R^n$ such that $|q(x-y)| \leq E(q) + \epsilon$.  Therefore  Theorem \ref{UFDLOCTHM} applies with $E = E(q) + \epsilon$ to show that for all $x \in K$, there exists $y_S \in R_S$ 
with $|q(x-y_S)|_S \leq E(q) + \epsilon$, i.e., $E(q_S) \leq E(q) + \epsilon$.  
Since $\epsilon$ was arbitrary, we conclude $E(q_S) \leq E(q)$.  \\
b) If in the statement of Theorem \ref{UFDLOCTHM} we take $E = 1$ and replace 
all the inequalities with strict inequalities, the proof goes through verbatim.
\end{proof}    
\noindent
The rings of most interest to us are Hasse domains, which of course need not 
be UFDs but are always Dedekind domains.  Thus it will be useful to have Dedekind domain analogues of the previous discussion.
\\ \\
Let $R$ be a Dedekind domain endowed with an ideal norm $|\cdot |$.  Let $R'$ be 
an \textbf{overring} of $R$, i.e., a ring intermediate between $R$ and its 
fraction field $K$: let $\iota: R \hookrightarrow S$ be the inclusion map.  Then the induced map on spectra $\iota^*: \Spec R' \ra \Spec R$ is also an injection, 
and $S$ is completely determined by the image $W := \iota^*(\Spec R')$.  Namely \cite[Cor. 6.12]{Larsen-McCarthy}  \[R' = R_W := \bigcap_{\mathfrak{p} \in W} R_{\mathfrak{p}}. \]
This allows us to identify the monoid $\mathcal{I}(R_W)$ of ideals of $R_W$ as the free submonoid of the free monoid $\mathcal{I}(R)$ on the subset $W$ of $\Spec R$ and thus define an \textbf{overring ideal norm} $| \cdot |_W$ 
on $R_W$ as the composite map 
$\mathcal{I}(R_W) \ra \mathcal{I}(R) \stackrel{| \cdot |}{\ra} \Z^+$.  
\\ \\
Remark 3.1.2.: As above, we single out the following properties of $| \cdot |_W$: \\ \\
$\bullet$ Every ideal $I \in \mathcal{R}$ may be uniquely decomposed as 
$W_I I'$ where $W_I$ is divisible by the primes of $W$ and $I'$ is prime to 
$W$, and we have

\[ |I|_W = |W_I I'|_S = |I'|_S = |I'|. \]
$\bullet$ For all ideals $I$, $|I|_W \leq |I|$.  

\begin{thm}
\label{DEDLOCTHM}
Let $R$ be a Dedekind domain with fraction field $K$, $| \cdot |$  an ideal norm 
on $R$, $W \subset \Sigma_R$ and $R_W = \bigcap_{\mathfrak{p} \in W} R_{\mathfrak{p}}$ the corresponding overring.  Let $q(x) \in R[x]$ be a 
quadratic form, and suppose that $E \in \R^{> 0}$ is a constant such that 
for all $x \in K^n$, there exists $y \in R^n$ such that $|q(x-y)| \leq E$.  
Then for all $x \in K^n$, there exists $y_W \in R_W^n$ such that 
$|q(x-y_W)|_W \leq E$.
\end{thm}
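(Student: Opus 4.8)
The plan is to follow the proof of Theorem~\ref{UFDLOCTHM}, the only real issue being that over a general Dedekind domain a common denominator $b$ of a vector $x$ cannot be factored as a product of an ``$S$-part'' and a ``prime-to-$S$'' part: there need be no such \emph{elements}, only such \emph{ideals}. I would get around this by replacing that multiplicative splitting with an additive one. Let $R'_W := \bigcap_{\mathfrak{p}\in\Sigma_R\setminus W}R_{\mathfrak{p}}$ be the complementary overring, obtained from $R$ by inverting the primes of $W$; thus $R_W$ and $R'_W$ share no height one prime. The key observation is that $K = R_W + R'_W$: given $c=r/s$ with $r,s\in R^{\bullet}$, factor $(s) = \mathfrak{s}_W\mathfrak{s}'$ into its $W$-part and its prime-to-$W$ part, which are comaximal, write $1 = u+v$ with $u\in\mathfrak{s}_W$ and $v\in\mathfrak{s}'$, and read off from the valuations ($v_{\mathfrak{p}}(u)\ge v_{\mathfrak{p}}(s)$ for $\mathfrak{p}\in W$, and $v_{\mathfrak{p}}(v)\ge v_{\mathfrak{p}}(s)$ for $\mathfrak{p}\notin W$) that $cu = ru/s\in R_W$ and $cv = rv/s\in R'_W$. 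Applying this to each coordinate, any $x\in K^n$ can be written $x = \zeta + x'$ with $\zeta\in R_W^n$ and $x'\in (R'_W)^n$; that is, $x'$ has no pole at any prime outside $W$.

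With $x$ so decomposed, I would apply the hypothesis \emph{over $R$} to the vector $x'\in K^n$: there is $\eta\in R^n$ with $|q(x'-\eta)|\le E$. Set $y_W := \zeta + \eta$. Since $\zeta\in R_W^n$ and $\eta\in R^n\subseteq R_W^n$ we get $y_W\in R_W^n$, and $q(x-y_W) = q(x'-\eta)$. Now $c := q(x'-\eta)$ lies in $R'_W$: indeed $x'-\eta\in (R'_W)^n$ and $q$ has coefficients in $R\subseteq R'_W$, so $v_{\mathfrak{p}}(c)\ge 0$ for every $\mathfrak{p}\notin W$. For such $c$ the inequality $|c|_W\le|c|$ holds, which is the reasoning behind the second bullet of Remark 3.1.2 applied to the fractional ideal $(c)R$: writing $|c| = |c|_W\cdot\prod_{\mathfrak{p}\notin W}|\mathfrak{p}|^{v_{\mathfrak{p}}(c)}$, every factor on the right is $\ge 1$ because $v_{\mathfrak{p}}(c)\ge 0$ there. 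Hence $|q(x-y_W)|_W = |c|_W\le|c| = |q(x'-\eta)|\le E$, which is exactly the conclusion.

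The step I expect to be the main obstacle is precisely recognizing that the correct substitute for UFD factorization is the additive decomposition $K = R_W + R'_W$ — equivalently, that one should clear only the ``denominators of $x$ away from $W$'', absorbing them into $R_W$ and leaving a vector that is integral outside $W$; once that is done, the rest is the same valuation bookkeeping as in the UFD case, and the argument uses nothing about the class group of $R$ or of $R_W$. Finally, exactly as after Theorem~\ref{UFDLOCTHM}, this yields for free the companion statements that the Euclideanity can only drop on passing to $(R_W,|\cdot|_W)$ and that Euclideanness is inherited by $(R_W,|\cdot|_W)$: for the first, apply the theorem with $E = E(q)+\epsilon$ and let $\epsilon\to 0^{+}$; for the second, rerun the proof with every inequality made strict.
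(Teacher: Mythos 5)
Your argument is correct, but it takes a genuinely different route from the paper's -- and is in fact more careful. The paper's proof of Theorem \ref{DEDLOCTHM} is a brief reduction to Theorem \ref{UFDLOCTHM}: it asserts that the only new point is to factor the denominator $b \in R^{\bullet}$ as $b = w_b b'$ with $w_b$ divisible only by primes of $W$ and $b'$ prime to $W$, and claims this follows from weak approximation (or the Chinese Remainder Theorem) applied to the primes of $W$ dividing $(b)$. Your suspicion about such element-level multiplicative splittings is exactly right: that factorization forces $(w_b)$ to equal the $W$-part of $(b)$, hence requires that ideal to be principal, which can fail when the class group is nontrivial (e.g.\ $R = \Z[\sqrt{-5}]$, $b = 1+\sqrt{-5}$, $W = \{(2,1+\sqrt{-5})\}$). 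Your additive substitute $K = R_W + R'_W$, obtained from precisely the comaximality/CRT input that \emph{is} available ($\mathfrak{s}_W + \mathfrak{s}' = R$ for the $W$-part and prime-to-$W$ part of the denominator ideal), sidesteps this entirely: you apply the hypothesis over $R$ to the component $x'$ that is integral outside $W$, and the inequality $|c|_W \leq |c|$ for $c = q(x'-\eta) \in R'_W$ is the correct extension of the second bullet of Remark 3.1.2 to fractional ideals -- an extension already implicit in the statement of the theorem, since $q(x-y_W)$ need not lie in $R_W$ (the paper's UFD argument instead clears denominators so that all norms are taken of elements of $R$). So your proof not only establishes the theorem but repairs the unjustified step in the paper's sketch; the paper's version is the more direct translation of the UFD proof when the relevant $W$-parts happen to be principal (e.g.\ over a PID). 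Your closing remarks recover the paper's corollary exactly as the paper does: take $E = E(q)+\epsilon$ and let $\epsilon \to 0$ for the Euclideanity bound, and rerun the proof with strict inequalities for Euclideanness.
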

\begin{proof} The argument is similar to that of Theorem \ref{UFDLOCTHM}.  The only point which requires additional attention is the existence of a 
decomposition of $b \in R^{\bullet}$ as $b = w_b b'$ with $w_b$ divisible only 
prime ideals in $W$ and $b'$ prime to $W$.  But this follows by
weak approximation (or the Chinese Remainder Theorem) applied to the finite set of prime ideals $\mathfrak{p} \in W$ which appear in the 
prime factorization of $(b)$.  
\end{proof}
\noindent
Also as before, we deduce the following result.

\begin{cor}
Retain the notation of Theorem \ref{DEDLOCTHM} and write $q_W$ for $q$ viewed as a quadratic form on the ideal normed ring $(R_W,| \cdot |_W)$.  Then: \\
a) $E(q_W) \leq E(q)$.  \\
b) If $q$ is Euclidean, so is $q_W$.
\end{cor}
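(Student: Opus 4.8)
The plan is to mimic exactly the proof of the Corollary following Theorem \ref{UFDLOCTHM}, using Theorem \ref{DEDLOCTHM} in place of Theorem \ref{UFDLOCTHM}. Since Theorem \ref{DEDLOCTHM} is the Dedekind-domain analogue of Theorem \ref{UFDLOCTHM} and has the identical input/output structure (a constant $E$ such that every $x \in K^n$ can be $R$-approximated to within $E$ yields the same $E$-approximation over $R_W$ in the overring norm $|\cdot|_W$), the deduction of the statements about Euclideanity should be purely formal.

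For part a), I would argue as follows. Fix $\epsilon > 0$. By the definition of $E(q)$ as a supremum of infima, for every $x \in K^n$ there exists $y \in R^n$ with $|q(x-y)| \leq E(q) + \epsilon$. Thus the hypothesis of Theorem \ref{DEDLOCTHM} is met with the constant $E = E(q) + \epsilon$, and the theorem yields: for every $x \in K^n$ there is $y_W \in R_W^n$ with $|q(x-y_W)|_W \leq E(q) + \epsilon$. Hence $E(q_W,x) \leq E(q) + \epsilon$ for all $x$, so $E(q_W) = \sup_x E(q_W,x) \leq E(q) + \epsilon$. Letting $\epsilon \to 0$ gives $E(q_W) \leq E(q)$.

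For part b), suppose $q$ is Euclidean. If $q$ is anisotropic, then by the Remark in $\S 2.1$ being Euclidean is equivalent to: for all $x \in K^n$ there exists $y \in R^n$ with $|q(x-y)| < 1$. One then reruns the proof of Theorem \ref{DEDLOCTHM} with $E = 1$ and all non-strict inequalities replaced by strict inequalities — exactly as in the proof of part b) of the Corollary to Theorem \ref{UFDLOCTHM} — to conclude that for all $x \in K^n$ there is $y_W \in R_W^n$ with $|q(x-y_W)|_W < 1$, i.e. $q_W$ is Euclidean. (Note $q_W$ is anisotropic over $R_W$ since it is anisotropic over $K$.) Alternatively, and more cleanly, one observes that the strict-inequality version of Theorem \ref{DEDLOCTHM} holds by the same proof, and a Euclidean $q$ satisfies its hypothesis with $E=1$.

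I do not anticipate a genuine obstacle here: the only subtlety is the bookkeeping point already flagged in the proof of Theorem \ref{DEDLOCTHM}, namely that an element $b \in R^{\bullet}$ factors as $b = w_b b'$ with $w_b$ supported on primes of $W$ and $b'$ prime to $W$, which is handled by the Chinese Remainder Theorem / weak approximation; but since Theorem \ref{DEDLOCTHM} is being invoked as a black box, even that does not reappear. The proof is therefore just: "The argument is identical to that of the Corollary to Theorem \ref{UFDLOCTHM}, using Theorem \ref{DEDLOCTHM} in place of Theorem \ref{UFDLOCTHM}," optionally spelled out as in the two paragraphs above.
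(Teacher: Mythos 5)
Your proposal is correct and matches the paper's intent exactly: the paper gives no separate argument for this corollary, simply noting ``also as before, we deduce the following result,'' i.e., one repeats the proof of the corollary to Theorem \ref{UFDLOCTHM} with Theorem \ref{DEDLOCTHM} in place of Theorem \ref{UFDLOCTHM}. Your $\epsilon$-argument for part a) and the strict-inequality rerun for part b) are precisely that argument.
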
 

\subsection{Localization and Completion of ADC-forms}

\begin{thm}
\label{ADCLOCTHM}
Let $R$ be a domain, $S \subset R^{\bullet}$ a saturated multiplicatively closed subset and $R_S = S^{-1} R$ the 
localized domain.  If a quadratic form $q(x) \in R[x]$ is ADC, then $q$ viewed as a quadratic form over 
$R_S$ is ADC.
\end{thm}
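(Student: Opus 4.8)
The plan is to take an element $d \in R_S$ represented over $K$ by $q$, produce a $K$-representation of a suitably scaled element $\tilde{d} \in R$, invoke the ADC hypothesis over $R$, and then descend back to $R_S$. First I would write $d = \frac{c}{s}$ with $c \in R$ and $s \in S$; since $S$ is saturated and multiplicatively closed, multiplying numerator and denominator by $s$ we may arrange $d = \frac{a}{s^2}$ for some $a \in R$ and $s \in S$ (this rewriting is the analogue of the ``clearing denominators'' trick used in Theorems \ref{UFDLOCTHM} and \ref{DEDLOCTHM}, and it is convenient because $q$ is homogeneous of degree $2$). Suppose there is $x \in K^n$ with $q(x) = d = \frac{a}{s^2}$. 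Then $q(sx) = s^2 q(x) = a$, and $sx \in K^n$, so $a \in R$ is $K$-represented by $q$.

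Now apply the hypothesis that $q$ is ADC over $R$: there exists $y \in R^n$ with $q(y) = a$. Then $q\left(\frac{y}{s}\right) = \frac{a}{s^2} = d$, and since $\frac{y}{s} \in R_S^n$, this exhibits $d$ as $R_S$-represented by $q$. This establishes the ADC property for $q$ over $R_S$.

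I expect the only point requiring care is the reduction of an arbitrary denominator from $S$ to a square denominator from $S$; this uses that $S$ is closed under multiplication (so $s^2 \in S$ when $s \in S$), which is part of the hypothesis, so there is no real obstacle. One should also note at the outset that a quadratic form over $R$ base-extends to a genuine quadratic form over $R_S$ with the same coefficients, and that a nondegenerate form over $K$ stays nondegenerate, so the standing hypotheses on $q$ are preserved under localization; and that if $q$ is anisotropic over $R$ it remains anisotropic over $R_S \subseteq K$, so isotropy issues do not arise. In short, the proof is a direct homogeneity-plus-scaling argument, and unlike the localization theorems for Euclideanity, here no saturation of $S$ beyond multiplicative closure and no structure theory of $R$ is needed — the statement is purely about representability.
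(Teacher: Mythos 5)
Your proposal is correct and is essentially the paper's own argument: both clear the denominator using homogeneity of degree $2$ (your $a = cs$ with $q(sx)=a$ is the paper's $q(sx)=sa$), apply the ADC hypothesis over $R$, and divide by $s$ to descend to $R_S$. Your observation that saturation of $S$ plays no role here is also consistent with the paper's proof, which never uses it.
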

\begin{proof}
Let $d \in R_S^{\bullet}$ be $K$-represented by $q_S$, i.e., there exists $x \in K^n$ such that $q(x) = d$.  We may 
write $d = \frac{a}{s}$ with $s \in S$.  If $x = (x_1,\ldots,x_n)$, then by $sx$ we mean $(sx_1,\ldots,sx_n)$.  Thus 
$q(sx) = s^2 q(x) = sa \in R$.  Since $q$ is ADC over $R$, there exists $y \in R^n$ such that $q(y) = sa$.  But then 
$s^{-1} y \in R_S^n$ and $q(s^{-1} y) = \frac{a}{s}$.
\end{proof}

\begin{cor}
\label{ADCLOCCOR}
Let $R$ be a Dedekind domain with fraction field $K$, let $v: K^{\bullet} \ra \Z$ be a nontrivial discrete valuation 
which is ``$R$-regular'' in the sense that $R$ is contained in the valuation ring $v^{-1}(\N) \cup \{0\}$.  Let $K_v$ 
be the completion of $K$ with respect to $v$ and $R_v$ its valuation ring.  Suppose $q \in R[x]$ is an ADC form.  
Then the base extension of $q$ to $R_v$ is an ADC-form.
\end{cor}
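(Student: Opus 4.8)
The plan is to factor $R \hookrightarrow R_v$ through a localization of $R$, dispose of the localization step with Theorem \ref{ADCLOCTHM}, and then upgrade the ADC property from a discrete valuation ring to its completion by an approximation argument on square classes.

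First I would pin down the center of $v$. Since $v$ is nontrivial and $R$-regular, $\mathfrak{p} := \{x \in R : v(x) > 0\}$ is a nonzero, hence height-one, prime of the Dedekind domain $R$; the set $S := R \setminus \mathfrak{p}$ is saturated and multiplicatively closed; $R_S = R_{\mathfrak{p}}$ is a DVR; and since every element of $S$ has $v$-value $0$, the valuation ring $v^{-1}(\mathbb{N}) \cup \{0\}$ contains $R_{\mathfrak{p}}$ and, being an overring of the rank-one valuation ring $R_{\mathfrak{p}}$ other than $K$, equals $R_{\mathfrak{p}}$. Hence $v$ is equivalent to the $\mathfrak{p}$-adic valuation, so $K_v$ is the $\mathfrak{p}$-adic completion of $K$, $R_v$ is the completion of the DVR $R_{\mathfrak{p}}$, and $K \cap R_v = R_{\mathfrak{p}}$. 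By Theorem \ref{ADCLOCTHM}, $q$ is ADC over $R_{\mathfrak{p}}$, so it suffices to prove: if $\mathcal{O}$ is a DVR with fraction field $k$ and completion $\hat{\mathcal{O}} \subset \hat{k}$, then $q$ ADC over $\mathcal{O}$ implies $q$ ADC over $\hat{\mathcal{O}}$.

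For this last step, let $d \in \hat{\mathcal{O}}$ be represented by $q$ over $\hat{k}$; the case $d = 0$ is trivial, so take $d \neq 0$ and $\xi \in \hat{k}^n$ with $q(\xi) = d$. Since $k^n$ is dense in $\hat{k}^n$ and $q$ is continuous, I would choose $x_0 \in k^n$ so close to $\xi$ that $q(x_0) \neq 0$ and $q(x_0)/d \in (\hat{\mathcal{O}}^{\times})^2$ — possible because the squares in $\hat{\mathcal{O}}^{\times}$ contain a neighbourhood $1 + \mathfrak{m}^N$ of $1$, where $\mathfrak{m}$ is the maximal ideal of $\hat{\mathcal{O}}$ (Hensel's lemma; such an exponent $N$ exists in every residue characteristic, though it must exceed $v(4)$ when that is positive). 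Write $q(x_0)/d = \lambda^2$ with $\lambda \in \hat{\mathcal{O}}^{\times}$. Then $d_0 := q(x_0)$ lies in $\hat{\mathcal{O}}$, and also in $k$ since $x_0 \in k^n$ and $q$ has coefficients in $\mathcal{O}$; hence $d_0 \in k \cap \hat{\mathcal{O}} = \mathcal{O}$. Since $d_0$ is $k$-represented by $q$ and $q$ is ADC over $\mathcal{O}$, there is $y_0 \in \mathcal{O}^n$ with $q(y_0) = d_0$, and then $d = d_0/\lambda^2 = q(\lambda^{-1} y_0)$ with $\lambda^{-1} y_0 \in \hat{\mathcal{O}}^n$. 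So $q$ represents $d$ over $\hat{\mathcal{O}} = R_v$, as needed.

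I expect the crux to be the approximation step: knowing that a small perturbation of $d$ — in particular a value $q(x_0)$ with $x_0$ a good rational approximation to $\xi$ — stays in the same square class as $d$, so that the solution $y_0$ produced over $\mathcal{O}$ can be rescaled by the unit $\lambda^{-1}$ to land on $d$ exactly. This is where completeness of $R_v$ enters; the remainder is routine bookkeeping with localization, completion, and the identity $K \cap R_v = R_{\mathfrak{p}}$, together with Theorem \ref{ADCLOCTHM} for the localization.
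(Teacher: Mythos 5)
Your proposal is correct and follows essentially the same route as the paper's proof: identify $v$ with the $\mathfrak{p}$-adic valuation, invoke Theorem \ref{ADCLOCTHM} to get ADC over $R_{\mathfrak{p}}$, then approximate the $K_v$-representation by a $K$-rational one close enough that the ratio of represented values is a square unit (openness of $R_v^{\times 2}$), and rescale the resulting integral solution by that unit. The only differences are cosmetic, e.g.\ your use of $K \cap R_v = R_{\mathfrak{p}}$ to see that the approximating value lies in $R_{\mathfrak{p}}$, where the paper simply chooses $x$ so that $q(x) \in R_S$ directly.
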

\begin{proof} Under the hypotheses of the theorem, $v = v_{\mathfrak{p}}$ for a nonzero prime ideal $\mathfrak{p}$ of $R$.  Let $S = R \setminus \mathfrak{p}$, and put $R_S = S^{-1} R$.  By the previous 
theorem, the extension of $q$ to $R_S$ is an ADC form.  Now suppose $D \in R_v^{\bullet}$ is such that there exists 
$X \in K_v^n$ with $q(X) = D$.  We may choose $x \in K^n$ which is sufficiently $v$-adically close to $X$ so that 
$q(x) = d \in R_S$ and $\frac{D}{d} = u_d^2$ for some $u_d \in R_v^{\times}$.  (This is possible because: $R_S^n$ is 
dense in $R_v^n$, $q$, being a polynomial function, is continuous for the $v$-adic topology, and $R_v^{\times 2}$ is an open subgroup of $R_v^{\bullet}$: e.g. \cite[Thm 3.39]{Gerstein}.)  Since $q$ is ADC over $R_S$, there exists $y \in R_S^n$ such that $q(y) = d$.  Thus $q(u_d y) = u_d^2 d = D$, showing that $D$ is $R_v$-represented by $q$.
\end{proof}  

\section{CDVRs and Hasse Domains}

\subsection{Basic defintions} \textbf{} \\ \\ \noindent
Let $(R,v)$ be a discrete valuation ring (DVR) with fraction field $K$ and residue field $k$.  As usual, we require that the characteristic of $K$ be different from $2$; however, although it is invariably more troublesome, we certainly must admit the case in which $k$ has characteristic $2$: such DVRs are called \textbf{dyadic}.  
We will be especially interested in the case in which $R$ is complete, a \textbf{CDVR}.
\\ \\
A \textbf{Hasse domain} is the ring of $S$-integers in a number field $K$ or the coordinate ring of a regular, integral algebraic curve 
over a finite field $k = \mathbb{F}_q$.  (The terminology is taken from \cite{OMeara}.)  In particular, a Hasse domain is a Dedekind finite quotient 
domain.
\\ \\
Let $\Sigma_K$ denote the set of all places of $K$, including Archimedean ones in the number field case.  Let $\Sigma_R = \Sigma_K \setminus S$ denote the subset of $\Sigma_K$ consisting of places which correspond to maximal ideals of $R$; these places will be called \emph{finite}.  The completion $R_v$ of a Hasse domain $R$ at $v \in \Sigma_R$ is a CDVR with finite residue field.  
\\ \\
If $R$ is a Hasse domain and $\Lambda$ is a quadratic $R$-lattice in the quadratic space $(V,q)$, then to each $v \in \Sigma_R$ we may attach 
the local lattice $\Lambda_v = \Lambda \otimes_R R_v$.  Being a finitely 
generated torsion-free module over the PID $R_v$, $\Lambda_v$ is necesssarily 
free.  In particular, we may define $\delta_v$, the valuation of the discriminant over $R_v$ and then the global discriminant may be defined as 
the ideal $\Delta(\Lambda) = \prod_{v \in \Sigma_R} \mathfrak{p}_v^{\delta_v}$. 
\begin{lemma}
\label{EASYMAXLEMMA}
\textbf{} \\ 
a) The $R$-lattice $\Lambda$ is maximal iff $\Lambda_v$ is a maximal $R_v$-lattice for all $v \in \Sigma_R$.  \\
b) For any \emph{nondyadic} place $v$ such that $\delta_v(\Lambda) \leq 1$, 
the lattice $\Lambda_v$ is $R_v$-maximal.
\end{lemma}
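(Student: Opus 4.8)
The plan is to prove both parts by reducing global statements about lattice maximality to local ones, exploiting the fact that over a Hasse domain $R$ a quadratic $R$-lattice $\Lambda$ in $(V,q)$ is the intersection $\Lambda = \bigcap_{v \in \Sigma_R} (\Lambda_v \cap V)$ of its localizations, and more importantly that two $R$-lattices $\Lambda \subseteq \Lambda'$ coincide if and only if $\Lambda_v = \Lambda'_v$ for all $v$. For part a), if $\Lambda_v$ fails to be $R_v$-maximal for some $v$, I would pick a strictly larger quadratic $R_v$-lattice $M_v \supsetneq \Lambda_v$ and ``glue'' it back to a global lattice by setting $\Lambda' = \Lambda'_v \cap V$ where $\Lambda'_w = \Lambda_w$ for $w \neq v$ and $\Lambda'_v = M_v$; one checks $q(\Lambda') \subseteq R$ since this can be verified locally and $q(M_v) \subseteq R_v$, so $\Lambda'$ is a quadratic $R$-lattice strictly containing $\Lambda$. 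Conversely, if $\Lambda \subsetneq \Lambda'$ is a strictly larger quadratic $R$-lattice, then $\Lambda_v \subsetneq \Lambda'_v$ for at least one $v$ (else the localizations all agree and $\Lambda = \Lambda'$), and $\Lambda'_v$ is then a strictly larger quadratic $R_v$-lattice, contradicting $R_v$-maximality. The one point deserving care is the existence, for a given local lattice at $v$, of a global lattice inducing it at $v$ and agreeing with $\Lambda$ elsewhere: this is standard for Dedekind domains (local lattices can be patched, and all but finitely many localizations of $\Lambda'$ must equal those of $\Lambda$), and I would cite the relevant fact from \cite{OMeara} or \cite{Gerstein}.

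For part b), by part a) it suffices to show that a quadratic $R_v$-lattice over a nondyadic CDVR $R_v$ with discriminant valuation $\delta_v \leq 1$ is maximal. The plan is to invoke the structure theory of quadratic lattices over a nondyadic local ring: such a lattice admits an orthogonal basis $e_1,\ldots,e_n$ with $q(e_i) = u_i \pi^{a_i}$, $u_i \in R_v^\times$, and $0 \leq a_1 \leq \cdots \leq a_n$, where $\delta_v = \sum_i a_i$ modulo the (even) contribution of squares of units. The hypothesis $\delta_v \leq 1$ then forces $a_1 = \cdots = a_{n-1} = 0$ and $a_n \in \{0,1\}$; in either case one argues directly that no strictly larger lattice can be quadratic. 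If $\Lambda_v$ were properly contained in a quadratic lattice $M_v$, pick $x \in M_v \setminus \Lambda_v$; writing $x = \sum c_i e_i$ with $c_i \in K_v$, not all in $R_v$, the required bound $q(x) = \sum u_i \pi^{a_i} c_i^2 \in R_v$ together with the non-Archimedean valuation and the constraint on the $a_i$ gives a contradiction (if all $a_i = 0$ the lattice is unimodular, hence maximal by a well-known result; if $a_n = 1$ and the rest are $0$, one checks the $c_i$ with $i<n$ must lie in $R_v$ and then $c_n$ as well).

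The main obstacle I expect is organizing part b) cleanly: the diagonalization of quadratic forms over a nondyadic local ring and the precise relationship between the exponents $a_i$ and the discriminant valuation $\delta_v$ need to be invoked correctly, and the case analysis ($\delta_v = 0$ versus $\delta_v = 1$) must be handled uniformly. In particular the unimodular case ($\delta_v = 0$) is the classical statement that unimodular lattices over a CDVR are maximal, which I would quote; the case $\delta_v = 1$ is where the real (though short) argument lies, and I would phrase it as a direct verification that adjoining any vector outside $\Lambda_v$ produces a value of $q$ with negative valuation. Throughout I would lean on the nondyadic hypothesis to guarantee orthogonal diagonalization, flagging that the dyadic case genuinely fails and is deferred.
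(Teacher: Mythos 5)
Your part a) is fine: the paper itself handles both parts by citation to O'Meara (\S 82K and 82:19), and your patching argument for a) is the standard proof of the fact cited there, modulo the small bookkeeping point that the paper's $\Lambda_v$ is the completion $\Lambda \otimes_R R_v$ rather than the localization inside $V$, which the usual dictionary between $R_{\mathfrak{p}}$-lattices and $R_v$-lattices takes care of.

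The gap is in part b), in the step where you treat the case $a_1 = \cdots = a_{n-1} = 0$, $a_n \in \{0,1\}$. You claim that from $q(x) = \sum_i u_i \pi^{a_i} c_i^2 \in R_v$ alone, the non-Archimedean valuation forces $c_1,\ldots,c_{n-1} \in R_v$ and then $c_n \in R_v$. That inference is false when the unit part of the form is isotropic, because the unit-coefficient terms can cancel: over $R_v = \Z_5$ with $q = x_1^2 + x_2^2 + 5x_3^2$ and $x = (\frac{3}{5}, \frac{4}{5}, 0)$ one has $q(x) = 1 \in \Z_5$ while $c_1, c_2 \notin \Z_5$. (The lemma is not contradicted: $q(x + e_1) = \frac{16}{5}$, so $\Lambda_v + R_v x$ is not a quadratic lattice.) The correct argument must use integrality of $q$ on the whole putative larger lattice $M_v$, not just on the single vector $x$: since $v$ is nondyadic, $q(M_v) \subseteq R_v$ gives $b(x,e_i) := \frac{1}{2}\bigl(q(x+e_i) - q(x) - q(e_i)\bigr) \in R_v$, i.e. $u_i \pi^{a_i} c_i \in R_v$, hence $v(c_i) \geq -a_i$; thus $c_i \in R_v$ for $i < n$ and $v(c_n) \geq -1$, and then $q(x) \in R_v$ forces $1 + 2v(c_n) \geq 0$, i.e. $c_n \in R_v$, contradicting $x \notin \Lambda_v$. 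This one computation also subsumes your unimodular case, and it is exactly where the nondyadic hypothesis enters: over $\Z_2$ the unimodular lattice $\langle 1, 3 \rangle$ is properly contained in the quadratic lattice obtained by adjoining $\frac{1}{2}(e_1 + e_2)$, so an unqualified appeal to ``unimodular implies maximal'' would be wrong without the nondyadic proviso. With this repair your outline is a correct proof of the fact the paper quotes as O'Meara 82:19.
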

\begin{proof} For part a), see \cite[$\S$ 82K]{OMeara}.  For part b), see 
\cite[82:19]{OMeara}.
\end{proof}

\subsection{Classification of Euclidean forms over CDVRs} \textbf{} \\ \\ \noindent
In this section $R$ is a CDVR with fraction field $K$ of characteristic different from $2$, endowed with the norm $| \cdot |_a$ (for some $a \geq 2$) of Example 1.3.  In this 
setting we can give a very clean characterization of Euclidean forms.

\begin{thm}
\label{EUCEQUALSMAXTHM}
A quadratic form over a complete discrete valuation domain is Euclidean for the 
canonical norm iff the corresponding quadratic lattice is maximal.
\end{thm}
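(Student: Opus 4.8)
The plan is to prove both implications, using the structural facts about CDVRs and the results already established. The easier direction is "Euclidean $\Rightarrow$ maximal": this is simply Proposition \ref{EUCIMPLIESMAXPROP}, which holds over any normed ring and which I would invoke directly. So the substance of the theorem is the converse: if the quadratic $R$-lattice $\Lambda = R^n$ attached to $q$ is maximal, then $q$ is Euclidean for the canonical norm $|\cdot|_a$ (recall by Example 1.3 that a CDVR has a unique norm up to equivalence, so the choice of $a$ is immaterial).

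For the converse, I would argue by contradiction: suppose $\Lambda$ is maximal but $q$ is not Euclidean, so $E(q) \geq 1$, meaning there exists $x \in K^n \setminus R^n$ with $E(q,x) = \inf_{y \in R^n} |q(x-y)| \geq 1$. Replacing $x$ by $x - y$ for a suitable $y \in R^n$, I may normalize so that $x$ has at least one coordinate of negative valuation; let $-m = \min_i v(x_i) < 0$ be the most negative coordinate valuation, and write $\pi$ for a uniformizer. The idea is that maximality of $\Lambda$ should be violated by adjoining a suitable vector built from $x$: if $q(x - y)$ had positive valuation for some $y$, i.e. $|q(x-y)|_a < 1$, then $\Lambda' = \Lambda + R(x-y)$ would be a strictly larger quadratic $R$-lattice, contradicting maximality. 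So the real content is: maximality forces the existence of such a $y$. Here I would want to reduce to the case $m = 1$ (a single uniformizer in the denominator) by a descent: if $x$ has a coordinate of valuation $-m$ with $m \geq 2$, one shows using maximality that $\pi^{m-1}\cdot(\text{some translate of }x)$ must already lie in a lattice strictly containing $\Lambda$ unless one can improve, iterating down to $m=1$. Then for $m=1$, the vector $v := \pi x$ has all coordinates in $R$, is not in $\pi R^n$, and $q(v) = \pi^2 q(x) \in \pi^2 R$; considering the lattice $\Lambda + R x = \Lambda + R(v/\pi)$, one has $q(v/\pi) = q(x)$, and one needs $q(x - y) \in R$ for all $y \in R^n$ — equivalently $q(x) \in R$ and $x \cdot y \in \tfrac12 R$, using the bilinear form $x\cdot y = \tfrac12(q(x+y)-q(x)-q(y))$ as in the proof of Theorem \ref{MAINTHM}. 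Maximality of $\Lambda$ precisely says $\Lambda + Rx$ fails to be a quadratic lattice, i.e. $q(\Lambda + Rx) \not\subseteq R$, which (since $R$ is a DVR, hence the obstruction is visible in a single coordinate/congruence) yields $|q(x - y)|_a \geq 1$ is impossible to maintain for \emph{all} $y$ once we've pushed $m$ down to its minimum — giving the contradiction.

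The main obstacle I anticipate is handling the \textbf{dyadic case} cleanly. When $k$ has characteristic $2$, the relation between $q(\Lambda)$, the symmetric bilinear form $x \cdot y$, and the denominators $2(x\cdot y) \in R$ is delicate: the factor of $2$ is now in the maximal ideal, so the naive argument "$q(x - y) = q(x) - 2(x\cdot y) + q(y)$ and the cross term is the problem" must be examined with care, and one may need to invoke the local structure theory of quadratic $R_v$-lattices over dyadic CDVRs (e.g. the results of O'Meara $\S 93$, or at least Lemma \ref{EASYMAXLEMMA}). In the nondyadic case the argument should go through essentially as a clean valuation-theoretic descent as sketched above; I would present that case first and in full, then address the dyadic case, possibly by reducing to a normal form for maximal lattices and checking Euclideanity directly on each Jordan/fundamental block. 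If a uniform treatment proves awkward, an acceptable fallback is to diagonalize (nondyadic) or reduce to binary and unary pieces (dyadic) and verify the equivalence blockwise, since both "Euclidean" and "maximal" interact reasonably with orthogonal decompositions that are adapted to the valuation.
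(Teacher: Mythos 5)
Your instinct for the hard direction -- that a failure of the Euclidean condition at some $x \in K^n \setminus R^n$ should let one enlarge the lattice $R^n$, contradicting maximality -- is the right one, but as written the key step is both inverted and left unproved. First, the norm convention: with $|t|_a = a^{v(t)}$ one has $|t|_a < 1 \iff v(t) < 0 \iff t \notin R$, so ``$q(x-y)$ has positive valuation, i.e.\ $|q(x-y)|_a < 1$'' is a contradiction in terms, and the existence of a $y$ with $0 < |q(x-y)|_a < 1$ is exactly the Euclidean conclusion you are trying to reach, not something that conflicts with maximality. Second, the quantifier: knowing $q(x-y) \in R$ for a \emph{single} $y$ does not make $\Lambda + R(x-y)$ a quadratic lattice, because the cross terms $2\lambda \cdot (x-y)$ for $\lambda \in R^n$ must also lie in $R$; so the enlargement claim as you state it fails (e.g.\ $q = x_1x_2$, $x = (\pi^{-1},\pi)$ has $q(x) \in R$ but $R^2 + Rx$ is not a quadratic lattice). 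What is true -- and what you only gesture at with ``$q(x)\in R$ and $x\cdot y\in \tfrac12 R$'' -- is this: failure of the Euclidean property at $x$ means $q(x-y) \in R$ for \emph{all} $y \in R^n$ (including the case $q(x-y)=0$); then $q(x) \in R$ and $2(x\cdot \lambda) = q(x) + q(\lambda) - q(x-\lambda) \in R$ for every $\lambda \in R^n$, whence $q(\lambda + r x) = q(\lambda) + r\bigl(2(\lambda\cdot x)\bigr) + r^2 q(x) \in R$ for all $\lambda \in R^n$, $r \in R$, so $R^n + Rx$ is a strictly larger quadratic lattice and maximality is violated. Once the quantifiers are fixed, no descent on the denominator exponent $m$ is needed, no division by $2$ ever occurs (so the dyadic case causes no difficulty), and completeness is not used. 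Your unresolved dyadic worry, with a fallback to O'Meara or a blockwise Jordan analysis that you do not carry out, is therefore a genuine gap in the proposal as it stands, not an unavoidable obstacle.

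For comparison, the paper's proof is genuinely different and does lean on completeness: for anisotropic $q$ it invokes Eichler's Maximal Lattice Theorem, by which the unique maximal lattice is $\{x \in K^n : q(x) \in R\}$, so maximality of $R^n$ gives $|q(x-0)|_a < 1$ for every $x \notin R^n$; in the general case it uses Theorem \ref{SHIMURA29.8} to write a maximal lattice as $\bigoplus_{i=1}^r \mathbb{H}_R \oplus \Lambda'$ with $\Lambda'$ anisotropic, and then performs an explicit valuation case analysis on the hyperbolic coordinates. Your proposed blockwise fallback is close in spirit to that route, but it would still require those structure theorems to be cited and the cases to be worked; the corrected lattice-enlargement argument sketched above avoids them entirely and handles dyadic and nondyadic $R$ uniformly.
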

\noindent
For the proof we require the following preliminary results.

\begin{thm}(Eichler's Maximal Lattice Theorem) Let $q$ be an anisotropic quadratic form over a complete discrete valuation field $K$ with valuation ring $R$.  Then there is a unique maximal $R$-lattice for $q$, namely 
\[ \Lambda = \{x \in K^n \ | \ q(x) \in R \} . \]
\end{thm}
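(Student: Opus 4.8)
The plan is to prove two things: first, that $\Lambda = \{x \in K^n \mid q(x) \in R\}$ is in fact an $R$-lattice (i.e., a finitely generated $R$-submodule spanning $K^n$), and second, that it is the unique maximal quadratic $R$-lattice. The containment of any quadratic $R$-lattice $M$ in $\Lambda$ is immediate from the definition — $q(M) \subset R$ forces $M \subset \Lambda$ — so the real content is that $\Lambda$ is itself an $R$-module closed under the quadratic form, and that it is finitely generated. Once both of these are established, maximality and uniqueness follow: any quadratic $R$-lattice is contained in $\Lambda$, and $\Lambda$ itself is a quadratic $R$-lattice (by construction $q(\Lambda) \subset R$), so $\Lambda$ is the unique maximal one.

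The first and main obstacle is showing that $\Lambda$ is closed under addition; this is exactly where anisotropy and completeness enter, and it is the heart of Eichler's argument. Given $x, y \in \Lambda$, write the associated bilinear form $b(x,y) = \tfrac{1}{2}(q(x+y) - q(x) - q(y))$, so that $q(x+y) = q(x) + q(y) + 2b(x,y)$; thus $x + y \in \Lambda$ iff $2b(x,y) \in R$. Suppose not: then $v(b(x,y)) = -m < 0$ for some $m \geq 1$ (in the dyadic case one must track the factor of $2$, but the structure of the argument is the same). First I would reduce to the plane spanned by $x$ and $y$, on which $q$ restricts to an anisotropic binary form, and then exploit completeness: the idea is that if $b(x,y)$ is too large relative to $q(x)$ and $q(y)$, one can solve a quadratic equation $q(x + ty) = 0$ (or $q(\lambda x + \mu y) = 0$) for $t \in K$ using Hensel's lemma / successive approximation, contradicting anisotropy. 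Concretely, $q(x + ty) = q(x) + 2t\,b(x,y) + t^2 q(y)$ is a quadratic in $t$ whose discriminant is $4b(x,y)^2 - 4q(x)q(y)$; since $v(b(x,y)^2) = -2m$ while $v(q(x)q(y)) \geq 0$, the discriminant is a unit times an even power of the uniformizer, hence a square in the complete field $K$, so the equation has a root $t \in K$ — and then $(x + ty)$, suitably scaled to clear denominators, is a nonzero isotropic vector. This contradiction shows $2b(x,y) \in R$, hence $x + y \in \Lambda$. Stability under scaling by $R$ is trivial ($q(rx) = r^2 q(x)$), so $\Lambda$ is an $R$-module.

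Second, I would check that $\Lambda$ spans $K^n$ and is finitely generated. Spanning is clear: for any $x \in K^n$, a high enough power $\pi^k x$ satisfies $q(\pi^k x) = \pi^{2k} q(x) \in R$. Finite generation follows because $R$ is a DVR (hence Noetherian and a PID) and $\Lambda$ is sandwiched between two free lattices: starting from any quadratic lattice $M_0$ (e.g. the lattice generated by a basis scaled into $\Lambda$), the non-degeneracy of $q$ over $K$ gives a dual lattice $M_0^\sharp$ with respect to $b$, and the standard estimate $2b(x, M_0) \subset R$ for $x \in \Lambda$ forces $\Lambda \subset \tfrac{1}{2} M_0^\sharp$, which is finitely generated over the Noetherian ring $R$; hence so is its submodule $\Lambda$. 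The hardest single step remains the closure-under-addition argument above — everything else is either a formal consequence of the definition or a routine appeal to the DVR structure — and I would present it as a clean lemma about anisotropic binary forms over a complete field, namely that $v(b(x,y)) \geq \tfrac{1}{2}(v(q(x)) + v(q(y))) - v(2)$, from which the theorem drops out.
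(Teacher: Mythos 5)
The paper itself gives no proof of this theorem (it simply cites Eichler and Gerstein), so your proposal stands or falls on its own; its architecture is the standard and correct one: reduce maximality/uniqueness to showing $\Lambda$ is closed under addition, prove that via the anisotropic binary form on the span of $x,y$, and get finite generation by trapping $\Lambda$ inside a scaled dual lattice. But the single step carrying all the arithmetic content is justified by a false claim. You assert that since $v(b(x,y)^2)=-2m<0\le v(q(x)q(y))$, the discriminant $4b(x,y)^2-4q(x)q(y)$ is ``a unit times an even power of the uniformizer, hence a square in the complete field $K$.'' Even valuation does not make an element of a complete discretely valued field a square: the unit part must itself be a square, and it need not be (a nonsquare unit of $\Z_p$ has valuation $0$). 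As written, the contradiction with anisotropy does not follow, so the closure-under-addition argument has a genuine gap.

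The repair is standard and is exactly what your closing inequality encodes. Write the discriminant as $(2b)^2\left(1-\frac{q(x)q(y)}{b^2}\right)$. Since $q(x),q(y)\in R$ and $v(2b)<0$, one has $m>v(2)$ and hence $v\!\left(\frac{q(x)q(y)}{b^2}\right)\ge 2m>2v(2)=v(4)$; the quantitative form of Hensel's lemma (the Local Square Theorem: $1+z$ is a square whenever $v(z)>2v(2)$) then shows the second factor is a square, so the discriminant is a nonzero square, the binary form on the span of $x$ and $y$ (linearly independent in the only nontrivial case; the dependent case is a one-line check) is isotropic, and anisotropy is contradicted. Note that this is also the only place the dyadic case needs attention: it is the quantitative Hensel statement, not bookkeeping of factors of $2$, that makes your parenthetical ``same structure in the dyadic case'' true. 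With that substitution, your lemma $v(b(x,y))\ge\frac{1}{2}\bigl(v(q(x))+v(q(y))\bigr)-v(2)$ is correct, and the remaining steps -- $q(r x)=r^2q(x)$, spanning by scaling, $2b(\Lambda,M_0)\subset R$ forcing $\Lambda\subset\frac{1}{2}M_0^{\sharp}$, Noetherianity of $R$, and the uniqueness/maximality formalities -- are all fine.
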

\begin{proof}
See \cite{Eichler} or \cite[Thm. 8.8]{Gerstein}.
\end{proof}

\begin{thm}
\label{SHIMURA29.8}
Let $(V,q)$ be a finite-dimensional quadratic space over $K$ and 
$\Lambda \subset V$ a maximal quadratic $R$-lattice.  Then there exists 
a decomposition 
\[V = \bigoplus_{i=1}^r {\mathbb{H}}_K \oplus V' \]
with $q|_{V'}$ anisotropic such that 
\[ \Lambda = \bigoplus_{i=1}^r {\mathbb{H}}_R \oplus \Lambda ', \]
where $\Lambda ' = \Lambda \cap V'$.
\end{thm}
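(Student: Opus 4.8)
The statement is a lattice-theoretic fact, essentially due to Eichler and developed systematically in \cite[$\S$ 82]{OMeara}; the plan is an inductive proof on $n = \dim V$ that peels off one hyperbolic plane at a time. If $q_{/V}$ is anisotropic, take $r = 0$, $V' = V$ and $\Lambda' = \Lambda$: there is nothing to prove, and Eichler's Maximal Lattice Theorem moreover pins $\Lambda$ down explicitly. So the whole content is the isotropic case, and there it suffices to establish the following splitting lemma and then invoke the induction hypothesis: \emph{if $q$ is isotropic on $V$, then $\Lambda$ has a rank-two orthogonal direct summand, isometric to $\mathbb{H}_R$ and sitting inside a hyperbolic plane $H = \mathbb{H}_K \subseteq V$, whose orthogonal complement $\Lambda \cap H^{\perp}$ is a maximal quadratic $R$-lattice on $H^{\perp}$.}

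To prove the splitting lemma, write $B(u,v) = \frac12(q(u+v) - q(u) - q(v))$ and let $\pi$ be a uniformizer of $R$. First I would produce a \emph{primitive} isotropic $e \in \Lambda$: scale any isotropic vector of $V$ into $\Lambda$ and divide out the top power of $\pi$, so that $e \notin \pi\Lambda$. Consider the fractional ideal $\mathfrak{a} = B(e, \Lambda) \subseteq K$. Maximality forces $\mathfrak{a} \not\subseteq \pi R$: otherwise $q(a\pi^{-1}e + z) = a^2\pi^{-2}q(e) + 2a\pi^{-1}B(e,z) + q(z) = 2a\pi^{-1}B(e,z) + q(z) \in R$ for all $a \in R$, $z \in \Lambda$, so $\Lambda + R\pi^{-1}e$ is a quadratic $R$-lattice strictly larger than $\Lambda$. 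On the other hand $2B(e,z) = q(e+z) - q(z) \in R$, so $\mathfrak{a} \subseteq \frac12 R$. In the \emph{nondyadic} case $\frac12 R = R$, so $\mathfrak{a} = R$; pick $z_0 \in \Lambda$ with $u := B(e,z_0) \in R^{\times}$ and put $f := u^{-1}z_0 - \frac{q(z_0)}{2u^2}e$, where one readily checks $f \in \Lambda$, $q(f) = 0$ and $B(e,f) = 1$. Then $w \mapsto B(w,f)e + B(w,e)f$ is a well-defined idempotent endomorphism of $\Lambda$ (using $B(\Lambda, e) = \mathfrak{a} = R$ and, nondyadically, $B(\Lambda, f) \subseteq \frac12 R = R$ to see the coefficients lie in $R$), with image $Re + Rf$ and kernel $\Lambda \cap (Re+Rf)^{\perp} = \Lambda \cap H^{\perp}$, where $H := Ke + Kf$ is a hyperbolic plane. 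Hence $\Lambda = (Re + Rf) \oplus (\Lambda \cap H^{\perp})$, and $\Lambda \cap H^{\perp}$ is maximal on $H^{\perp}$ because a properly larger quadratic lattice $N$ on $H^{\perp}$ would make $(Re+Rf) \oplus N$ properly contain $\Lambda$.

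Granting the lemma, induction on $\dim V$ concludes: apply the induction hypothesis to $(H^{\perp}, q|_{H^{\perp}}, \Lambda \cap H^{\perp})$, obtaining $H^{\perp} = \bigoplus_{i=1}^{r-1}\mathbb{H}_K \oplus V'$ and $\Lambda \cap H^{\perp} = \bigoplus_{i=1}^{r-1}\mathbb{H}_R \oplus \Lambda'$ with $\Lambda' = \Lambda \cap V'$ (since $V' \subseteq H^{\perp}$), then re-adjoin $H$ and $Re + Rf$. I expect the main obstacle to be the \emph{dyadic} case of the splitting lemma: there $\frac12 R \supsetneq R$, the ideal $\mathfrak{a} = B(e, \Lambda)$ is only constrained to lie between $R$ and $\frac12 R$, the normalized partner $f$ above can leave $\Lambda$, and one must check that the rank-two summand extracted is genuinely a copy of $\mathbb{H}_R$ and not a proper sublattice of one. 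Carrying this out is precisely the delicate part of the theory of maximal lattices over dyadic local fields, for which I would follow (or simply cite) \cite[$\S$ 82]{OMeara}.
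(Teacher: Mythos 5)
The paper offers no argument for this statement at all: its ``proof'' is a citation of \cite[Lemma 29.8]{Shimura}, plus the remark that the finite-residue-field hypothesis there is never actually used. Your route is therefore genuinely different: a direct induction on $\dim V$, splitting one hyperbolic plane at a time off a maximal lattice on an isotropic space. Your nondyadic computation is correct and complete --- primitivity of $e$ together with maximality forces $B(e,\Lambda)=R$, your $f=u^{-1}z_0-\frac{q(z_0)}{2u^2}e$ is an isotropic partner lying in $\Lambda$ with $B(e,f)=1$, the idempotent $w\mapsto B(w,f)e+B(w,e)f$ splits $\Lambda=(Re+Rf)\oplus(\Lambda\cap H^{\perp})$, and the maximality of the complement and the inductive step are handled properly. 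What this buys over the paper's one-line citation: a self-contained argument that uses neither completeness of $R$ nor any condition on the residue field, so the paper's remark about Shimura's hypotheses becomes superfluous.

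The one caveat is the dyadic case, which the paper's setting genuinely requires (completions such as $\Z_2$ arise in $\S 4$) and which you defer to \cite[$\S 82$]{OMeara}. By the paper's own citation-level standard that is acceptable, but the case is closer to your reach than you suggest. If you test maximality against $\Lambda+Rce$ for $c$ a generator of $\bigl(2B(e,\Lambda)\bigr)^{-1}$, rather than only $c=\pi^{-1}$, you get $B(e,\Lambda)=\frac{1}{2}R$ exactly, in every residue characteristic. Then choose $z_0\in\Lambda$ with $B(e,z_0)=\frac{1}{2}$ and set $f=z_0-q(z_0)e$: no division by $2$ occurs, $f\in\Lambda$, $q(f)=0$, $2B(e,f)=1$, so $Re+Rf$ carries the form $x_1x_2$ --- which is the maximal hyperbolic lattice, as needed dyadically, where the plane with $q=2x_1x_2$ is \emph{not} maximal --- and the projection $w\mapsto 2B(w,f)e+2B(w,e)f$ has coefficients in $R$ because $2B(\Lambda,\Lambda)\subseteq R$ and $2B(\Lambda,e)=R$. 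With that normalization your splitting lemma and induction go through uniformly, and the appeal to O'Meara can be dropped altogether.
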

\begin{proof} See \cite[Lemma 29.8]{Shimura}, wherein the result is stated for 
complete discrete valuation rings with finite residue field.  However, it is easy to see that the finiteness of the residue field is not used in the proof.
\end{proof}
\noindent
\emph{Proof of Theorem \ref{EUCEQUALSMAXTHM}}: By Proposition \ref{EUCIMPLIESMAXPROP}, it is enough to show that any maximal $q_{/R}$ 
is Euclidean. \\ \indent
Suppose first that $q$ is anisotropic over $R$.  In this case, the Euclideanness of $q$ follows immediately from Eichler's Maximal Lattice Theorem: indeed, we have 
\[R^n = \{x \in K^n \ | \ |q(x)|_a \geq 1 \}. \]  
Therefore, 
$x \in K^n \setminus R^n \iff |q(x)|_a = |q(x-0)|_a < 1$. \\
We now deal with the general case.  By Theorem \ref{SHIMURA29.8}, we may write $\Lambda = \bigoplus_{i=1}^r \mathbb{H}_R \oplus \Lambda'$ with $\Lambda'$ anisotropic.  With respect to a suitable $R$-basis of $\Lambda$, $q$ takes the form
\[q(X) = q(x,x') = x_1 x_2 + \ldots + x_{2r-1} x_{2r} + q'(x'), \]
where $x' = (x_{2r+1},\ldots,x_{n})$ and $q'$ is anisotropic.  Let $X = (x,x') \in K^n \setminus R^n$.  We must find $Y = (y,y') \in R^n$ such that $v(q(X-Y))) < 0$.  By symmetry, we may assume that $v(x_1 x_2) \geq \ldots \geq v(x_{2r-1} x_{2r})$ and $v(x_{2r}) \leq 
v(x_{2r-1})$.  \\
Case 1: $v(x_{2r}) \geq 0$.  Then $x = (x_1,\ldots,x_{2r}) \in R^{2r}$ so that 
we must have $x' \in K^{n-2r} \setminus R^{n-2r}$.  Put $Y = (y,y') = 0$.  Then 
$v(x_1 x_2 + \ldots + x_{2r-1} x_{2r}) \geq 0$, whereas by Eichler's Maximal Lattice Theorem, $v(q'(x')) < 0$, so \[v(q(X)) = v(x_1 x_2 + \ldots + x_{2r-1} x_{2r} + q'(x')) < 0. \]  
Case 2: $v(x_{2r}) < 0$.  We choose $y' = 0$ and $y_1 = \ldots = y_{2r-2} = 0$.  Also define 
\[\alpha = q_2(x'), \ \beta = x_1 x_2 + \ldots + x_{2r-3} x_{2r-2}. \]
If $v(\alpha + \beta + x_{2r-1} x_{2r}) \leq v(x_{2r})$, then since 
$v(x_{2r}) < 0$, we may take $y = 0$, getting 
\[v(q(X)) = v(\alpha + \beta + x_{2r-1} x_{2r}) < 0. \] 
If $v(\alpha + \beta + x_{2r-1} x_{2r-2}) > v(x_{2r})$, we may take $y_{2r-1} = 1$, $y_{2r} = 0$, getting \[v(q(X-Y)) = v(\alpha + \beta + x_{2r-1} x_{2r} - x_{2r}) = v(x_{2r}) < 0. \]

\begin{cor}
\label{LOCALLYMAXCOR}
Let $R$ be a Hasse domain and $q_{/R}$ a quadratic form.  Then $q$ is locally 
Euclidean iff the corresponding lattice $\Lambda_q$ is maximal.
\end{cor}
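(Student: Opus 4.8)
The plan is to obtain this as an immediate consequence of Theorem \ref{EUCEQUALSMAXTHM} together with Lemma \ref{EASYMAXLEMMA}(a), so the ``proof'' is really a chaining of equivalences. First I would unwind the meaning of \emph{locally Euclidean}: $q_{/R}$ is locally Euclidean precisely when, for every finite place $v \in \Sigma_R$, the base extension $q_{/R_v}$ — a quadratic form over the complete discrete valuation ring $R_v$ (whose fraction field $K_v$ has characteristic $\neq 2$ by the standing convention, and which may well be dyadic) — is Euclidean for the canonical norm on $R_v$. Since $q(\Lambda_q) \subseteq R$ forces $q(\Lambda_q \otimes_R R_v) \subseteq R_v$, this is a legitimate quadratic form over $R_v$, and the $R_v$-lattice it determines is $(\Lambda_q)_v = \Lambda_q \otimes_R R_v = R_v^n$.

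Next I would invoke Theorem \ref{EUCEQUALSMAXTHM} place by place. For each $v \in \Sigma_R$, $R_v$ is a CDVR with finite residue field $k(v)$, so its canonical ideal norm is exactly the norm $|\cdot|_{\# k(v)}$ of Example 1.3 to which that theorem refers; hence $q_{/R_v}$ is Euclidean for the canonical norm iff $(\Lambda_q)_v$ is a maximal $R_v$-lattice. Note that Theorem \ref{EUCEQUALSMAXTHM} carries no nondyadic restriction, so this applies at every $v$, including the dyadic ones.

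Finally I would assemble the chain: $q$ is locally Euclidean $\iff$ $(\Lambda_q)_v$ is $R_v$-maximal for all $v \in \Sigma_R$ $\iff$ $\Lambda_q$ is maximal, where the last step is precisely Lemma \ref{EASYMAXLEMMA}(a). I do not expect a real obstacle here; the only point meriting a line of care is confirming that the canonical norm on each completion $R_v$ of a Hasse domain coincides with the norm of Example 1.3 and that the characteristic hypothesis of Theorem \ref{EUCEQUALSMAXTHM} is inherited by $R_v$ from $R$ — both of which are already recorded in the earlier sections.
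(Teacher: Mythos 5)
Your proposal is correct and follows exactly the paper's route: the paper also deduces the corollary immediately from Theorem \ref{EUCEQUALSMAXTHM} applied at each completion $R_v$ together with Lemma \ref{EASYMAXLEMMA}(a); you have merely spelled out the place-by-place chaining that the paper leaves implicit.
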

\begin{proof} This is an immediate consequence of Theorem \ref{EUCEQUALSMAXTHM} and Lemma \ref{EASYMAXLEMMA}.
\end{proof}

\subsection{ADC forms over Hasse domains}
\textbf{} \\ \\ \noindent
Let $q_{/R}$ be a nondegenerate quadratic form.  We define the \textbf{genus} $\mathfrak{g}(q)$ as follows: it is 
the set of $R$-isomorphism classes of quadratic forms $q'$ such that: for each $v \in S$, $q \cong_{K_v} q'$, and for 
each $v \in \Sigma_R$, $q \cong_{R_v} q'$.  

\begin{thm}
\label{SETUP1}
For any nondegenerate quadratic form $q$ over a Hasse domain $R$, the genus $\mathfrak{g}(q)$ of $q$ is finite.
\end{thm}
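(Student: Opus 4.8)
The plan is to prove finiteness of the genus $\mathfrak{g}(q)$ in two stages, exploiting the local-global structure that is built into its definition. The genus is, by construction, the set of $R$-isomorphism classes of forms $q'$ that are locally isomorphic to $q$ (meaning $\cong_{K_v}$ for $v \in S$ and $\cong_{R_v}$ for $v \in \Sigma_R$). The first observation is that there are only finitely many local possibilities to match: for $v \in S$ the isomorphism class of $q_{/K_v}$ is just one fixed class, and for $v \in \Sigma_R$, all but finitely many $v$ are nondyadic and have $\delta_v(\Lambda_q) = 0$, at which places the local isomorphism class of a unimodular form is pinned down by a bounded amount of data (rank, and for finite residue field the discriminant square class, of which there are at most two). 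So the local constraints are a \emph{finite} list of conditions. The real content is therefore: among all quadratic $R$-lattices $\Lambda'$ whose localization $\Lambda'_v$ is isomorphic to $\Lambda_{q,v}$ for every $v \in \Sigma_R$ and whose rational/completed form at $v\in S$ matches, there are only finitely many $R$-isomorphism classes.

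I would handle this by a reduction to the theory of the \emph{proper class number} (or spinor genus finiteness), but since the excerpt is self-contained up to Dedekind domains and quadratic lattices, the cleanest route is: fix a model $\Lambda = \Lambda_q$ sitting in the quadratic space $(V,q)$ over $K$. Any $q'$ in $\mathfrak{g}(q)$ is, after identifying its quadratic space with $(V,q)$ via a $K$-isomorphism (which exists because $q'\cong_{K_v} q$ for all $v\in S$ and $q' \cong_{R_v} q$ hence $\cong_{K_v} q$ at the finite places, so $q' \cong_K q$ by Hasse-Minkowski over the global field $K$), represented by an $R$-lattice $\Lambda'$ in $V$ with $q(\Lambda')\subseteq R$ and $\Lambda'_v \cong \Lambda_v$ for all finite $v$. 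Now the $K$-isomorphisms taking $\Lambda'_v$ onto $\Lambda_v$ assemble into an adelic element of the orthogonal group $O(V)(\mathbb{A}_K)$, well defined modulo the stabilizer of $\Lambda$, which is an open compact subgroup. Thus the set of lattices in the genus is in bijection with a double coset space $O(V)(K) \backslash O(V)(\mathbb{A}_K) / U$ where $U$ is open compact; finiteness of this double coset space is a standard finiteness theorem for the orthogonal group over a global field. The $R$-isomorphism classes in $\mathfrak{g}(q)$ are a quotient of this finite set, hence finite.

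An alternative, more elementary argument avoiding adeles: since all lattices $\Lambda'$ in the genus have the same discriminant ideal $\Delta(\Lambda') = \Delta(\Lambda)$ (equality of discriminants is a local condition, satisfied place by place), and they all live inside the fixed $K$-vector space $V$, one can after scaling arrange $\Lambda \subseteq \Lambda' \subseteq \frac{1}{N}\Lambda$ for a fixed nonzero $N \in R$ depending only on $\disc(\Lambda)$; there are only finitely many $R$-submodules of the finitely generated torsion module $\frac{1}{N}\Lambda / \Lambda$ (using that $R$ is a finite quotient domain, so $\frac1N \Lambda/\Lambda$ is \emph{finite}), hence finitely many such $\Lambda'$ up to nothing — and a fortiori finitely many up to $R$-isomorphism. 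One must be slightly careful that not every $q'$ in the genus is literally of this form on the nose, but every $R$-isomorphism class has a representative lattice in $V$ containing $\Lambda$ with bounded index, by the local isomorphisms and the approximation/CRT argument used already in the proof of Theorem~\ref{DEDLOCTHM}.

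\textbf{The main obstacle} is the passage from ``locally isomorphic everywhere'' to ``only finitely many global classes'': locally the isomorphisms exist freely, and the subtlety is that a global lattice is not determined by its localizations, only constrained by them, so one must genuinely bound the global ambiguity. In the elementary approach this is exactly the step where one invokes the finite-quotient property of the Hasse domain $R$ to turn ``bounded index sublattices of a fixed lattice'' into a \emph{finite} set; in the adelic approach it is the finiteness of the orthogonal-group double coset space, which is where the arithmetic of the global field really enters. I expect to cite a standard reference (e.g. O'Meara, \S 103, or Gerstein) for whichever version is used, since a from-scratch proof of class number finiteness would be disproportionate to the role this lemma plays here.
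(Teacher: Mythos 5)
The paper does not actually prove this statement: it is quoted directly from O'Meara \cite[Thm.\ 103:4]{OMeara} (finiteness of the number of classes in a genus). Your main route --- use Hasse--Minkowski to place every $q'\in\mathfrak{g}(q)$ as a lattice on the fixed quadratic space $(V,q)$, identify the lattices in the genus with a double coset space $O(V)(K)\backslash O(V)(\mathbb{A}_K)/U$ with $U$ open compact, and cite the standard finiteness theorem --- is essentially the same deferral, and the reduction you sketch is correct; so at the level at which the paper treats this result, your proposal matches it.

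Your ``more elementary'' alternative, however, does not work as sketched. First, a small slip: if $\Lambda\subseteq\Lambda'$ and $\Delta(\Lambda')=\Delta(\Lambda)$, then the index is trivial and $\Lambda'=\Lambda$, so the containment $\Lambda\subseteq\Lambda'\subseteq\frac1N\Lambda$ as written cannot capture a nontrivial class. More seriously, the assertion that every class in the genus has a representative lattice within a bounded window around $\Lambda$ is exactly the hard global step, and it does not follow from the local isomorphisms together with the CRT/approximation argument of Theorem \ref{DEDLOCTHM}. Lattices on $V$ lying in the genus can sit at unbounded distance from $\Lambda$ (already for a hyperbolic plane, the lattices spanned by $\pi^k e,\ \pi^{-k}f$ are all in the genus of $\langle e,f\rangle_{R}$), and bringing a given class into a bounded window requires producing a \emph{global} isometry of $V$ approximating prescribed local ones modulo lattice stabilizers --- which is precisely the content of the double-coset (class number) finiteness you were trying to avoid. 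So the elementary paragraph should either be dropped or replaced by an honest appeal to O'Meara \S 103; counting submodules of the finite quotient $\frac1N\Lambda/\Lambda$ only becomes relevant after that global step has been carried out.
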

\begin{proof} \cite[Thm. 103:4]{OMeara}.
\end{proof} 
\noindent
This allows us to define the \textbf{class number} $h(q)$ of a quadratic form $q$ as $\# \mathfrak{g}(q)$.  Of particular 
interest are forms of class number one, i.e., for which $q$ is (up to isomorphism) the only form in its genus.
\\ \\
A quadratic form $q_{/R}$ is \textbf{regular} if it $R$-represents every element of $R$ which is represented by its genus.  
In other words, $q$ is regular if for all $d \in R$, if there is $q' \in \mathfrak{g}(q)$ and $x \in R^n$ such that 
$q'(x) = d$, then there is $y \in R^n$ such that $q(y) = d$.  

\begin{thm}
\label{SETUP2}
Let $q_{/R}$ be a nondegenerate quadratic form over a Hasse domain, and let $d \in R$.  Suppose that for all $v \in S$, 
$q$ $K_v$-represents $d$ and for all $v \in \Sigma_R$, $q$ $R_v$-represents $d$.  Then there exists $q' \in \mathfrak{g}(q)$ such that $q'$ $R$-represents $d$.%
\end{thm}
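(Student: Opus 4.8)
The statement is a local-global principle for representation of a single element $d$ by \emph{some} form in the genus. My plan is to reduce it to the analogous statement for quadratic \emph{spaces} (i.e., forms after extension to $K$ and to the various $K_v$), which is exactly the kind of thing that is handled by the theory in O'Meara's book. First I would observe that the hypotheses say precisely that $d$ is represented by the quadratic $K$-space $(V,q)$ over every completion $K_v$ with $v \in \Sigma_K$: for $v \in S$ this is given outright, while for $v \in \Sigma_R$ the hypothesis that $q$ $R_v$-represents $d$ \emph{a fortiori} gives that $q$ $K_v$-represents $d$. By the Hasse--Minkowski theorem (in the number field case) or its function field analogue, it follows that $(V,q)$ $K$-represents $d$; that is, there exists $x \in K^n$ with $q(x) = d$. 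Equivalently, writing $\langle -d\rangle \perp q$, this form is isotropic over $K$, so it splits off a hyperbolic plane, and we may write $q \cong \langle d \rangle \perp q_1$ for a quadratic $K$-space $q_1$ of dimension $n-1$.

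The next step is to pass from this $K$-level splitting to an $R$-lattice in the genus that realizes the representation integrally. The point is that the hypothesis "for all $v \in \Sigma_R$, $q$ $R_v$-represents $d$" means that, locally at each finite place, the lattice $\Lambda_v$ contains a vector of norm $d$; so locally we may split $\Lambda_v \cong R_v e_v \perp \Lambda_v'$ with $q(e_v) = d$ (after the usual care at dyadic places — here one uses that a vector of norm a unit times $d$ can be taken primitive, or invokes the local splitting lemmas of O'Meara Chapter 8). Now I would build a global lattice $\Lambda'$ as follows: start from the $K$-space $\langle d\rangle \perp q_1$, fix the obvious $R$-lattice $R e \perp L_1$ on it where $q(e) = d$ and $L_1$ is any $R$-lattice on $q_1$, and then correct it place by place. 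Concretely, since $\Lambda_v' \cong (R_v e \perp L_1)_v$ as $R_v$-lattices for all but finitely many $v$ — both being, say, unimodular or of small discriminant outside a finite set — and since at the finitely many bad places we have prescribed local lattices, we may use the strong approximation / patching fact (O'Meara 81:14, or the fact that lattices on a fixed space that agree at almost all places and are prescribed at the rest glue to a global lattice) to produce a single $R$-lattice $\Lambda'$ on $V$ with $\Lambda_v' \cong \Lambda_v$ for all $v \in \Sigma_R$ and $\Lambda' \otimes_{K_v} = \Lambda \otimes_{K_v}$ matching $q$ over each $v \in S$. By construction $e \in \Lambda'$, so $q'$, the form attached to $\Lambda'$, $R$-represents $d$; and by construction $\Lambda'$ is everywhere locally isometric to $\Lambda$ and $K_v$-isometric to $q$ at the infinite places, so $q' \in \mathfrak{g}(q)$. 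This is precisely the desired conclusion, and the finiteness of the genus (Theorem \ref{SETUP1}) guarantees that $q'$ lives among finitely many isomorphism classes, though finiteness is not strictly needed for the existence statement.

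The main obstacle — and the place where I would spend the most care — is the gluing step: producing a genuine global $R$-lattice $\Lambda'$ from the prescribed local data $(\Lambda_v')_{v \in \Sigma_R}$ together with the chosen $K_v$-structure at $v \in S$. One has to check three things: (i) the local lattices $\Lambda_v'$ agree with a fixed reference lattice $R e \perp L_1$ for all but finitely many finite $v$, which follows because both are determined by the form $q$ (hence isometric to $\Lambda_v$) at all $v$ outside the finitely many places dividing $2d$ and the discriminant; (ii) that the "lattice defined by local conditions" theorem actually applies — this is a standard fact for Dedekind domains, and for Hasse domains it is in O'Meara $\S 81$E; (iii) the dyadic places, where the local splitting $\Lambda_v \cong R_v e_v \perp \Lambda_v'$ from a norm-$d$ vector requires that the vector be chosen primitive and of the right "weight" — here I would cite the local representation theory of O'Meara Chapter 8 (e.g. 82:19 and surrounding results) rather than redo it. A secondary subtlety is the function field case: Hasse--Minkowski for global function fields of odd characteristic is standard (the characteristic is not $2$ by hypothesis), so step one goes through unchanged, and strong approximation for spin groups — which underlies the gluing — is likewise available, so no new difficulty arises there.
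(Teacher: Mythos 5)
Your overall architecture --- get $K$-representation of $d$ from Hasse--Minkowski, then patch together a lattice in the genus of $q$ containing a fixed representing vector by prescribing localizations --- is exactly the argument behind the result the paper cites (its entire proof is the reference to O'Meara 102:5). However, one step of your local analysis is genuinely false as stated, and it is not a dyadic issue. From the hypothesis that $\Lambda_v$ contains a vector $y_v$ with $q(y_v)=d$ you cannot conclude an orthogonal splitting $\Lambda_v \cong R_v e_v \perp \Lambda_v'$ with $q(e_v)=d$: such a splitting requires $B(e_v,\Lambda_v) \subseteq q(e_v)R_v$, which fails whenever $d$ has large valuation relative to the scale of $\Lambda_v$. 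Concretely, for odd $p$ the unimodular $\Z_p$-lattice $\langle 1,1\rangle$ represents $d=p^2$, but a splitting $\Lambda_v = R_v e \perp \Lambda_v'$ with $q(e)=p^2$ would force $p^2$ to divide $\disc(\Lambda_v)$, contradicting unimodularity. So the local lattices you propose to glue need not exist, and the hedge about choosing the vector primitive does not repair this.

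The standard fix --- and what the cited proof actually does --- avoids splitting entirely. Fix $x \in V$ with $q(x)=d$ (your first step). For the finitely many finite $v$ with $x \notin \Lambda_v$, choose $y_v \in \Lambda_v$ with $q(y_v)=d$ and use Witt extension over $K_v$ to produce an isometry $\sigma_v$ of $V_v$ carrying $y_v$ to $x$; set $\Lambda_v' := \sigma_v(\Lambda_v)$, which contains $x$ and is isometric to $\Lambda_v$. At all other finite $v$ keep $\Lambda_v' = \Lambda_v$, which already contains $x$. This also repairs a second inaccuracy in your gluing step: the ``lattice with prescribed localizations'' theorem (O'Meara 81:14) requires the prescribed local lattices to \emph{equal} a fixed global reference lattice inside $V_v$ at almost all $v$ --- agreement up to isometry, or equality of discriminants outside a finite set, is not the right condition. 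With the construction above equality outside a finite set is automatic, the glued lattice $\Lambda'$ lies in $\mathfrak{g}(q)$, contains $x$, and hence $R$-represents $d$, as required; finiteness of the genus is indeed not needed.
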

\begin{proof} \cite[102:5]{OMeara}.
\end{proof}

\begin{thm}
\label{SETUP4}
For a form $q$ over a Hasse domain $R$, the following are equivalent: \\
(i) $q$ is an ADC form. \\
(ii) $q$ is regular and ``locally ADC'': for all $\mathfrak{p} \in \Sigma(R)$, $q$ is ADC over $R_{\mathfrak{p}}$.
\end{thm}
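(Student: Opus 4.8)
The plan is to prove the two implications of the equivalence separately, drawing on the local-global machinery already assembled. The key players are Theorem \ref{SETUP2} (local representability of $d$ at every place implies $d$ is represented by \emph{some} form in the genus), Theorem \ref{ADCLOCTHM} and Corollary \ref{ADCLOCCOR} (ADC passes to localizations and to completions at $R$-regular valuations), and the definitions of ``regular'' and ``locally ADC.'' The guiding observation is that over a Hasse domain the obstruction to $q$ being ADC decomposes into a ``local part'' (whether $K_v$- or $R_v$-representability of $d$ survives passage to $R_v$, for each finite $v$, and the Hasse principle at the infinite places) and a ``global genus part'' (whether the global Hasse principle only guarantees representation by the genus, not by $q$ itself).

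First I would prove (i) $\Rightarrow$ (ii). Assume $q$ is ADC over $R$. Localness: for each $\mathfrak{p} \in \Sigma_R$, Theorem \ref{ADCLOCTHM} applied to $S = R \setminus \mathfrak{p}$ shows $q$ is ADC over $R_{\mathfrak{p}}$, which is exactly the ``locally ADC'' condition. (If one wants the statement at the completions $R_v$ as well, Corollary \ref{ADCLOCCOR} delivers it, since every $v \in \Sigma_R$ is $R$-regular.) Regularity: suppose $d \in R$ is represented by some $q' \in \mathfrak{g}(q)$, say $q'(x) = d$ with $x \in R^n$. Since $q \cong_{K_v} q'$ for $v \in S$ and $q \cong_{R_v} q'$ for $v \in \Sigma_R$, the element $d$ is $K_v$-represented by $q$ for all $v \in S$ and $R_v$-represented by $q$ for all $v \in \Sigma_R$; in particular $d$ is $K$-represented by $q$ (by Hasse--Minkowski, using that $K_v$-representability holds at the finite places too since $R_v \subset K_v$). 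Because $q$ is ADC over $R$, $d$ is $R$-represented by $q$. Hence $q$ is regular.

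Next, (ii) $\Rightarrow$ (i). Assume $q$ is regular and locally ADC, and let $d \in R$ be $K$-represented by $q$; we must $R$-represent it. The idea is to upgrade the given $K$-representation to a representation by the \emph{genus} over $R$ via Theorem \ref{SETUP2}, and then use regularity to descend to $q$ itself. To apply Theorem \ref{SETUP2} I need: (a) $q$ $K_v$-represents $d$ for all $v \in S$ --- this is immediate from $K$-representability since $K \hookrightarrow K_v$; and (b) $q$ $R_v$-represents $d$ for all $v \in \Sigma_R$. For (b), note that $K$-representability gives $K_v$-representability, and by Corollary \ref{ADCLOCCOR} (each finite $v$ is $R$-regular) $q$ is ADC over $R_v$; thus $K_v$-representability of $d \in R_v$ forces $R_v$-representability. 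With (a) and (b) in hand, Theorem \ref{SETUP2} produces $q' \in \mathfrak{g}(q)$ that $R$-represents $d$. Then regularity of $q$ yields that $q$ itself $R$-represents $d$, completing the proof.

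The main obstacle I anticipate is bookkeeping at the Archimedean (and infinite) places $v \in S$: one must be careful that the hypotheses of Theorem \ref{SETUP2} are genuinely met, i.e., that $K$-representability of $d$ really does give $K_v$-representability for all $v \in S$ (trivial) but also that the ``locally ADC'' hypothesis in (ii), which is stated only at the finite primes $\mathfrak{p} \in \Sigma_R$, combined with Corollary \ref{ADCLOCCOR} suffices to handle the finite completions --- there is a subtle point that ``ADC over $R_{\mathfrak{p}}$'' (the localization) and ``ADC over $R_v$'' (the completion) must be reconciled, which is precisely what Corollary \ref{ADCLOCCOR} is for, so I would invoke it explicitly rather than conflate the two. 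Everything else is a routine chase through the definitions.
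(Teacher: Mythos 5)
Your proposal is correct and follows essentially the same route as the paper: Theorem \ref{ADCLOCTHM} plus Hasse--Minkowski for (i) $\Rightarrow$ (ii), and Theorem \ref{SETUP2} plus regularity for (ii) $\Rightarrow$ (i). Your explicit use of Corollary \ref{ADCLOCCOR} to pass from ``ADC over the localization $R_{\mathfrak{p}}$'' to ``ADC over the completion $R_v$'' is a welcome bit of care that the paper's proof leaves implicit, but it is the same argument.
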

\begin{proof}
(i) $\implies$ (ii): Suppose $q$ is ADC.  By our theorems on localization, $q$ is locally ADC.  Now let $d \in R$ be represented by the genus of $q$: i.e., there exists $q' \in \mathfrak{g}(q)$ such that $q'$ $R$-represents $d$.  Since for all $v \in \Sigma_K$, $q' \cong_{K_v} q$, it follows that $q$ $K_v$-represents $d$ for all $v$.  By Hasse-Minkowski, $q$ $K$-represents $d$, and since $q$ is an ADC-form, $q$ $R$-represents $d$.  \\
(ii) $\implies$ (i): Suppose $q$ is regular and locally ADC, and let $d \in R$ be $K$-rationally represented by $q$.  
Then for all $v \in \Sigma(R)$, $d$ is $K_v$-represented by $q$, hence using the local 
ADC hypothesis, is $R_v$-represented.  Moreover, for all places $v \in \Sigma(K) \setminus \Sigma(R)$, $d$ is $K_v$-represented by $q$.  By Theorem \ref{SETUP2}, there exists $q' \in \mathfrak{g}(q)$ which $R$-represents $d$, 
and then by definition of regular, $q$ $R$-represents $d$.
\end{proof}    
\noindent
A quadratic form $q$ over a Hasse domain $R$ is \textbf{sign-universal} if for all $d \in R$, if $q$ $K_v$-represents 
$d$ for all real places $v \in \Sigma_K$, then $q$ $R$-represents $d$.

\begin{prop}
\label{SETUP5}
Let $n \geq 4$, and let $q(x_1,\ldots,x_n)$ be a nondegenerate quadratic form over a Hasse domain $R$.  Then $q$ is ADC iff it is sign-universal.
\end{prop}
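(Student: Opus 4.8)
The plan is to prove the two implications separately. The implication ``sign-universal $\Rightarrow$ ADC'' is purely formal and uses nothing about $n$; the implication ``ADC $\Rightarrow$ sign-universal'' is where the hypothesis $n \geq 4$ enters, through the Hasse--Minkowski theorem together with the fact that a nondegenerate quadratic form in at least four variables is automatically universal over every non-Archimedean completion of $K$.

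First I would dispatch the easy implication: assume $q$ is sign-universal and let $d \in R$ be $K$-represented by $q$. Since $K$ embeds in each completion $K_v$, the form $q$ in particular $K_v$-represents $d$ at every real place $v \in \Sigma_K$, so sign-universality forces $q$ to $R$-represent $d$. Hence $q$ is ADC.

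For the converse, assume $q$ is ADC and let $d \in R$ be $K_v$-represented by $q$ at every real place $v \in \Sigma_K$; I must produce an $R$-representation of $d$. If $d = 0$ this is trivial, so take $d \in R^{\bullet}$. The key step is to check that $q$ in fact $K_v$-represents $d$ at \emph{every} place $v$ of $K$: at real places this is the hypothesis; at a complex place $K_v \cong \C$ and every nondegenerate form over an algebraically closed field is universal; and at a finite place, $K_v$ is a non-Archimedean local field of characteristic $\neq 2$, over which a nondegenerate form in $n \geq 4$ variables is universal --- if it is isotropic (automatic once $n \geq 5$) it represents everything, and any anisotropic $4$-dimensional form over $K_v$ is similar to the norm form of the quaternion division algebra over $K_v$, hence also universal since the reduced norm is surjective. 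With $d$ represented over every $K_v$, the Hasse--Minkowski theorem --- valid over any global field of characteristic $\neq 2$, hence over the fraction field of a Hasse domain --- gives that $q$ $K$-represents $d$, and the ADC hypothesis upgrades this to an $R$-representation. Thus $q$ is sign-universal.

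The only non-formal ingredient, and the main point to get right, is the local universality statement for forms in $\geq 4$ variables over non-Archimedean local fields of characteristic $\neq 2$; I would cite it from the standard structure theory of quadratic forms over local fields (e.g.\ \cite{OMeara}, \cite{Gerstein}), noting that the computation of the $u$-invariant is insensitive to the residue characteristic and applies equally to $p$-adic and to equal-characteristic local fields, so that it covers all completions arising from number fields and from function fields of curves over finite fields of odd characteristic. As a bookkeeping remark, when $K$ has no real places --- in particular in the function-field case --- ``sign-universal'' means simply ``universal'', and the argument above then shows that any ADC form in $n \geq 4$ variables over such a Hasse domain is universal.
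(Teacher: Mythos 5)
Your proposal is correct and follows essentially the same route as the paper: the paper's proof is just a terse version of your argument, invoking Hasse--Minkowski plus the automatic representability of every element at the finite (and complex) places when $n \geq 4$, so that $K$-representability of $d \in R$ is equivalent to representability at the real places, from which both implications are formal. Your write-up merely supplies the local details (isotropy for $n \geq 5$, surjectivity of the quaternion norm form for $n = 4$) that the paper leaves implicit.
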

\begin{proof} Indeed, by the Hasse-Minkowski theory of quadratic forms over global fields, any nondegenerate quadratic 
form in at least four variables over the fraction field $K$ is sign-universal.  The result follows immediately from this.
\end{proof}  
\noindent

\subsection{Conjectures on Euclidean Forms over Hasse Domains}

\begin{conj}
\label{FINITENESSCONJ}
For any Hasse domain $R$, there are only finitely many isomorphism classes 
of anisotropic Euclidean forms $q_{/R}$.
\end{conj}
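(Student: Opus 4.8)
\noindent
The plan is to control every anisotropic Euclidean form $q_{/R}$ by two quantities: the number of variables $n$ and the norm $N(\Delta(\Lambda_q)) := \# R/\Delta(\Lambda_q)$ of its discriminant ideal. The number of variables is immediately bounded: since $q$ is anisotropic over $R$ it is anisotropic over $K = \Frac(R)$, and $K$ is a global field of characteristic $\neq 2$, hence of $u$-invariant $4$, so $n \leq 4$. It therefore suffices to fix $n \in \{1,2,3,4\}$ and show there are only finitely many isomorphism classes of $n$-ary anisotropic Euclidean forms over $R$.

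Next I would reduce this to a bound on the discriminant. If $q$ is Euclidean, then by Proposition \ref{EUCIMPLIESMAXPROP} the lattice $\Lambda_q = R^n$ is maximal, so by Lemma \ref{EASYMAXLEMMA} the localization $\Lambda_{q,v}$ is a maximal $R_v$-lattice for every $v \in \Sigma_R$; by Eichler's Maximal Lattice Theorem and Theorem \ref{SHIMURA29.8}, any two maximal lattices in a given local quadratic space are isometric, so $\Lambda_{q,v}$ depends only on $q \otimes K_v$. Consequently the genus $\mathfrak{g}(q)$ is determined by the isometry class of $q$ over $K$. Suppose now we know $N(\Delta(\Lambda_q)) \leq B$ for a constant $B = B(R)$. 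Because $R$ is a Hasse domain it has finite quotients, so there are finitely many ideals of norm $\leq B$; since $\Delta(\Lambda_q) = (\det \operatorname{Gram}(q))$ is principal and $R^\times / R^{\times 2}$ is finite, only finitely many square classes of $K^\times$ occur as discriminants of such forms. A standard local-global argument (by Hasse--Minkowski, once the dimension and the discriminant class are fixed only finitely many configurations of local Hasse--Witt invariants and signatures remain, all but finitely many being forced to be trivial) then shows $q$ lies in one of finitely many $K$-isometry classes; hence $\mathfrak{g}(q)$ is one of finitely many genera, and each is finite by Theorem \ref{SETUP1}. Thus the conjecture reduces to showing that Euclideanness forces $N(\Delta(\Lambda_q))$ to be bounded in terms of $R$.

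This discriminant bound is the real content. Since $q$ Euclidean forces $E(q) \leq 1$, it is enough to prove a lower bound $E(q) \geq c_n \cdot N(\Delta(\Lambda_q))^{1/n}$ with $c_n > 0$ depending only on $n$ (and $R$). In the function-field case I would argue by degrees: if $\Delta(\Lambda_q)$ has large degree then, in every $R$-basis, the Gram matrix has an entry of large degree, and one can exhibit an explicit ``half-period'' vector $x \in K^n$ for which $\inf_{y \in R^n} |q(x-y)| \geq 1$; this is in effect a Riemann--Roch estimate and should go through cleanly. When $R$ is the ring of integers of a number field and $q$ is definite at every archimedean place, the bound is the classical covering-radius inequality of the geometry of numbers: the covering radius $\mu$ of the associated positive definite lattice satisfies $\mu^n \operatorname{vol}(B_1) \geq \operatorname{covol}$, which rearranges into the desired inequality with $c_n$ an explicit constant (and $n \leq 4$, so only four constants are needed).

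The main obstacle is the case of a number field with $q$ \emph{indefinite} at one or more real places, together with the uniformity of $c_n$ over all Hasse domains $R$. Here the required lower bound for $\inf_{y} |q(x-y)|$ is a statement about inhomogeneous minima of indefinite quadratic forms; for $n = 2$ it is classical and is essentially the input of \cite{BSD} (this is exactly why only finitely many real quadratic fields are norm-Euclidean), but a bound of the shape $c_n N(\Delta)^{1/n}$ valid uniformly for $n = 3,4$ and over all number fields $K$ --- and its assembly from place-by-place estimates across the several archimedean completions of $K$ --- is what we have not been able to establish; this is why the assertion is stated here as a conjecture rather than a theorem. A reasonable order of attack is: first the definite case (already enough to recover finiteness of positive definite Euclidean forms over $\Z$), then $n = 2$ indefinite, and finally $n = 3, 4$.
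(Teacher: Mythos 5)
The statement you are proving is stated in the paper as a \emph{conjecture}: the paper gives no proof of it, and only verifies special cases later (definite forms over $\Z$ via Nebe's classification of lattices of covering radius $<1$, Theorem \ref{NEBETHM}, and definite forms over $\F[t]$ via class-number finiteness). Your proposal, as you yourself acknowledge in the final paragraph, also does not prove it: the discriminant lower bound $E(q) \geq c_n N(\Delta(\Lambda_q))^{1/n}$ in the indefinite number-field case is exactly the missing content, so what you have is a plausible reduction strategy, not a proof. That part of your assessment is honest and reasonable.

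However, there is also a concrete error in your very first step. The claim that anisotropy over $K$ forces $n \leq 4$ because ``$K$ is a global field, hence of $u$-invariant $4$'' is false whenever $K$ is a number field with a real place: over $\Q$, for instance, every positive definite form is anisotropic, in every dimension. The $u$-invariant-$4$ statement is valid only for global fields with no real places (the paper invokes it only for $K = \F(t)$). Indeed the paper's own Theorem \ref{NEBETHM} exhibits $70$ positive definite Euclidean forms over $\Z$, among them a quinary one, directly contradicting your bound $n \leq 4$ for $R = \Z$. Consequently, bounding the dimension is itself part of the problem in the definite number-field case: the covering-radius inequality $\mu^n \operatorname{vol}(B_1) \geq \operatorname{covol}$ that you cite bounds the discriminant only \emph{for fixed} $n$, and does not by itself rule out Euclidean lattices of large dimension; ruling those out is a nontrivial geometry-of-numbers statement and is essentially what Nebe's classification accomplishes over $\Z$. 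So your reduction should be restructured: dimension and discriminant must both be bounded, the $u$-invariant shortcut is available only in the function-field (and totally imaginary) setting, and the definite and indefinite archimedean behaviors have to be treated separately before the genus-finiteness argument via Theorem \ref{SETUP1} can be applied.
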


\begin{conj}
\label{CLASSNUMBERONECONJ}
\label{MAINCONJ}
Let $q$ be an anisotropic Euclidean quadratic form over a Hasse domain $R$.  
Then $q$ has class number one.
\end{conj}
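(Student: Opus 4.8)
Since Conjecture~\ref{MAINCONJ} is, well, a conjecture, what follows is a strategy together with an honest assessment of where the difficulty lies. The starting point is a chain of reductions. By Theorem~\ref{MAINTHM} the form $q$ is ADC; by Proposition~\ref{EUCIMPLIESMAXPROP} and Corollary~\ref{LOCALLYMAXCOR} the quadratic $R$-lattice $\Lambda_q$ is maximal, and $q$ remains Euclidean --- hence maximal --- over every completion $R_v$. By Theorem~\ref{SETUP4} these facts already force $q$ to be regular, so the entire content of the conjecture is to promote ``$q$ is regular'' to ``$\mathfrak{g}(q) = \{q\}$''. Reformulated lattice-theoretically, and using Hasse--Minkowski (which identifies the quadratic $K$-spaces underlying forms in a common genus), the statement to be proved is: \emph{if a quadratic space $(V,q)$ over $K$ admits a Euclidean $R$-lattice, then all maximal $R$-lattices in $(V,q)$ lie in a single isometry class.}

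A second preliminary step is to reduce to a finite list of forms, i.e.\ to establish Conjecture~\ref{FINITENESSCONJ} en route. The hypothesis $E(q) < 1$ says exactly that the ``covering radius'' $\sup_x \inf_y |q(x-y)|$ is $< 1$; comparing the volume of the norm-$\le 1$ ellipsoid to the covolume of $\Lambda_q$ (a Minkowski-type estimate) bounds both the rank $n$ and the discriminant of $q$ in terms of the residue cardinalities of $R$ (for $R = \Z$ this already confines anisotropic Euclidean forms to rank at most $12$, the unit-ball volume dropping below $1$ in dimension $13$). Since over a Hasse domain there are only finitely many isometry classes of forms of bounded rank and bounded discriminant, we are reduced to verifying the conjecture for each member of an explicit finite list --- in practice a short list of ``root-lattice-like'' forms and their rescalings.

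Now the main dichotomy. If the space $(V,q)$ is isotropic at some place $v \in S$ --- for instance whenever $K$ has a complex place, or $q$ is indefinite at a real place, or, in the function-field case, is isotropic at a place at infinity --- and $n \ge 3$, then I would invoke the strong approximation theorem for the spin group (O'Meara \cite{OMeara}, $\S 104$): every proper spinor genus in $\mathfrak{g}(q)$ is then a single isometry class, so $h(q)$ equals the number of proper spinor genera. It remains to see that a \emph{maximal} lattice has exactly one proper spinor genus in its genus; here one uses that the maximal lattice at the isotropic place $v$ splits off a hyperbolic plane (Theorem~\ref{SHIMURA29.8} together with Eichler's Maximal Lattice Theorem), so its local spinor-norm group is all of $K_v^{\times}$, which annihilates the spinor-genus obstruction. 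The low-rank exceptions are handled directly: rank-one forms always have class number one, and a rank-two form is governed by a classical binary class-number computation that the smallness forced by $E(q)<1$ resolves. (When $n \ge 4$ one may also bring in Proposition~\ref{SETUP5}: ADC is equivalent to sign-universality, which together with anisotropy is already very restrictive.)

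The genuinely hard case --- and the main obstacle --- is when $(V,q)$ is anisotropic at every place of $S$, i.e.\ ``totally definite'': here strong approximation is unavailable and there is no known uniform reason for the class number to be one. My plan is to fall back on the finite classification above and verify class number one for each totally definite anisotropic Euclidean form individually, most efficiently via the Minkowski--Siegel mass formula --- the mass $\sum_{q' \in \mathfrak{g}(q)} |\Aut(q')|^{-1}$ is a product of local densities, and the smallness of the discriminant forced by $E(q)<1$ should make it small enough that $h(q)=1$ follows. Carrying this out rigorously for \emph{every} Hasse domain simultaneously --- in particular dealing with the dyadic completions, where local densities and the structure of maximal lattices are notoriously delicate, and with the function-field case --- is where the real work resides, and is the reason the statement is recorded here as a conjecture rather than a theorem.
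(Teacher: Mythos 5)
You are right that this statement is a conjecture: the paper contains no proof of it, and offers only partial evidence --- for definite forms over $\Z$ it is verified by Nebe's classification of lattices of covering radius less than one (Theorem \ref{NEBETHM}, the $70$ forms, with class number one checked by explicit genus computation), and for definite forms over $\F[t]$ with $\# \F > 3$ it follows from Bureau's theorem that every \emph{regular} definite form already has class number one, combined with the chain Euclidean $\Rightarrow$ ADC $\Rightarrow$ regular from Theorems \ref{MAINTHM} and \ref{SETUP4}. Your opening reductions (Euclidean $\Rightarrow$ ADC $\Rightarrow$ regular, maximality of $\Lambda_q$ locally and globally via Proposition \ref{EUCIMPLIESMAXPROP} and Corollary \ref{LOCALLYMAXCOR}) agree with the paper's point of view, and your covering-radius volume estimate is exactly the mechanism behind the $\Z$-definite case. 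So as an honest strategy document your proposal is reasonable; it is not, and does not claim to be, a proof, and in that respect it matches the status of the statement in the paper.

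Three specific steps in your plan are weaker than you present them. First, the Minkowski-type estimate does not ``establish Conjecture \ref{FINITENESSCONJ} en route'' for a general Hasse domain: for indefinite forms the region $|q(x)| < 1$ is unbounded and has infinite volume, and for $S$-integer rings or function fields the canonical norm is an ideal norm rather than a Euclidean length, so the covering-radius/covolume comparison has no direct analogue; finiteness in these cases is itself open in the paper. Second, in the isotropic case your spinor-genus step overreaches: the number of proper spinor genera is the index $[J_K : P_D J^{\Lambda}]$ involving the local spinor norm groups at \emph{all} places, and a single place where $\theta(O^+(\Lambda_v)) = K_v^{\times}$ only removes that one local component; over $\Q$ (class number one, units contained in the spinor norms at the other primes) the conclusion is fine, but over a Hasse domain with nontrivial class group the obstruction group need not die, so ``maximal $\Rightarrow$ one spinor genus'' is not justified as stated. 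Third, the totally definite case, which you correctly isolate as the real difficulty, is precisely the content of the conjecture; note that the paper's function-field evidence goes through a route your plan does not use --- Bureau's ``regular implies class number one'' --- which disposes of the totally definite case over $\F[t]$ ($\# \F>3$) without any mass-formula computation, whereas over totally real number fields nothing of the sort is known and your mass-formula plan remains a plan.
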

\noindent
Conjecture \ref{CLASSNUMBERONECONJ} has a striking consequence.  Consider 
the set $\mathcal{S}_1$ of all class number one totally definite quadratic forms 
defined over the ring of integers of some totally real number field.    Work of Siegel shows that $\mathcal{S}_1$ is a finite set.  Thus Conjecture \ref{CLASSNUMBERONECONJ} implies the following 
result, which we also state as a conjecture.  

\begin{conj}
\label{TOTALLYREALCONJ}
As $R$ ranges through all rings of integers of totally real number fields, 
there are only finitely many totally definite Euclidean quadratic forms 
$q_{/R}$.
\end{conj}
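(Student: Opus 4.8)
The plan is to attempt a direct proof by geometry of numbers, reducing the statement to bounding three quantities: the degree $r = [F:\Q]$ of the totally real field, the rank $n$ of the form, and the (absolute) discriminant $N_{F/\Q}\disc q$ of the form. If all three are bounded by absolute constants, the conjecture follows from classical finiteness inputs: by Hermite--Minkowski there are only finitely many totally real fields $F$ with $d_F$ below the resulting bound, and over each such $R$ Hermite's finiteness theorem for definite lattices yields finitely many classes of totally definite forms of bounded rank and bounded discriminant. Combining over the finitely many admissible fields closes the argument, so everything rests on the three bounds. Throughout, $q$ totally definite is anisotropic, so by the Remark preceding Proposition \ref{EUCLIDRINGFORMPROP} Euclideanity means that for all $x \in K^n$ there is $y \in R^n$ with $|q(x-y)| < 1$.

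First I would make the covering-radius interpretation explicit. Let $\sigma_1,\dots,\sigma_r$ be the real embeddings of $F$ and $\Phi \colon F^n \hookrightarrow \R^{rn}$, $z \mapsto (\sigma_1(z),\dots,\sigma_r(z))$, the Minkowski embedding, under which $\Lambda := \Phi(R^n)$ is a full lattice of Euclidean covolume $d_F^{n/2}$. Since the canonical norm of $w \in R$ is $|N_{F/\Q}(w)|$, for $z \in K^n$ we get $|q(z)| = \prod_{i=1}^r q^{\sigma_i}(\sigma_i(z))$, where $q^{\sigma_i}$ is the positive definite form obtained by applying $\sigma_i$ to the coefficients. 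Writing $F(x) = \prod_{i=1}^r q^{\sigma_i}(x_i)$ for $x = (x_1,\dots,x_r) \in \R^{rn}$, density of $K^n$ and continuity of $F$ show that Euclideanity is equivalent to the covering condition $\sup_{x \in \R^{rn}} \inf_{\lambda \in \Lambda} F(x-\lambda) < 1$.

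The key structural point is that both $F$ and $\Lambda$ are invariant under the diagonal action of $R^{\times}$, since $u \in R^\times$ scales the $i$-th block by $\sigma_i(u)$ and $\prod_i \sigma_i(u)^2 = N_{F/\Q}(u)^2 = 1$. The plan is to pass to the quotient by this action: by reduction theory for norm forms (the mechanism behind Minkowski's bound and the finiteness of the class number) the body $\{F \le 1\}$ has finite volume modulo $R^\times$, controlled by the regulator $R_F$ and by $(N_{F/\Q}\disc q)^{-1/2}$. One then seeks to convert the covering condition into an upper bound of the form
\[ d_F^{n/2}\,(N_{F/\Q}\disc q)^{1/2} \ \le\ C\, R_F \]
for an absolute constant $C$. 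Granting this, the Odlyzko--Minkowski lower bounds $d_F \ge A^{r}$ (some $A>1$) together with the Brauer--Siegel estimate $R_F \le d_F^{1/2 + o(1)}$ would force, for $n \ge 2$, both $d_F$ (hence $r$) and $N_{F/\Q}\disc q$ to be bounded; the small-rank residue is handled separately.

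The main obstacle is precisely the covering-to-covolume step. For $r \ge 2$ the body $\{F \le 1\}$ has infinite Euclidean volume -- one block may be inflated while another is deflated at no cost to $F$ -- so the classical estimate ``a covering body has volume at least the covolume'' gives nothing. The unit action removes exactly the $(r-1)$-dimensional family of these unbounded directions, but when $rn > r-1$ the quotient $\R^{rn}/R^{\times}$ is very far from compact, so the finiteness of the unit-quotient volume of $\{F \le 1\}$ does not by itself bound $\operatorname{covol}(\Lambda)$; one needs an effective reduction theory simultaneously controlling the regulator and the shape of a fundamental domain, and it is unclear that the resulting inequality survives the growth of the unit rank with $r$. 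A further subtlety appears in low rank: by Proposition \ref{EUCLIDRINGFORMPROP} the rank-one case reduces to the finiteness of norm-Euclidean totally real fields, itself delicate (cf. Example 1.7). These difficulties are why the statement is presented as a conjecture rather than a theorem, and why the route actually available is the indirect one, deducing it from Conjecture \ref{CLASSNUMBERONECONJ} (Euclidean $\Rightarrow$ class number one) together with Siegel's finiteness of the set $\mathcal{S}_1$ of class-number-one totally definite forms.
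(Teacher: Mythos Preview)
The statement is a \emph{conjecture} in the paper and is not proved there. The paper's entire treatment is the sentence preceding it: Conjecture~\ref{TOTALLYREALCONJ} follows from Conjecture~\ref{CLASSNUMBERONECONJ} together with Siegel's theorem that the set $\mathcal{S}_1$ of class-number-one totally definite forms over rings of integers of totally real fields is finite. You reach exactly this conclusion in your final paragraph, so on the bottom line you agree with the paper.

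The bulk of your write-up is a direct geometry-of-numbers attack that the paper does not attempt. You are candid that it does not go through, and you correctly isolate the obstruction: for $r\ge 2$ the body $\{F\le 1\}$ has infinite Euclidean volume, so there is no immediate covering-to-covolume inequality, and quotienting by the rank-$(r-1)$ unit action does not obviously help when the ambient dimension is $rn$. Two small technical cautions on the sketch itself. First, the passage from the Euclidean condition on $K^n$ to the real covering condition $\sup_{x\in\R^{rn}}\inf_{\lambda\in\Lambda}F(x-\lambda)<1$ is more delicate than ``density and continuity'': $F$ is not proper (one block can shrink while another grows), so the inner infimum need not be attained, and strict inequalities do not pass to closures without further argument. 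Second, your rank-one reduction to the finiteness of norm-Euclidean totally real fields is itself a hard open-type problem beyond the quadratic case handled in \cite{BSD}, so even granting the higher-rank bounds this case would remain. None of this undermines your conclusion; it simply reinforces why the paper leaves the statement as a conjecture and records only the conditional derivation via Conjecture~\ref{CLASSNUMBERONECONJ}.
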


\subsection{Definite Euclidean forms over $\Z$} \textbf{} \\ \\ \noindent
In the case of $R = \Z$, Conjecture \ref{FINITENESSCONJ} is intimately related to fundamental problems in the geometry of numbers.  Especially, 
the classification of definite Euclidean forms $q_{/\Z}$ can be rephrased 
as the classification of all integral lattices in Euclidean space with 
\textbf{covering radius} strictly less than $1$. \\ \indent  
This problem has been solved by G. Nebe \cite{Nebe}, subject to the following proviso.  Nebe's paper contains $69$ Euclidean lattices.  Before becoming 
aware of \cite{Nebe} W.C. Jagy and I had been independently searching 
for Euclidean lattices.  Our search was not exhaustive, i.e., we looked for 
and found Euclidean lattices in various places but without any claim of finding 
all of them.  When we learned of Nebe's work we compared out list to hers and 
found that her list contained several lattices that we did not have.  However, one of our lattices does not appear on Nebe's list, 
\[q(x_1,x_2,x_3,x_4,x_5) = x_1^2 + x_1 x_4 + x_2^2 + x_2 x_5 + x_3^2 + x_3x_5 + x_4^2 + x_4x_5 + 2x_5^2. \]
We contacted Professor Nebe and she informed us that this lattice was not 
included due to a simple oversight in her casewise analysis.  So we get the 
following result.
\begin{thm}(Nebe)
\label{NEBETHM}
There are precisely $70$ positive definite Euclidean quadratic forms over $\Z$.  
All of these lattices have class number one.
\end{thm}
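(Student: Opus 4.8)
The plan is to translate the statement into a problem about lattices and then reduce to a finite, computer-assisted verification. First I would observe that a positive definite quadratic form $q$ in $n$ variables over $\Z$ is precisely an integral lattice $L$ of rank $n$, and that, $q$ being anisotropic, $q$ is Euclidean iff $E(q) = \sup_{x \in \Q^n} \inf_{y \in \Z^n}|q(x-y)| < 1$. Since $q$ is continuous for the real topology and $\Q^n$ is dense in $\R^n$, one has $E(q) = \mu(L)^2$, where $\mu(L)$ is the covering radius of $L$; moreover the supremum defining $\mu(L)$ is the maximum of a continuous $L$-periodic function on a compact fundamental domain, hence is attained, so $q$ is \emph{not} Euclidean exactly when $\mu(L) \geq 1$. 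Thus the positive definite Euclidean forms over $\Z$ are exactly the integral lattices of covering radius strictly less than $1$, and it remains to enumerate these.

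Next I would cut the search down to finitely many lattices. The packing radius is at most the covering radius, so $\tfrac14\min(L) \leq \mu(L)^2 < 1$ and hence $\min(L) \in \{1,2,3\}$. The translates of the ball of radius $\mu(L)$ by $L$ cover $\R^n$, so $\operatorname{vol}(B_n(\mu(L))) \geq \operatorname{covol}(L)$; writing $q(x) = x^{\mathsf T}Ax$, the polar form has integral Gram matrix $2A$, so $\operatorname{covol}(L) = \sqrt{\det A} \geq 2^{-n/2}$, and $\mu(L) < 1$ forces $\operatorname{vol}\bigl(B_n(\sqrt{2})\bigr) > 1$, which fails once $n$ is large; this bounds the rank by an explicit constant (crudely a few dozen, much smaller in reality), and the same inequality bounds $\det A$ from above. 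Finally, by Proposition \ref{EUCIMPLIESMAXPROP} the lattice $L$ is maximal. Given bounds on the rank, on $\min(L)$, and on $\det A$, together with maximality, only finitely many candidate lattices survive.

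The decisive step is then the enumeration itself: list all integral lattices within these bounds (in practice by traversing the standard tables of integral lattices of small rank, minimum, and determinant), compute the covering radius of each, and keep those with $\mu(L) < 1$. This is where essentially all the work lies, and I expect it to be the main obstacle: the casewise analysis is intricate and easy to get wrong, which is exactly why the first search returned $69$ lattices and overlooked the five-variable form $q(x_1,\dots,x_5) = x_1^2 + x_1x_4 + x_2^2 + x_2x_5 + x_3^2 + x_3x_5 + x_4^2 + x_4x_5 + 2x_5^2$ above. Reinstating it gives exactly $70$. By contrast, the reductions in the first two steps are routine once the relevant bounds are isolated.

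For the final assertion, I would compute, for each of the $70$ forms, its genus from the local data at all places (using, e.g., the description of $R_v$-classes from \S 4), and then check by the Minkowski--Siegel mass formula, or by Kneser's neighbour method, that the genus consists of a single class; in every case $h(q) = 1$. This confirms the statement case by case and is consistent with, though logically independent of, Conjecture \ref{MAINCONJ}.
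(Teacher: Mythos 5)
Your route is essentially the paper's: both rephrase the theorem as the statement that integral lattices of covering radius $<1$ number exactly $70$, both leave the decisive enumeration to the external computation (the paper simply cites Nebe \cite{Nebe} together with the correction supplying the overlooked rank-$5$ lattice), and both settle the class-number-one assertion by explicit genus computation (the paper via MAGMA's {\tt GenusRepresentatives}). Your finiteness bounds are harmless extra scaffolding; the only small gloss is that excluding the boundary case $\mu(L)=1$ requires knowing that the deep holes of a rational lattice are rational points (vertices of the Voronoi cell), not merely that the covering radius is attained at some real point.
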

\noindent
The second sentence in Theorem \ref{NEBETHM} follows easily by explicit 
computation, for instance using the command {\tt GenusRepresentatives} in the MAGMA software package.  Thus Theorem \ref{NEBETHM} verifies Conjecture 
\ref{CLASSNUMBERONECONJ} for definite forms over $\Z$.  

\subsection{Definite ADC forms over $\Z$} 
\textbf{} \\ \\
The work of this paper allows us to classify (in a certain sense) primitive definite ADC forms over $\Z$.  Indeed, by Theorem \ref{SETUP4}, it suffices 
to classify the regular primitive positive definite forms over $\Z$ and 
for each such form $q$ determine whether it is locally ADC.  The theory of 
quadratic forms over $p$-adic integer rings is completely understood, to the 
extent that for a fixed quadratic form $q_{/\Z}$, determining for all 
primes $p$ the set of all elements of $\Z_p$ (resp. $\Q_p$) which are 
$\Z_p$-represented (resp. $\Q_p$-represented) by $q$ is a finite problem.  
So if we could reduce ourselves to a finite set of regular forms, the problem 
would be solved modulo a finite calculation.  Let us see how this procedure 
works out for forms in various dimensions.
\\ \\
{\sc Unary forms}: Let $a \in \Z^{\bullet}$.  Recall Example 2.5: a unary form $q_a(x) = ax^2$ is ADC iff $a$ is squarefree.  \\ \indent 
In fact we have shown that for any UFD or Dedekind domain $R$ and $a \in R^{\bullet}$, the 
unary form $q_a(x) = ax^2$ is ADC iff $\ord_{\pp}(a) \leq 1$ for every 
height one prime ideal $\pp$ of $R$.  But it seems premature to present such results here, since this is an easy special case of a not so easy general problem.  Let us say a form $q(x)$ is \textbf{imprimitive} 
if it can be written as $a q'(x)$ with $a \in R^{\bullet} \setminus R^{\times}$.  
Then we would like to know: if $q'(x)$ is a primitive ADC form, for which 
$a \in R^{\bullet}$ is $a q'(x)$ an ADC form?  We can answer this for unary 
forms but not in general.  We leave the general problem of imprimitive 
forms for a later work.  
\\ \indent
So up to unit equivalence the unique \emph{primitive} ADC unary form over $\Z$ 
is $x^2$.
\\ \\
{\sc Binary forms}: The classical genus theory shows that a regular binary 
form $q(x,y) = ax^2 + bxy + cy^2$ has class number one in the above sense.  There is however a subtlety here in that classes and genera of binary 
quadratic forms $q(x,y)_{/\Z}$ are classically expressed in terms of \textbf{proper equivalence} (i.e., $\SL_2(\Z)$-equivalence).  To get from 
the proper genera to the genera one needs to identify each class with its inverse in the class group: we get a quotient map which has fibers of cardinality one over the order two elements of the class group and cardinality $2$ otherwise.  Thus, in addition to the binary quadratic forms which have proper (form) class number one -- i.e., the \textbf{idoneal} discriminants 
$\Delta = b^2-4ac$ such that the quadratic order of disciminant $\Delta$ 
has $2$-torsion class group -- we need to consider \textbf{bi-idoneal} forms in the sense of \cite{Jagy-Kaplansky} and \cite{Voight}, i.e., forms of order $4$ 
in a class group of type $\Z/4\Z \times (\Z/2\Z)^a$ for $a \geq 0$.  (C.f. Remarks 2.5, 2.6 and 4.6 of \cite{Voight} for a clear explanation of the relationship between binary forms of $\operatorname{GL}_2(\Z)$-genus one and class groups of the above form.)  Voight computes a list of $425$ bi-idoneal discriminants, shows that this list is complete except for possibly one further (very large) 
value, and shows that the Generalized Riemann Hypothesis (GRH) implies the completeness of his list.  These results allow us to give a complete enumeration of primitive binary definite ADC forms over $\Z$, conditionally on GRH.
\\ \\
Again the issue of imprimitive forms requires some additional 
consideration.\footnote{Added, October 2011: we can now handle the imprimitive forms as well.}
\\ \\
Example 4.1: Let $q' = x^2 + y^2$.  Then $q'$ is Euclidean hence ADC.  The form $a q'$ is squarefree 
iff $a$ is squarefree and not divisible by any prime $p \equiv 1 \pmod 4$.  
\\ \\
{\sc Ternary forms:}

\begin{thm}(Jagy-Kaplansky-Schiemann \cite{JKS}) There are at most $913$ primitive positive definite regular forms $q(x_1,x_2,x_3)_{/\Z}$.
\end{thm}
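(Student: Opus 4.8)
The plan is to reduce the classification to a finite computation by combining Watson's arithmetic transformations with the analytic theory of representation numbers of ternary forms. First, unwind the definitions: $q$ is regular iff it $\Z$-represents every $n \in \N$ represented by its genus, equivalently every $n$ represented by $q$ over $\Z_p$ for all primes $p$ (the archimedean place being handled by positivity). Write $r(q,n)$ for the number of representations of $n$ by $q$; by Siegel's mass formula the genus average $r_{\mathrm{gen}}(q,n)$ is an explicit product of local densities, positive exactly when $n$ is represented by the genus, and of size $\asymp \sqrt{n}$ times a bounded arithmetic factor, while $r_{\mathrm{cusp}}(q,n) := r(q,n) - r_{\mathrm{gen}}(q,n)$ is the $n$-th Fourier coefficient of a weight $3/2$ cusp form. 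The content of regularity is therefore the comparison $r_{\mathrm{gen}}(q,n) > |r_{\mathrm{cusp}}(q,n)|$: whenever it holds for all eligible $n$ the form is regular, and by Duke's bound (building on Iwaniec's estimate $|r_{\mathrm{cusp}}(q,n)| \ll_\epsilon d(q)^{A} n^{3/7+\epsilon}$) it can fail, for fixed $q$, only for $n$ of bounded size lying in finitely many square classes.

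Second, I would invoke Watson's transformations: for each prime $p$ there is a map $\lambda_p$ carrying a ternary form $q$ to a ternary form $\lambda_p q$ whose discriminant divides $d(q)$ and is strictly smaller whenever $q$ is not ``$p$-stable'', and under which regularity is preserved. Iterating over all primes connects every regular form to a regular form that is $p$-stable for every $p$; for such a form the discriminant is a bounded set of small primes raised to bounded exponents times an a priori unbounded squarefree part, so Watson reduction compresses the eventual enumeration but does not by itself bound the discriminant.

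Third --- and this is the main obstacle --- I would show that every regular ternary form has discriminant at most an explicit $D_0$. If $d(q)$ is large, choose a squarefree $t$ represented by $q$ over every $\Z_p$ and examine $n = t m^2$ with $m$ coprime to $d(q)$: then $r_{\mathrm{gen}}(q,t m^2) \asymp m \sqrt{t}/\sqrt{d(q)}$ up to bounded factors, while the cusp term contributes $\ll d(q)^{A}(t m^2)^{3/7+\epsilon}$. Requiring the genus term to dominate for all such $n$ forces $d(q)$ to be bounded, provided the level-aspect exponent $A$ is small enough to beat the $d(q)^{-1/2}$ in the main term. This last requirement is exactly the delicate input, and is why the original theorem was partly conditional on GRH: one needs a genuine subconvex, Ramanujan-type bound in the level aspect, not merely the convexity bound. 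Granting it, $d(q) \le D_0$.

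Finally, with $d(q) \le D_0$ the problem is finite: enumerate the genera of primitive positive definite ternary forms of discriminant at most $D_0$ by reduction theory, discard those possessing a local obstruction to regularity at some small $n$, and --- since each inverse Watson step strictly increases the discriminant while staying $\le D_0$ --- reconstruct all regular forms from the $p$-stable ones by finitely many inverse transformations. Carrying out this (large) computation, as Jagy, Kaplansky and Schiemann did, produces the asserted list, of cardinality $913$, which yields the bound. (Verifying that the surviving candidates are genuinely regular, as opposed to merely passing the finite checks, is a further finite verification for all but a handful, which were left conditional.)
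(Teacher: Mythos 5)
The statement you are proving is one the paper does not prove at all: it is quoted from Jagy--Kaplansky--Schiemann \cite{JKS}, and the paper's ``proof'' is the citation, together with the remark that \cite{JKS} exhibits an explicit list of $913$ forms containing every regular ternary form, with regularity itself left unproven for $22$ of them. Measured against what \cite{JKS} actually does, your sketch takes a genuinely different route (analytic: Siegel mass formula plus Iwaniec--Duke bounds on weight $3/2$ cusp form coefficients, plus Watson transformations), and the route has a real gap at exactly the step you flag as ``the main obstacle.'' The lower bound for the genus term $r_{\mathrm{gen}}(q,tm^2)$ for a ternary form is governed by class numbers $h(-D)$ of imaginary quadratic fields, and the bound $h(-D) \gg D^{1/2-\epsilon}$ is Siegel's theorem, which is ineffective; combined with the polynomial but explicit-constant-free level dependence in the cuspidal estimate, the comparison $r_{\mathrm{gen}} > |r_{\mathrm{cusp}}|$ yields only an \emph{ineffective} finiteness statement (this is essentially the Duke--Schulze-Pillot argument). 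It cannot produce an explicit $D_0$, hence cannot produce the number $913$, without a hypothesis such as GRH to make the constants effective. Your parenthetical that ``the original theorem was partly conditional on GRH'' misplaces the conditionality: the completeness of the list of $913$ --- i.e.\ the ``at most $913$'' assertion at issue --- is unconditional in \cite{JKS}; what is conjectural there is the regularity of $22$ of the listed forms, which concerns the opposite inequality. (The GRH-conditional enumeration in this paper occurs for \emph{binary} forms, via Voight's bi-idoneal discriminants, not here.)

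The actual argument of \cite{JKS} is elementary and effective, contrary to your claim that Watson-style reduction ``does not by itself bound the discriminant'': Watson had already shown by elementary means that regular ternary forms have explicitly bounded invariants, and Jagy--Kaplansky--Schiemann combine such descent arguments (regularity is preserved under the transformations, which strictly shrink the discriminant unless the form is already reduced at $p$) with explicit congruence arguments producing, for any form of too-large discriminant, a small integer represented by the genus but not by the form, and then with a computer enumeration of the finitely many surviving discriminants. So the finiteness and the explicit bound come from elementary arithmetic, not from subconvexity. If you want to salvage your analytic approach, you should present it as a proof of finiteness only (ineffective, or effective under GRH), and state clearly that the explicit count $913$ requires the elementary discriminant bound plus the finite search; as written, the chain of reasoning from Duke's bound to ``$d(q) \le D_0$'' to ``$913$'' does not go through.
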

\noindent
More precisely, in \cite{JKS} the authors write down an explicit list of $913$ 
definite ternary forms such that any regular form must be equivalent to some 
form in their list.  Further they prove regularity of $891$ of the forms 
in their list, whereas the regularity of the remaining $22$ forms is conjectured 
but not proven.  
\\ \indent
Fortunately, all $22$ of the forms whose regularity was not shown in \cite{JKS} 
turn out \emph{not} to be ADC-forms.  To show this one need only 
supply a \textbf{non-ADC certificate}, i.e., a pair $(a,b) \in \Z^2$ 
such that $q$ $\Z$-represents $a^2 b$ but not $b$.  Jagy has found non-ADC 
certificates for all $22$ of the possibly nonregular ternary forms above 
and indeed for the majority of the $913$ regular forms as well: his 
computations leave a list of $104$ primitive definite ternary regular forms which are probably ADC.  As above, we are left with a (nontrivial) finite 
local calculation to confirm or deny the ADC-ness of each of these $104$ forms. 
\\ \\
{\sc Quaternary Forms:} By Proposition \ref{SETUP5}, a quadratic form $q_{/\Z}$ in at least four variables is ADC iff it is sign-universal.  Thus the following result solves the problem for us when $n = 4$.
\begin{thm}(Bhargava-Hanke \cite{Bhargava-Hanke})
\label{BHTHM1} There are precisely $6436$ positive definite sign-universal forms $q(x_1,x_2,x_3,x_4)_{/\Z}$.
\end{thm}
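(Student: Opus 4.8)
The plan is first to translate the statement and then to follow the escalation method of Bhargava and Hanke. Over $\Z$ there is exactly one real place, and a positive definite form $\R$-represents precisely the nonnegative reals; since an anisotropic form automatically represents $0$, a positive definite $q(x_1,\ldots,x_4)_{/\Z}$ is sign-universal exactly when it $\Z$-represents every positive integer, i.e., is \emph{universal} in the classical sense. (Recall that here a quadratic form has integer \emph{coefficients} rather than an integer Gram matrix, so we are in the ``integer-valued'' setting, where the relevant threshold is $290$ rather than $15$.) Thus the task is to show that, up to $\mathrm{GL}_4(\Z)$-equivalence, there are exactly $6436$ positive definite universal quaternary forms.

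Second, I would build the escalator tree. Call the least positive integer not $\Z$-represented by $q$ its \emph{truant} $t(q)$; an \emph{escalation} of $q$ is a quadratic lattice of one higher rank containing $q$ and representing $t(q)$. Because the new successive minimum is then bounded by $t(q)$, each form has only finitely many escalations up to equivalence, so rooting at the zero form (truant $1$) produces a finite tree: the unique unary escalator $\langle x^2\rangle$, then the binary escalators, the ternary ones, and then the quaternary ones, with the tree terminating once one reaches the five-variable escalators (a separate Lemma shows these are all already universal). Any universal quaternary form $q$ contains a chain $\langle 1\rangle\subset L_2\subset L_3\subset L_4$ obtained by successively adjoining a vector representing the current truant, so $q$ is itself (up to equivalence) a four-variable escalator; hence the universal quaternary forms sit inside a finite, explicitly listable set of candidates.

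Third --- the analytic core --- I would establish the ``$290$-Theorem'': a positive definite integer-valued form is universal iff it $\Z$-represents the $29$ critical integers $1, 2, 3, 5, 6, 7, 10, 13, 14, 15, 17, 19, 21, 22, 23, 26, 29, 30, 31, 34, 35, 37, 42, 58, 93, 110, 145, 203, 290$. For a fixed quaternary candidate $q$ this reduces to showing $q$ represents every $d$ satisfying the finitely many congruence and sign conditions at the various primes. Writing the genus theta series of $q$ as an Eisenstein series plus a cusp form, the Siegel mass formula identifies the Eisenstein coefficient of $d$ as a product of local densities, which is positive of size $d^{1-\epsilon}$ whenever the local conditions hold and $d$ is not annihilated by an anisotropic prime; Deligne's bound (Ramanujan--Petersson) controls the cusp-form coefficient by $O(d^{1/2+\epsilon})$, so every sufficiently large admissible $d$ is represented, and a finite machine verification disposes of the small $d$ together with the cases obstructed by an anisotropic prime (where the Eisenstein term can vanish along an entire arithmetic progression and $d$ must instead be represented via a sublattice or an auxiliary form). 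Applying this test to each candidate singles out the universal ones, and counting them yields $6436$.

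I expect the principal obstacle to be the $290$-Theorem itself: the effective lower bounds on products of local densities, and especially the exhaustive treatment of the finitely many ``anisotropic'' configurations where the crude Eisenstein estimate breaks down, are where the genuine difficulty lies. A close second is sheer scale: the escalator tree has thousands of nodes, and each quaternary candidate demands its own genus computation and small-value check, so the enumeration is feasible only with substantial machine computation.
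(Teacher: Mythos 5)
The paper gives no argument for this statement at all---Theorem \ref{BHTHM1} is quoted directly from \cite{Bhargava-Hanke}---and your outline is an accurate summary of that cited work's strategy: the reduction of sign-universality over $\Z$ to representing every positive integer in the integer-valued (coefficient, not Gram-matrix) setting, the truant/escalator tree bounding the candidates, and the $290$-theorem proved by splitting the theta series into an Eisenstein part controlled by local densities and a cusp form controlled by Deligne's bound, with the anisotropic-prime cases and small values handled by computation. So you are following essentially the same route as the source the paper relies on; the analytic and computational heavy lifting you correctly flag as the main obstacle is exactly what \cite{Bhargava-Hanke} supplies and is not reproduced in this paper.
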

\noindent
So there are precisely $6436$ positive definite quaternary ADC forms over $\Z$.
\\ \\
{\sc Beyond Quaternary Forms:} It seems hopeless to classify positive definite sign-universal forms in $5$ or more variables.  In contrast to all cases above, there are most 
certainly \emph{infinitely many} such primitive forms, e.g. $x_1^2 + \ldots + x_{n-1}^2 + D x_n^2$.  More generally, any form with a sign-universal subform is obviously sign-universal, and this makes the problem difficult.  However, there is the 
following relevant result.   
\begin{thm}(Bhargava-Hanke \cite{Bhargava-Hanke})
\label{BHTHM2}
A positive definite form $q(x_1,\ldots,x_n)_{/\Z}$ is sign-universal if and only if it integrally represents the first $290$ positive integers.
\end{thm}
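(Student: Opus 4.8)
The plan is to treat the two implications separately. One is immediate: for a positive definite form over $\Z$ the only Archimedean condition is positivity, so a sign-universal positive definite form represents every positive integer and in particular $1, 2, \ldots, 290$. The substance is the converse, for which I would run the \textbf{escalation} machinery of Bhargava. Call a positive definite form \emph{universal} if it represents all positive integers, and define the \emph{truant} of a non-universal form to be the least positive integer it omits. Beginning from the zero form, at each stage one adjoins to the current lattice a vector $v$ whose norm is bounded in terms of the truant and which makes the enlarged lattice represent that truant; since positive-definiteness bounds the off-diagonal Gram entries once the diagonal ones are bounded, each lattice admits only finitely many such escalations, so one obtains a rooted tree of \emph{escalator lattices} that branches finitely. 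First I would show this tree has finite depth: every escalator of sufficiently large rank --- in the situation at hand, rank five --- is universal, so the process terminates. The organizing principle is that universality passes to overlattices, so only non-universal escalators need to be escalated; since the set of all truants appearing in the pruned tree is finite, any form representing every integer in that set can be filtered by escalators up to a universal leaf and is therefore universal. Bookkeeping then identifies the set of truants with an explicit list of twenty-nine integers whose largest member is $290$, which yields the non-trivial implication (and the sharper fact that integral representation of those twenty-nine critical values already suffices).

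The analytic heart is verifying that every escalator the tree declares a leaf is genuinely universal; by the finiteness above these live in four or five variables. For such a form $q$ one decomposes the theta series $\theta_q = E_q + g_q$ into an Eisenstein part and a cusp form of weight $k = n/2 \in \{2, 5/2\}$. When $n \geq 4$ the Eisenstein coefficient $r_{E_q}(m)$ is a product of local densities and grows like $m^{k-1}$ for every $m$ that is represented by $q$ over $\R$ and over each $\Z_p$, whereas known bounds on Fourier coefficients of cusp forms (Deligne's bound in the integral-weight quaternary case, classical Hecke-type estimates already sufficing in the half-integral-weight quinary case) give $r_{g_q}(m) = O_\epsilon(m^{k-1-\delta})$ for some $\delta > 0$; hence $r_q(m) > 0$ for every locally represented $m$ beyond an explicit threshold, and the finitely many remaining $m$ are checked directly. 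The genuinely delicate point is the quaternary case: when $q$ is anisotropic at a prime $p$ the Eisenstein coefficient is no longer uniformly of size $m^{k-1}$ and the crude comparison fails, so one instead analyzes a lower-dimensional sublattice that already represents enough, or escalates once more into a quinary form --- isotropic at every finite place, where the clean estimate applies --- clearing the remaining finitely many honest exceptions by hand.

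The step I expect to be the real obstacle is not any single idea but the scale and precision of the ensuing finite computation: enumerating the escalator tree (several thousand four-dimensional escalators, the bulk of them already universal --- cf.\ the $6436$ universal quaternaries of Theorem~\ref{BHTHM1} --- and the remainder requiring a fifth escalation), making the cusp-form estimate effective with explicit constants form by form (equivalently, computing enough Fourier coefficients to cover the residual range), and carrying out the anisotropic-prime analysis case by case. It is precisely the scale of this verification that makes \cite{Bhargava-Hanke} the natural reference; we quote their result rather than reproduce the argument.
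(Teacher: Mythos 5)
The paper gives no proof of this statement at all --- it is quoted directly from \cite{Bhargava-Hanke} --- and your proposal, after correctly noting that the forward direction is trivial for positive definite forms, sketches exactly the escalation-plus-theta-series strategy of that cited work (truants, the finitely branching escalator tree terminating in universal leaves, the $29$ critical integers with maximum $290$, the Eisenstein/cusp decomposition with Deligne-type bounds and the anisotropic-prime difficulty) and then defers, just as the paper does, to the reference for the massive finite verification. So you are taking essentially the same route as the paper: an accurate summary of Bhargava--Hanke followed by citation rather than an independent argument, which is all the paper itself offers.
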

\noindent
Thus a positive definite integral form $q(x_1,\ldots,x_n)$, $n \geq 4$, is ADC iff it represents the integers listed in Theorem \ref{BHTHM2}.  This gives a kind of classification for definite ADC forms in at least five variables, and 
one can probably do no better than this.

\subsection{Definite ADC forms over $\F[t]$}
\textbf{} \\ \\ \noindent
Let $\F$ be a finite field of odd order, $\delta \in \F^{\times} \setminus \F^{\times 2}$, $R = \F[t]$ be endowed with its canonical norm, $K = \F(t)$, and $\infty$ be the infinite place of $K$ (corresponding to the valuation $v_{\infty}(\frac{f}{g}) = \deg(g) - \deg(f)$), so that $K_{\infty} = K((\frac{1}{t}))$.  
\\ \\
Recall that $K$ has $u$-invariant $4$: i.e., the maximum dimension of an anisotropic quadratic form over $R$ is $4$.  We call a quadratic form $q_{/R}$ 
\textbf{definite} if $q$ is anisotropic as a quadratic form over $K_{\infty}$: 
in particular, such forms are aniostropic.   
\\ \\
Thus we we get a problem analogous to the $R = \Z$ case: find all 
definite forms over $\F[t]$ which are Euclidean and which are ADC forms.  There 
are however some significant differences from the $R = \Z$ case.  We saw one 
above: we can \emph{a priori} restrict to forms of dimension at most $4$.  Here 
is another striking difference.
\begin{thm}(Bureau \cite{Bureau})
Suppose that $\# \F > 3$.  Then every regular definite form $q_{/\F[t]}$ has 
class number one.
\end{thm}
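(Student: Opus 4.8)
The plan is to show that a regular definite form $q$ over $R = \F[t]$ is the unique class in its genus, by combining a dimension bound, the spinor-genus decomposition of a genus, and the regularity hypothesis as a replacement for strong approximation. First I would record the dimension reduction: since $q$ is definite it is anisotropic over the nondyadic local field $K_{\infty}$, which has $u$-invariant $4$, so $n = \dim q \leq 4$. This is what makes a complete analysis feasible and is the first place the function-field setting departs from $\Z$.

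Next I would invoke the spinor-genus theory (as in \cite{OMeara}): the classes in $\mathfrak{g}(q)$ partition into proper spinor genera, whose number is the idele-class index
\[ [\, J_K : K^{\times} \prod_v \theta(O^{+}(L_v)) \,], \]
where at $v = \infty$ one uses the spinor norms of the full orthogonal group of the anisotropic space $V_{\infty}$ rather than those of a lattice. I would compute this index for $K = \F(t)$. Because $\F[t]$ is a PID, $\Pic$ is trivial and $K^{\times}$ together with the finite-place unit groups already exhausts the finite ideles; the only remaining contribution comes from $\infty$, and a direct computation of $\theta(O^{+}(V_{\infty}))$ shows it suffices to force the index to be $1$. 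Hence there is a single spinor genus, and $h(q)$ reduces to the number of classes in one spinor genus.

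The crux is then to show a single spinor genus contains a single class. In the indefinite case this is Eichler's theorem, via strong approximation for $\operatorname{Spin}(V)$ away from $\{\infty\}$; but $V_{\infty}$ is anisotropic, so $\operatorname{Spin}(V)(K_{\infty})$ is compact and strong approximation fails precisely here. My plan is to substitute an Aubry--Davenport--Cassels style descent, in the spirit of Theorem \ref{MAINTHM}, using regularity in place of $K$-solvability: given $q'$ in the genus of $q$, choose a vector $w' \in R^n$ with $d := q'(w')$ of least degree; over the PID $\F[t]$ such a minimal vector spans a unimodular orthogonal summand, so $q' \cong \langle d\rangle \perp q''$. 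Since $q'$ represents $d$, its genus represents $d$, whence by regularity $q$ represents $d$ as well; taking a minimal representative splits $q \cong \langle d\rangle \perp q_0$, and local Witt cancellation (Theorem \ref{SETUP2} supplying the local data) places $q_0$ and $q''$ in a common genus of rank $n-1$. Iterating and matching the successively split summands then builds an $R$-isometry $q \cong q'$.

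I expect the decisive difficulty to lie in this last step, namely in replacing strong approximation for the anisotropic-at-$\infty$ spin group, and two points carry the weight. First, one must prove that minimal vectors of a definite $\F[t]$-lattice split off unimodularly; this is exactly where $\# \F > 3$ is indispensable, since for small $\#\F$ the extra units and short vectors obstruct the splitting, and one anticipates genuine class-number-greater-than-one examples at $\# \F = 3$ (so the hypothesis is sharp). Second, one must propagate the regularity input down to the orthogonal complement so the induction on dimension closes: regularity of $q$ gives that $q$ represents everything its genus represents, but transferring this to $q_0 = \langle d \rangle^{\perp}$ requires showing that representations of $q_0$ lift to representations of $q$ \emph{orthogonal} to $w$, using the unimodularity of $\langle d\rangle$ and the anisotropy at $\infty$. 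Making these two reductions precise, uniformly across dimensions $2$, $3$, and $4$, is the technical heart of Bureau's theorem.
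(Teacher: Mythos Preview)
The paper does not supply its own proof of this theorem: it is stated with attribution to Bureau and the citation \cite{Bureau}, and the text immediately moves on to consequences without any argument or sketch. So there is nothing in the paper against which to compare your proposal.

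As to the proposal itself, the outline is plausible in its broad strokes (dimension bound from $u(K_{\infty}) = 4$, spinor-genus count, then an analysis within a spinor genus), but the heart of your plan has a genuine gap that you yourself flag without closing. Your induction requires, at the second step, that the orthogonal complement $q_0$ represent the minimum $d'$ of $q''$; this needs $q_0$ to inherit enough of the regularity of $q$, and there is no mechanism in your sketch for this. Regularity is a global statement about \emph{all} genus-representable elements, and passing to an orthogonal complement does not obviously preserve it: a representation $q(y) = d'$ with $d'$ represented by the genus of $q_0$ need not be realizable orthogonal to your chosen $w$, and unimodularity of $\langle d\rangle$ alone does not force this. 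Without a concrete argument here the induction stalls after one step.

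Two smaller points: your assertion that the spinor-genus index is always $1$ for definite lattices over $\F[t]$ is stated rather than proved, and the local spinor norm groups $\theta(O^{+}(L_v))$ depend on the lattice and need not exhaust $\mathfrak{o}_v^{\times} K_v^{\times 2}$ at ramified places, so this step also needs justification. And your explanation of where $\#\F > 3$ enters (``extra units and short vectors obstruct the splitting'') is suggestive but not an argument; you would need to exhibit precisely what fails at $q = 3$ and why it succeeds for $q \geq 5$.
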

\noindent
In particular -- excepting $\F = \F_3$ -- we have Euclidean implies ADC 
implies regular implies class number one -- so Conjecture \ref{MAINCONJ} 
holds for definite Euclidean forms over $\F[t]$.  Moreover, there are only finitely many definite quadratic forms over 
$\F[t]$ of any given class number, so this verifies Conjecture 
\ref{FINITENESSCONJ} for definite forms over $R$.  
\\ \\
We end with a few preliminary results towards the classification of Euclidean 
and ADC forms over $\F[t]$, mostly to showcase the connection to 
Theorem \ref{GENCPTHM}. 

\begin{thm} For a definite quaternary form $q_{/\F[t]}$, the following are equivalent: \\
(i) $q$ is ADC. \\
(ii) $q$ is universal. \\
(iii) The discriminant of $q$ has degree $2$.
\end{thm}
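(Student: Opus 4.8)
The plan is to dispatch the easy implications and reduce everything to the equivalence (ii)$\,\Leftrightarrow\,$(iii). The implication (ii)$\,\Rightarrow\,$(i) is Example 2.3. For (i)$\,\Rightarrow\,$(ii) I would invoke that $K=\F(t)$ has $u$-invariant $4$: given $d\in K^{\times}$, the five-variable form $q\perp\langle -d\rangle$ is isotropic over every completion $K_v$ (each of which has $u$-invariant $4$), hence isotropic over $K$ by Hasse--Minkowski; as $q$ is anisotropic, this forces $q$ to $K$-represent $d$. So every element of $R$ is $K$-represented by $q$, and therefore for $q$ the conditions ``ADC'' and ``universal'' coincide. (Alternatively, apply Proposition~\ref{SETUP5} and note that $\F(t)$ has no real places, so ``sign-universal'' collapses to ``universal''.)

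For (iii)$\,\Rightarrow\,$(ii) the strategy is to feed $q$ to the Generalized Cassels--Pfister Theorem~\ref{GENCPTHM}. The key input is a reduction-theoretic normal form: \emph{a definite quaternary quadratic $\F[t]$-lattice whose discriminant has degree $2$ is isometric over $\F[t]$ to a form $\sum_{i,j}a_{ij}(t)\,x_ix_j$ with $\deg a_{ij}\le 1$ for all $i,j$.} The non-Archimedean place $v_\infty=\deg(\cdot)$ supplies the reduction theory: a definite $\F[t]$-lattice $\Lambda$ of rank $n$ has a Minkowski-reduced basis $e_1,\dots,e_n$ in which the degrees $\deg q(e_i)$ are constrained by $\deg\disc(\Lambda)$, and a feature special to the definite quaternary case is that $\Lambda$ can contain no unimodular sublattice $M$ of rank $3$ --- such an $M$ would be isotropic over $K_v$ for every finite $v$ (a ternary form over a finite field is isotropic), hence, by reciprocity for ternary forms, isotropic over $K_\infty$ as well, contradicting definiteness. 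For $n=4$ and $\deg\disc=2$ these constraints pin the $\deg q(e_i)$ down to be $\le 1$, and after clearing the off-diagonal terms one obtains $\deg a_{ij}\le 1$. Since $q$ is anisotropic (being anisotropic already over $K_\infty$), Theorem~\ref{GENCPTHM}(i) then gives that $q$ is ADC, hence universal by the previous paragraph.

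For (ii)$\,\Rightarrow\,$(iii) suppose $q$ is universal. First, $\deg\disc(q)$ is even: the anisotropic quaternary form $q_{K_\infty}$ over the local field $K_\infty$ is a scaled quaternion norm form, so its discriminant is trivial in $K_\infty^{\times}/K_\infty^{\times 2}$, forcing $v_\infty(\disc q)=-\deg\disc(q)$ to be even. Second, $\deg\disc(q)\ne 0$: if the discriminant were a unit, the lattice would be unimodular at every finite place, so $q$ would be isotropic over $K_v$ for all finite $v$; together with the (now trivial) discriminant class this makes the Hasse invariant of $q$ trivial at every finite place, hence (Hilbert reciprocity) also at $\infty$, so $q$ would be isotropic over $K_\infty$, contradicting definiteness. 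Thus $\deg\disc(q)\in\{2,4,6,\dots\}$, and it remains only to exclude $\deg\disc(q)\ge 4$.

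Excluding $\deg\disc(q)\ge 4$ is the main obstacle, and I expect it to split into a local step and a class-number step. By Theorem~\ref{SETUP4} together with the localization/completion results (Corollary~\ref{ADCLOCCOR}), an ADC form over $R$ is regular and is $R_v$-universal at every finite $v$. Writing each $q_v$ in Jordan form, a short case analysis shows $q_v$ fails to be $R_v$-universal whenever its unimodular Jordan constituent has rank $0$ or $1$, which already eliminates the bulk of the forms with $\deg\disc\ge 4$. For the residual forms --- everywhere locally universal, yet with $\deg\disc(q)\ge 4$ --- I would use Bureau's theorem (above): regularity of $q$ would force class number one, but a definite quaternary $\F[t]$-form with $\deg\disc(q)\ge 4$ has class number $>1$, as one reads off from the Eichler mass formula over $\F(t)$ (the mass has order $(\#\F)^{\,\deg\disc(q)/2-1}>1$); so $q$ is not regular, hence not ADC and not universal --- the required contradiction. (For $\#\F=3$, where Bureau's theorem is unavailable, one instead finishes by hand, producing explicit non-ADC certificates for the finitely many shapes that survive the local analysis.) The two genuinely substantive points are thus the reduced normal form above and this mass/class-number input; everything else is bookkeeping.
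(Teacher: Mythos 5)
Your treatment of (i)$\iff$(ii) is correct and is essentially the paper's: the paper invokes Proposition \ref{SETUP5} and the fact that $\F(t)$ has no real places, which is your parenthetical alternative; your $u$-invariant/Hasse--Minkowski argument is a fine direct substitute. The real issue is (ii)$\iff$(iii). In the paper this equivalence is not proved at all but cited as Chan--Daniels \cite[Cor. 4.3]{Chan-Daniels}; you are attempting to reprove that external theorem, and as written the attempt has genuine gaps. For (iii)$\Rightarrow$(ii), everything hinges on the asserted normal form: that a definite quaternary lattice with $\deg\disc=2$ has a (reduced, in fact orthogonal) basis whose Gram entries all have degree at most $1$. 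This is true, but it requires the reduction/diagonalization theory of definite $\F[t]$-lattices (in particular that for a reduced basis the degrees $\deg q(e_i)$ add up to $\deg\disc$ with no drop, which uses definiteness at the non-Archimedean infinite place); you assert it rather than prove or cite it. It is telling that the paper itself only runs the Cassels--Pfister argument (Theorem \ref{GENCPTHM}) for \emph{diagonal} quaternary forms in the very next theorem, and falls back on Chan--Daniels for the general statement: the missing diagonalization/reduction input is exactly the nontrivial point.

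The direction (ii)$\Rightarrow$(iii) is where the proposal would not survive scrutiny as it stands. The parity of $\deg\disc$ and the exclusion of $\deg\disc=0$ are fine, but the elimination of $\deg\disc\geq 4$ rests on three unestablished steps: (a) the ``short case analysis'' of Jordan splittings showing which local forms fail to be $R_v$-universal is not carried out, and by your own account it does not eliminate all forms with $\deg\disc\geq 4$; (b) the crux is then the claim that any such genus has mass of order $(\#\F)^{\deg\disc/2-1}>1$, i.e., class number $>1$ --- this Eichler-type mass computation over $\F(t)$ is precisely the substantive arithmetic content and is asserted without proof (note also that universality gives regularity trivially, so the detour through Theorem \ref{SETUP4} and the localization corollaries is unnecessary, but does not help with the mass bound); and (c) Bureau's theorem is only available for $\#\F>3$, and the $\F_3$ case is deferred to ``explicit non-ADC certificates for the finitely many shapes that survive,'' which are neither exhibited nor shown to form a finite list. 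So your outline identifies the right ingredients, but as a proof of (ii)$\iff$(iii) it is a sketch with the two hardest steps (the reduction normal form and the mass/class-number bound) left open; the honest alternatives are to cite Chan--Daniels, as the paper does, or to actually supply that reduction theory and mass computation.
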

\begin{proof}
(i) $\iff$ (ii) is a case of Proposition \ref{SETUP5}. \\
(ii) $\iff$ (iii): this is a result of W.K. Chan and J. Daniels \cite[Cor. 4.3]{Chan-Daniels}.
\end{proof}

\begin{thm}
For a \emph{diagonal} definite quaternary form $q$ over $\F[t]$, the following are equivalent: \\
(i) $q$ is Euclidean.  \\
(ii) $q$ is universal. \\
(iii) The discriminant of $q$ has degree $2$.
\end{thm}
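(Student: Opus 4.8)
The plan is to prove the cycle (iii) $\Rightarrow$ (ii) $\Rightarrow$ (i) $\Rightarrow$ (iii), using the previous theorem to supply the equivalence of (ii) and (iii) and the implication (i) $\Rightarrow$ (ii) for free: indeed, that theorem already gives (ii) $\iff$ (iii) for any definite quaternary form, and a Euclidean form is ADC by Theorem \ref{MAINTHM}, hence universal by Proposition \ref{SETUP5}. So the only genuine content is the single implication (iii) $\Rightarrow$ (i), i.e. a diagonal definite quaternary form $q = a_1 x_1^2 + a_2 x_2^2 + a_3 x_3^2 + a_4 x_4^2$ with $\sum_i \deg(a_i) = 2$ is Euclidean for the canonical norm $|\cdot|_{\#\F}$ on $\F[t]$.

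First I would reduce to a normalized shape. Since Euclideanness depends only on the equivalence class of the norm and on the $R$-isomorphism class up to scaling that preserves the coefficient degree pattern, and since any $a_i \in \F[t]^{\bullet}$ can be written $a_i = c_i \pi_i$ with $c_i \in \F^\times$ a unit, I may absorb leading units. The constraint $\sum \deg(a_i) = 2$ with each $\deg(a_i) \in \{0,1,2\}$ leaves two cases: either one coefficient has degree $2$ and the rest degree $0$, or exactly two coefficients have degree $1$ and the other two degree $0$. (A degree-$2$ coefficient factors over $\F$ either as a product of two linear forms or as an irreducible quadratic; I would handle the irreducible case by noting the form is still anisotropic over $K_\infty$ and the argument below only uses degrees, not factorizations.)

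The core step is a polynomial-division estimate modeled exactly on the proof of Theorem \ref{GENCPTHM}. Given $x = (f_1/g_1, \ldots, f_4/g_4) \in K^4 \setminus R^4$, write $f_i/g_i = y_i + r_i/g_i$ with $y_i, r_i \in \F[t]$ and $\deg r_i < \deg g_i$, so that $|r_i/g_i|_{\#\F} \le (\#\F)^{-1} < 1$; set $z_i = r_i/g_i$. Using the non-Archimedean property,
\[
|q(x-y)| = \left| \sum_{i=1}^4 a_i z_i^2 \right| \le \max_i |a_i|\,|z_i|^2.
\]
The obstacle Theorem \ref{GENCPTHM} sidesteps — that $\max_i |a_i|$ can exceed $1$ — is exactly what must be confronted here, and this is where the hypothesis $\sum \deg(a_i) = 2$ does the work: if some $\deg(a_i) \ge 1$, then for that index the \emph{denominator} $g_i$ must have large degree, and I would exploit the inhomogeneity of the degree bound. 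Precisely, for each $i$ with $\deg(a_i) = d_i \ge 1$ I would argue that either $x_i \in R$ already (so $z_i = 0$ and that term vanishes), or $\deg g_i \ge 1$ so $|z_i|^2 \le (\#\F)^{-2}$, and since $|a_i| = (\#\F)^{d_i}$ with $d_i \le 2$, the product $|a_i|\,|z_i|^2 \le (\#\F)^{d_i - 2 \cdot(\text{number of nontrivial }z_i\text{ among the degree coords})}$; the budget $\sum d_i = 2$ forces this to be $< 1$ provided at least one coordinate truly lies outside $R$. The bookkeeping case analysis — comparing which coordinates are non-integral against which coefficients carry degree — is the one place real care is needed, but in every configuration the weighted count comes out strictly negative in the exponent, giving $|q(x-y)| < 1$. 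The hard part will be making this weighted-exponent accounting airtight in the sub-case where the only non-integral coordinate is one of the degree-$0$ coordinates while a degree-$2$ (or two degree-$1$) coefficient sits on an integral coordinate: there one simply takes $y$ equal to the obvious rounding on the other coordinates and observes that the single surviving term $a_i z_i^2$ has $|a_i| = 1$ and $|z_i| < 1$, closing the estimate.
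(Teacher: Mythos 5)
Your reduction of (i) $\Rightarrow$ (ii) and (ii) $\Leftrightarrow$ (iii) to Theorem \ref{MAINTHM}, Proposition \ref{SETUP5} and the preceding theorem is exactly right, and your treatment of the degree pattern $(0,0,1,1)$ is the Cassels--Pfister estimate the paper uses. The gap is in the other pattern, $(0,0,0,2)$. Your ``weighted-exponent accounting'' cannot be made airtight there: for the coordinate carrying the degree-$2$ coefficient, if $x_i \notin R$ the best the division algorithm gives is $\deg g_i \geq 1$, hence $|a_i|\,|z_i|^2 \leq (\#\F)^{2-2} = 1$, which is not $<1$; the extra factor you insert in the exponent (``number of nontrivial $z_i$ among the degree coords'') has no bearing on the bound for that single term, and by the ultrametric equality the value of $|q(x-y)|$ can be exactly $1$. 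In fact the claim you are trying to prove in this case is false: for $q = x_1^2+x_2^2+x_3^2+t^2x_4^2$ and $x = (0,0,0,t^{-1})$ one has $q(x-y) = y_1^2+y_2^2+y_3^2+(1-ty_4)^2 \in R$ for every $y \in R^4$, so no $y$ satisfies $0 < |q(x-y)| < 1$ and the form is not Euclidean, even though its discriminant has degree $2$.

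What saves the theorem -- and what the paper's proof does -- is that the pattern $(0,0,0,2)$ simply cannot occur for a \emph{definite} form: three coefficients of degree $0$ give a ternary subform with constant coefficients, and every nondegenerate ternary form over the finite field $\F$ is isotropic, so $q$ would be isotropic over $K$ and a fortiori over $K_\infty$, contradicting definiteness. (This is where the definiteness hypothesis, which your estimate never uses, enters; your parenthetical about whether the degree-$2$ coefficient is irreducible is a red herring.) Once this case is excluded, only $\deg(p_1)=\deg(p_2)=0$, $\deg(p_3)=\deg(p_4)=1$ remains, and Euclideanness follows from the estimate in the proof of Theorem \ref{GENCPTHM}, exactly as in your correct sub-case. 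So your proposal needs this isotropy observation to replace the purported direct estimate in the degree-$2$ case.
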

\begin{proof} \textbf{} \\ \noindent
(i) $\implies$ (ii) follows from Theorem \ref{MAINTHM} and Proposition \ref{SETUP5}. \\
(ii) $\implies$ (iii) is immediate from the previous result.  \\
(iii) $\implies$ (i): Suppose \[q = p_1 x_1^2 + p_2 x_2^2 + p_3 x_3^2 + p_4 x_4^2\] Without loss of generality, we may assume that 
$\deg(p_1) \leq \deg(p_2) \leq \deg(p_3) \leq deg(p_4)$.  If $\deg(p_3) = 0$, 
then $q$ contains a $3$-dimensional constant subform and is thus isotropic.  
Since $\sum_i \deg(p_i) = 2$, the only other possibility is $\deg(p_1) = \deg(p_2) = 0$, $\deg(p_3) = \deg(p_4) = 1$, and now the fact that $q$ is Euclidean follows from the Generalized Cassels-Pfister Theorem.
\end{proof}

\begin{thm}
If $q$ is a \emph{diagonal} definite ternary form over $\F[t]$ with 
$\deg(\Delta(q)) \leq 2$, then $q$ is ADC.
\end{thm}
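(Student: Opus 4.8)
The plan is to reduce everything to the Generalized Cassels-Pfister Theorem (Theorem~\ref{GENCPTHM}). Write $q = p_1 x_1^2 + p_2 x_2^2 + p_3 x_3^2$ with $p_i \in \F[t]^{\bullet}$. First I would observe that for a diagonal lattice it is immediate from the definition that the discriminant ideal is $\Delta(q) = (p_1 p_2 p_3)$, so the hypothesis $\deg \Delta(q) \le 2$ says precisely that $\deg p_1 + \deg p_2 + \deg p_3 \le 2$. The goal is then to show that, together with definiteness, this forces each $p_i$ to have degree $0$ or $1$. Granting that: a definite form is anisotropic over $K$, so $q$ satisfies hypothesis (i) of Theorem~\ref{GENCPTHM} (anisotropic, all coefficients of degree at most $1$), which gives at once that $q$ is an ADC form.

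So the real content is to rule out a coefficient of degree $\ge 2$. Given the degree bound, the only offending possibility, up to permuting the variables, is the degree sequence $(\deg p_1, \deg p_2, \deg p_3) = (0,0,2)$, and it suffices to show such a $q$ cannot be definite. The key point is that a nonzero polynomial $p$ lies in a \emph{unit} square class of $K_\infty^{\times}$ exactly when $v_\infty(p) = -\deg p$ is even, i.e. when $\deg p$ is even. Hence for the sequence $(0,0,2)$ all three $p_i$ lie in unit square classes of $K_\infty^{\times}$, so over $K_\infty$ the form $q$ becomes isomorphic to a diagonal form $b_1 y_1^2 + b_2 y_2^2 + b_3 y_3^2$ with each $b_i$ a unit of the valuation ring of $K_\infty$. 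Reducing modulo the maximal ideal yields a nondegenerate ternary quadratic form over the finite field $\F$, which is isotropic; since the characteristic is odd, a nonzero zero of this reduced form is a nonsingular point and lifts by Hensel's Lemma. Thus $q$ would be isotropic over $K_\infty$, contradicting definiteness. Therefore no $p_i$ has degree $\ge 2$, hence each $\deg p_i \in \{0,1\}$, and the reduction of the first paragraph finishes the proof. (The same argument incidentally rules out the all-constant sequence $(0,0,0)$ for definite $q$, so the admissible degree sequences are exactly $(0,0,1)$ and $(0,1,1)$; this is consistent with, but not needed beyond, the appeal to Theorem~\ref{GENCPTHM}.)

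The step I expect to be the main obstacle is precisely this local analysis at the infinite place: correctly matching the square class of each $p_i$ in $K_\infty^{\times}$ to the parity of $\deg p_i$, and then deducing isotropy of a ternary form with unit coefficients over $K_\infty$ from the classical isotropy of ternary forms over finite fields together with Hensel's Lemma. Once that is in hand, the remaining steps -- identifying $\deg \Delta(q)$ with $\deg(p_1 p_2 p_3)$ and invoking the Generalized Cassels-Pfister Theorem -- are routine bookkeeping.
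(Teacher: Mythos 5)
Your proof is correct, but it takes a genuinely different route from the paper's. You eliminate the only problematic degree sequence $(0,0,2)$ (and, in passing, $(0,0,0)$) by a local computation at the infinite place: all three coefficients then lie in unit square classes of $K_\infty^{\times}$, so over the valuation ring of $K_\infty$ the form becomes unimodular ternary, its reduction is isotropic over $\F$, and the zero lifts by Hensel since the residue characteristic is odd -- contradicting definiteness. This forces every coefficient to have degree at most $1$, and Theorem \ref{GENCPTHM}(i) applies directly (anisotropy over $K$ follows from anisotropy over $K_\infty$); indeed the proof of that theorem shows such forms are Euclidean, so you get slightly more than the stated ADC conclusion. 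The paper argues differently: it invokes the Chan--Daniels theorem \cite{Chan-Daniels} that definite ternary forms with $\deg \Delta(q) \le 2$ have class number one, hence are regular, and then uses Theorem \ref{SETUP4} to reduce to checking that $q$ is locally ADC; this is done case by case, via maximality of the lattice (Lemma \ref{EASYMAXLEMMA} and Corollary \ref{LOCALLYMAXCOR}) when $\Delta(q)$ is squarefree, via Theorem \ref{GENCPTHM} when the largest coefficient has degree $1$, and by a contradiction with definiteness only in the subcase $p_3 = a(t-b)^2$. What each approach buys: yours is self-contained within the paper's own machinery (no appeal to class number one, regularity, or genus theory), yields the stronger Euclidean conclusion, and shows that the paper's subcase of squarefree $p_3$ of degree $2$ is actually vacuous for definite forms; the paper's argument, on the other hand, is the template that extends to non-diagonal forms and larger discriminants, where the coefficients cannot be forced into the Cassels--Pfister range and the regular-plus-locally-ADC criterion is the tool that remains available.
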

\begin{proof}
By \cite[Thm. 3.5]{Chan-Daniels} any definite ternary form over $\F[t]$ 
with $\deg(\Delta(q)) \leq 2$ has class number one, hence is regular.  Therefore, by Theorem \ref{SETUP4} it is sufficient to show that $q$ is 
locally ADC. \\ \indent
If $\deg(\Delta(q)) \leq 1$, then since $R$ is nondyadic, the corresponding 
lattice is maximal, hence locally ADC by Theorem \ref{SETUP4} and Corollary \ref{LOCALLYMAXCOR}. \\ \indent
Suppose $\deg(\Delta(q)) = 2$ and write $q = p_1(t)x_1^2 + p_2(t) x_2^2 + 
p_3(t) x_3^2$ with $\deg(p_1) \leq \deg(p_2) \leq \deg(p_3)$.  If $\deg(p_3) = 1$, then by the Generalized Cassels-Pfister Theorem $q$ is Euclidean.  Otherwise 
$\deg(p_1) = \deg(p_2)= 0$ and $\deg(p_3) = 2$.  If $p_3$ is squarefree then 
so is $\Delta(q)$, hence $q$ is maximal and thus locally ADC.  Otherwise 
there exist $a \in \F^{\times}, \ b \in \F$ such that $p_3 = a(t-b)^2$, but 
then $q$ is equivalent over $K$ to the constant form $p_1 x_1^2 +p_2 x_2^2 + a x_3^2$ and is therefore isotropic, a contradiction.
\end{proof}
\noindent
Again, a complete classification -- over any fixed finite field $\F$ -- 
is reduced to a finite calculation.  We hope to give precise classification 
theorems in a future work.


\begin{thebibliography}{BCC09}
%
\bibitem[1]{Aubry} L. Aubry, Sphinx-$\OE$dipe 7 (1912), 81-84.

\bibitem[2]{Bhargava-Hanke} M. Bhargava and J.P. Hanke, \emph{Universal 
quadratic forms and the 290-theorem}, to appear in Invent. Math.

\bibitem[3]{BSD} E.S. Barnes and H.P.F. Swinnerton-Dyer, \emph{The inhomogeneous minima of binary quadratic forms. I.}
Acta Math. 87 (1952), 259--323. 

\bibitem[4]{Bourbaki} N. Bourbaki, \emph{Commutative algebra. Chapters 1–7.} 
Translated from the French. Reprint of the 1989 English translation. Elements of Mathematics (Berlin). Springer-Verlag, Berlin, 1998.

\bibitem[5]{Bureau} J. Bureau, \emph{Definite $\F[t]$-Lattices with 
Class Number One}, preprint, 2007.

\bibitem[6]{Butts-Wade} H.S. Butts and L.I. Wade, \emph{Two criteria for Dedekind domains}.
Amer. Math. Monthly 73 (1966) 14-–21. 


\bibitem[7]{ChowlaBriggs} S. Chowla and W. Briggs, \emph{On discriminants of binary quadratic forms with a single class in each genus}.  Can. J. Math. 6 (1954), 463-470.

\bibitem[8]{Cassels} J.W.S. Cassels, \emph{On the representation of rational functions as sums of squares}. 
Acta Arith. 9 (1964), 79--82. 

\bibitem[9]{Chan-Daniels} W.K. Chan and J. Daniels, \emph{Definite regular quadratic forms over ${\Bbb F}_q[T]$}. Proc. Amer. Math. Soc. 133 (2005), no. 11, 3121-–313.

\bibitem[10]{Chew-Lawn} K.L. Chew and S. Lawn, \emph{Residually finite rings}.
Canad. J. Math. 22 (1970), 92-–101. 

\bibitem[11]{Clark-Factorization} P.L. Clark, \emph{Factorization in integral 
domains}, preprint.

\bibitem[12]{Clark-Jagy} P.L. Clark and W.C. Jagy, \emph{Enumeration of 
definite Euclidean forms and ADC-forms}, in preparation.  

\bibitem[13]{Eichler} M. Eichler, \emph{Martin Quadratische Formen und orthogonale Gruppen}. Die Grundlehren der mathematischen Wissenschaften in Einzeldarstellungen mit besonderer Ber\"ucksichtigung der Anwendungsgebiete. Band LXIII. Springer-Verlag, 1952.

\bibitem[14]{Gerstein} L.J. Gerstein, \emph{Basic quadratic forms}. 
Graduate Studies in Mathematics, 90. American Mathematical Society, Providence, RI, 2008. 

\bibitem[15]{Jagy-Kaplansky} W.C. Jagy and I. Kaplansky, \emph{Positive 
definite binary quadratic forms that represent the same primes}, preprint.  

\bibitem[16]{JKS} W.C. Jagy, I. Kaplansky and A. Schiemann, \emph{There are 913 regular ternary forms}. Mathematika 44 (1997), no. 2, 332-–341.

\bibitem[17]{Lam} T.Y. Lam, \emph{Introduction to quadratic forms over fields}. Graduate Studies in Mathematics, 67. American Mathematical Society, Providence, RI, 2005.

\bibitem[18]{Larsen-McCarthy} M.D. Larsen and P.J. McCarthy, \emph{Multiplicative theory of ideals}. Pure and Applied Mathematics, Vol. 43. Academic Press, New York-London, 1971.

\bibitem[19]{Lenstra} H.W. Lenstra, Jr., \emph{Euclidean ideal classes}. Journ\'ees Arithm\'etiques de Luminy (Colloq. Internat. CNRS, Centre Univ. Luminy, Luminy, 1978), pp. 121--131, 

\bibitem[20]{Levitz-Mott} K.B. Levitz and J.L. Mott, \emph{Rings with finite norm property}. Canad. J. Math. 24 (1972), 557–-565. 

\bibitem[21]{Nebe} G. Nebe, \emph{Even lattices with covering radius $<\sqrt 2$}. Beitr\"age Algebra Geom. 44 (2003), no. 1, 229-–234. 

\bibitem[22]{OMeara} O.T. O'Meara, \emph{Introduction to quadratic forms}. Reprint of the 1973 edition. Classics in Mathematics. Springer-Verlag, Berlin, 2000.

\bibitem[23]{Pfister} A. Pfister, \emph{Multiplikative quadratische Formen}. Arch. Math. (Basel) 16 (1965), 363--370. 

\bibitem[24]{Pfister95} A. Pfister, \emph{Quadratic forms with applications to algebraic geometry and topology}. London Math. Society Lecture Note Series, 217. Cambridge University Press, Cambridge, 1995.

\bibitem[25]{Serre} J.-P. Serre, \emph{A course in arithmetic}. Translated from the French. Graduate Texts in Mathematics, No. 7. Springer-Verlag, New York-Heidelberg, 1973.

\bibitem[26]{SerreMO} J.-.P. Serre communicated to B. Poonen communicated to MathOverflow.net: {\tt mathoverflow.net/questions/3269}

\bibitem[27]{Shimura} G. Shimura, \emph{Arithmetic of quadratic forms}. Springer Monographs in Mathematics. Springer, New York, 2010.

\bibitem[28]{Voight} J. Voight, \emph{Quadratic forms that represent almost the same primes}.  Math. Comp. 76 (2007), 1589–-1617. 

\bibitem[29]{Weil} A. Weil, \emph{Number theory. An approach through history from Hammurapi to Legendre}. Reprint of the 1984 edition. Modern Birkh\"auser Classics, Boston, MA, 2007.




\end{thebibliography}
\end{document}